\newtheorem{theorem}{Theorem}[section]
\newtheorem{lemma}{Lemma}[section]
\newtheorem{remark}{Remark}[section]
\numberwithin{equation}{section}
\newenvironment{proof}{\medskip\par\noindent{\bf Proof.}\ }{\qquad
\raisebox{-0.5mm}{\rule{1.5mm}{4mm}}\vspace{6pt}}
\newcommand{\bbr}{\mathbb{R}}
\newcommand{\bbrn}{\mathbb{R}^N}
\newcommand{\h}{H^2(\bbrn)}
\newcommand{\bbn}{\mathbb{N}}
\newcommand{\ve}{\varepsilon}
\begin{document}
\title
{\Large\bf On a biharmonic equations with steep potential well and indefinite potential}%

\author{
Yisheng Huang$^{a},$\thanks{E-mail address: yishengh@suda.edu.cn(Yisheng Huang)}\quad
Zeng Liu$^{b},$\thanks{E-mail address: luckliuz@163.com(Zeng Liu)}\quad
Yuanze Wu$^{c}$\thanks{Corresponding
author. E-mail address: wuyz850306@cumt.edu.cn (Yuanze Wu).}\\%
\footnotesize$^{a}${\em  Department of Mathematics, Soochow University, Suzhou 215006, P.R. China }\\%
\footnotesize$^{b}${\em  Department of Mathematics, Suzhou University of Science and Technology, Suzhou 215009, P.R. China}\\
\footnotesize$^{c}${\em  College of Sciences, China University of Mining and Technology, Xuzhou 221116, P.R. China }}%
\date{}
\maketitle

\noindent{\bf Abstract:} In this paper, we study the following biharmonic equations:%
$$
\left\{\aligned&\Delta^2u-a_0\Delta u+(\lambda b(x)+b_0)u=f(u)&\text{ in }\bbr^N,\\%
&u\in\h,\endaligned\right.\eqno{(\mathcal{P}_{\lambda})}%
$$
where $N\geq3$, $a_0,b_0\in\bbr$ are two constants, $\lambda>0$ is a parameter, $b(x)\geq0$ is a potential well and $f(t)\in C(\bbr)$ is subcritical and superlinear or asymptotically linear at infinity.  By the Gagliardo-Nirenberg inequality, we make some observations on the operator $\Delta^2-a_0\Delta+\lambda b(x)+b_0$ in $\h$.  Based on these observations, we give a new variational setting to $(\mathcal{P}_{\lambda})$ for $a_0<0$.  With this new variational setting in hands, we establish some new existence results of the nontrivial solutions to $(\mathcal{P}_{\lambda})$ for all $a_0, b_0\in\bbr$ with $\lambda$ sufficiently large by the variational method.  The concentration behavior of the nontrivial solutions as $\lambda\to+\infty$ is also obtained.  It is worth to point out that it seems to be the first time that the nontrivial solution of $(\mathcal{P}_{\lambda})$ is obtained in the case of $a_0<0$.%

\vspace{6mm} \noindent{\bf Keywords:} Variational method; Biharmonic equation; Potential well; Indefinite problem.%

\vspace{6mm}\noindent {\bf AMS} Subject Classification 2010: 35B38; 35B40; 35J10; 35J20.%

\section{Introduction}
In this paper, we study the following biharmonic equations:%
\begin{equation}\label{eq1000}
\left\{\aligned&\Delta^2u-a_0\Delta u+V(x)u=f(x,u)&\text{ in }\bbr^N,\\%
&u\in\h,\endaligned\right.%
\end{equation}
where $N\geq3$, $a_0\in\bbr$ is a constant and $\lambda>0$ is a parameter.  $V(x)$ and $f(x,u)$ satisfy some conditions to be specified later.%

The biharmonic equations in a bounded domain are generally regarded as a mathematical modeling, which can describe some phenomena appeared in physics, engineering and other sciences.  For example, the problem of nonlinear oscillation in a suspension bridge \cite{LM90,MW90} and the problem of the static deflection of an elastic plate in a fluid \cite{AD02}.  Due to such applications, the existence and multiplicity of nontrivial solutions for the biharmonic
equations in a bounded domain have been extensively studied in the past two decades, We refer the readers to \cite{H14,HW14,LH10,RTZ09,ZL05} and the references therein.  Most of the literatures were devoted to the following Dirichlet--Navier type boundary value problem:%
$$
\left\{\aligned&\Delta^2u-\alpha\Delta u=g(x,u)&\quad\text{in }\Omega,\\%
&u=\Delta u=0&\quad\text{on }\partial\Omega,\endaligned\right.\eqno{(\mathcal{P}_{\alpha})}%
$$
where $\Omega\subset\bbrn$ is bounded domain with smooth boundary, $\alpha>-\mu_1$ is a parameter and $\mu_1$ is the first eigenvalue of $-\Delta$ in $L^2(\Omega)$.  In particular, the existence of sign-changing solutions to $(\mathcal{P}_{\alpha})$ was obtained in \cite{LH10,RTZ09,ZW08} when $g(x,u)$ is subcritical and superlinear or asymptotically linear at infinity.%

In recent years, the study on Problem~\eqref{eq1000}, i.e. the biharmonic equations in the whole space $\bbrn$, has begun to attract much attention.
We refer the readers to \cite{CDM14,DDWW14,DS15,GG06,GHZ15,WS09,WZ12,YT13} and the references therein.
In these literatures, various existence results of the nontrivial solutions to Problem~\eqref{eq1000} were established by the variational method in the case of $a_0\geq0$.
Indeed, in the case of $a_0\geq0$, under some suitable conditions on $V(x)$ and $f(x,u)$, one can give a variational setting to Problem~\eqref{eq1000},
as the harmonic equations in the whole space, $\bbrn$ in the following Hilbert space%
\begin{equation*}
\mathcal{V}=\{u\in\h\mid \int_{\bbr^N}V^+(x)u^2dx<+\infty\},
\end{equation*}
where $V^+(x)=\max\{V(x),0\}$, the inner product and the corresponding norm are respectively given by%
\begin{equation*}
\langle u,v\rangle_{\mathcal{V}}=\int_{\bbr^N}(\Delta u\Delta v+a_0\nabla u\nabla v+V^+(x)uv)dx\quad\text{and}\quad\|u\|=\langle u,u\rangle_{\mathcal{V}}^{\frac12}.%
\end{equation*}
Thus, the variational method can be used to find the nontrivial solutions of Problem~\eqref{eq1000}, see for example \cite{CDM14,DS15,WS09,WZ12,YT13} and the references therein.

If $a_0<0$ then $\mathcal{V}$ with $\langle u,v\rangle_{\mathcal{V}}$ may not be a Hilbert space,
since the bilinear operator $\langle u,v\rangle_{\mathcal{V}}$ may not be an inner product in $\mathcal{V}$ for $V(x)\neq 0$ in general.  This is quite different from the situation of $V(x)=0$. Indeed, for example, if we consider the problem $(\mathcal{P}_\alpha)$ in a bounded $\Omega\subset\bbrn$, then $\alpha$ can take negative value since  the operator $\Delta^2-\alpha\Delta$ is compact in $L^2(\Omega)$ and the spectrum of $\Delta^2-\alpha\Delta$ in $L^2(\Omega)$ are the eigenvalues  $\{\mu_k^2+\alpha\mu_k\}$ with the first eigenvalue $\mu_1^2+\alpha\mu_1$, where $\{\mu_k\}$ are the eigenvalues of $-\Delta$ in $L^2(\Omega)$ with the first eigenvalue $\mu_1>0$, so that, if $\alpha>-\mu_1$ then $H_0^1(\Omega)\cap H^2(\Omega)$ is also a Hilbert space with the following inner product%
\begin{equation*}
\langle u,v\rangle_\alpha=\int_{\Omega}(\Delta u\Delta v+\alpha\nabla u\nabla v)dx,%
\end{equation*}
and then one can study $(\mathcal{P}_\alpha)$ by the variational method under some suitable conditions on $g(x,u)$ in the case of $\alpha>-\mu_1$.  However, when $V(x)\neq 0$, the operator $\Delta^2-a_0\Delta+V(x)$ in $\mathcal{V}$ is much more complex due to the potential $V(x)$ and the spectrum of the operator $\Delta^2-a_0\Delta+V(x)$ in $\mathcal{V}$ is not clear in the case of $a_0<0$, also
the variational setting of \eqref{eq1000} is not clear in the case of $a_0<0$.  Due to these reasons, to our best knowledge,
there is few study on Porblem~\eqref{eq1000} for the case of $a_0<0$.  Therefore, a natural question is that does Problem~\eqref{eq1000} have a nontrivial solution for some $a_0<0$ and $V(x)\neq 0$?  The purpose of this paper is to explore this question.%

We assume $V(x)=\lambda b(x)+b_0$, where $b_0\in\bbr$ is a constant, $\lambda>0$ is a parameter and $b(x)$ satisfies the following conditions:%
\begin{enumerate}
\item[$(B_1)$] $b(x)\in C(\bbr^N)$ and $b(x)\geq0$ on $\bbr^N$.%
\item[$(B_2)$] There exists $b_\infty>0$ such that $|\mathcal{B}_\infty|<+\infty$, where $\mathcal{B}_\infty=\{x\in\bbr^N\mid b(x)<b_\infty\}$ and $|\mathcal{B}_\infty|$ is the Lebesgue measure of the set $\mathcal{B}_\infty$.%
\item[$(B_3)$] $\Omega=\text{int} b^{-1}(0)$ is a bounded domain having the smooth boundary $\partial\Omega$ and $\overline{\Omega}=b^{-1}(0)$.%
\end{enumerate}
$\lambda b(x)$ is called as the steep potential well for $\lambda$ sufficiently large under the conditions $(B_1)$--$(B_3)$ and the depth of the well is controlled by the parameter $\lambda$.  Such potentials were first introduced by Bartsch and Wang in \cite{BW95} for the scalar Schr\"odinger equations.  An interesting phenomenon for this kind of Schr\"odinger equations is that, one can expect to find the solutions which are concentrated at the bottom of the wells as the depth goes to infinity.  Due to this interesting property, such topic for the scalar Schr\"odinger equations was studied extensively in the past decade.  We refer the readers to \cite{AFS09,BT13,DT03,DS07,LHL11,ST09,WZ09} and the references therein.  Recently, the steep potential well was also considered for some other elliptic equations and systems, see for example \cite{FSX10,GT121,JZ11,LCW12,SW14,WZ12,YT13,YT14,ZLZ13} and the references therein.  In particular, the steep potential well was introduced to the biharmonic equations in \cite{LCW12} and was further studied in \cite{WZ12,YT13} in the case of $a_0\geq0$.  For the nonlinearity, we assume that $f(x,t)=f(t)$ and satisfies the following conditions:
\begin{enumerate}
\item[$(F_1)$] $\lim_{t\to0}\frac{f(t)}{t}=l_0\geq0$.%
\item[$(F_2)$] There exists $2\leq p<2^*$ such that $\lim_{t\to\infty}\frac{f(t)}{|t|^{p-2}t}=l_\infty>0$, where $2^*=\frac{2N}{N-2}$.%
\item[$(F_3)$] $\frac{f(t)}{|t|}$ is nondecreasing on $\bbr\backslash\{0\}$.%
\item[$(F_4)$] There exists $l_*\in(0, l_\infty]$ such that $f(t)t-2F(t)\geq l_*|t|^{p}$ and $F(t)\geq0$ for all $t\in\bbr$, where $F(t)=\int_0^t f(s)ds$.%
\end{enumerate}
Now, under the conditions $(B_1)$--$(B_3)$ and $(F_1)$--$(F_4)$, we mainly study the following problem in this paper:
$$
\left\{\aligned&\Delta^2u-a_0\Delta u+(\lambda b(x)+b_0)u=f(u)&\text{ in }\bbr^N,\\%
&u\in\h,\endaligned\right.\eqno{(\mathcal{P}_{\lambda})}%
$$

In order to establish a variational framework of $(\mathcal{P}_{\lambda})$ in the case of $a_0<0$, we need study the spectrum and Morse index of the operator $\Delta^2-a_0\Delta+(\lambda b(x)+b_0)$ in a suitable Hilbert space under the conditions $(B_1)$--$(B_3)$.
We will borrow some ideas of \cite{DS07} (see also \cite{ZLZ13}) to carry on this study.  Note that in the case of $a_0<0$, the negative part of the operator $\Delta^2-a_0\Delta+(\lambda b(x)+b_0)$ is generated by not only $(\lambda b(x)+b_0)^-$ but also $-a_0\Delta$, where $(\lambda b(x)+b_0)^-=\max\{-(\lambda b(x)+b_0), 0\}$.  Therefore, some new ideas and modifications are needed in establishing a variational framework of $(\mathcal{P}_{\lambda})$ in the case of $a_0<0$.

Before we state our results, we need to introduce some notations.  Let $\Omega$ be given in the condition $(B_3)$ and let $\{\mu_k\}$ be the eigenvalues of $-\Delta$ in $L^2(\Omega)$, then it is well known that $0<\mu_1<\mu_2\leq\mu_3\leq\cdots\leq\mu_k<\cdots$ with $\mu_k\to+\infty$ as $k\to\infty$ and $\phi_k$ are orthogonal in $L^2(\Omega)\cap H_0^1(\Omega)$ and span$\{\phi_k\}=H_0^1(\Omega)$, where $\phi_k$ are the eigenfunctions of $\mu_k$.  Since $\partial\Omega$ is smooth due to the condition $(B_3)$, it is also well known that $\{\phi_k\}\subset C_0^\infty(\overline{\Omega})$.  Let $H$ be the Hilbert space $H^2(\Omega)\cap H_0^1(\Omega)$ equipped with the inner product
\begin{equation*}
\langle u,v\rangle_{H}=\int_{\Omega}(\Delta u\Delta v+\max\{a_0,0\}\nabla u\nabla v+\max\{b_0,0\}uv)dx.
\end{equation*}
Then span$\{\phi_k\}=H$ and $\phi_k$ are orthogonal in $H$.  We re-denote $\{\mu_k\}$ by $\{\overline{\mu}_n\}$ such that $\overline{\mu}_j<\overline{\mu}_{j+1}$ for all $j\in\bbn$.  Clearly, $\mu_1=\overline{\mu}_1$.  In the case of $\min\{a_0,b_0\}<0$, we denote
\begin{equation}\label{eq9998}
k_0^*=\inf\bigg\{k\in\bbn: \frac{\overline{\mu}_k^2+\max\{a_0, 0\}\overline{\mu}_k+\max\{b_0,0\}}{\max\{-a_0, 0\}\overline{\mu}_k+\max\{-b_0,0\}}>1\bigg\}
\end{equation}
and $\overline{\mu}_0=0$.  Then the main results obtained in this paper can be stated as follows.
\begin{theorem}\label{thm0001}
(The superlinear case) Suppose that the conditions $(B_1)$--$(B_3)$, $(F_1)$--$(F_2)$ and $(F_4)$ hold with $p>2$.  If either $\min\{a_0,b_0\}\geq0$ or $\min\{a_0,b_0\}<0$ with
\begin{equation*}
\frac{\overline{\mu}_{k_0^*-1}^2+\max\{a_0, 0\}\overline{\mu}_{k_0^*-1}+\max\{b_0,0\}}{\max\{-a_0, 0\}\overline{\mu}_{k_0^*-1}+\max\{-b_0,0\}}<1
\end{equation*}
then there exist positive constants $\overline{l}_0$ and $\widehat{\Lambda}$ such that Problem~$(\mathcal{P}_\lambda)$ has a nontrivial solution for $\lambda>\widehat{\Lambda}$ in the case of $l_0<\overline{l}_0$.
\end{theorem}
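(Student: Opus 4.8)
The plan is to treat $(\mathcal{P}_\lambda)$ variationally, seeking critical points of
$$J_\lambda(u)=\frac12\int_{\bbrn}\big(|\Delta u|^2+a_0|\nabla u|^2+(\lambda b(x)+b_0)u^2\big)\,dx-\int_{\bbrn}F(u)\,dx$$
on the Hilbert space $E=\{u\in\h:\int_{\bbrn}b(x)u^2\,dx<+\infty\}$, equipped for each $\lambda$ with the positive-definite inner product built from the positive parts $a_0^+=\max\{a_0,0\}$, $b_0^+=\max\{b_0,0\}$, namely $\langle u,v\rangle_\lambda=\int_{\bbrn}(\Delta u\Delta v+a_0^+\nabla u\nabla v+(\lambda b(x)+b_0^+)uv)\,dx$. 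Writing $a_0^-=\max\{-a_0,0\}$ and $b_0^-=\max\{-b_0,0\}$, the quadratic part of $J_\lambda$ reads $Q_\lambda(u)=\|u\|_\lambda^2-\int_{\bbrn}(a_0^-|\nabla u|^2+b_0^-u^2)\,dx$, and the Gagliardo--Nirenberg estimate $\int_{\bbrn}|\nabla u|^2\le\varepsilon\int_{\bbrn}|\Delta u|^2+C_\varepsilon\int_{\bbrn}u^2$ shows the subtracted form to be a lower-order, relatively compact perturbation of $\|\cdot\|_\lambda^2$. Hence the associated self-adjoint operator has only finitely many nonpositive eigenvalues and $E$ splits as $E=E_\lambda^-\oplus E_\lambda^0\oplus E_\lambda^+$.

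First I would pin down this splitting as $\lambda\to+\infty$. Under $(B_1)$--$(B_3)$ the quadratic form localizes onto $\Omega$ and its spectrum converges to the Dirichlet--Navier eigenvalues $\overline{\mu}_k^2+a_0\overline{\mu}_k+b_0$ of $\Delta^2-a_0\Delta+b_0$. Such an eigenvalue is negative exactly when the ratio in \eqref{eq9998} is less than $1$, so $k_0^*$ marks the first nonnegative limit eigenvalue; the hypothesis that this ratio is strictly below $1$ at index $k_0^*-1$ guarantees that $0$ is not a limit eigenvalue, whence for $\lambda$ larger than some $\widehat\Lambda$ one has $E_\lambda^0=\{0\}$ and $\dim E_\lambda^-$ equals the finite, $\lambda$-independent total multiplicity of the strictly negative limit eigenvalues. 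In the complementary case $\min\{a_0,b_0\}\ge0$ one has $a_0^-=b_0^-=0$, so $E_\lambda^-=\{0\}$ and $Q_\lambda=\|\cdot\|_\lambda^2$ is simply positive definite.

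With the splitting in hand I would verify a linking geometry for $J_\lambda$, which degenerates to mountain-pass geometry when $E_\lambda^-=\{0\}$. Near the origin, $(F_1)$ and $(F_2)$ yield $F(t)\le\frac{l_0+\varepsilon}2t^2+C|t|^p$ with $p>2$; since $Q_\lambda$ is positive definite on $E_\lambda^+$ with bottom of positive spectrum $\beta_\lambda$, choosing $\overline l_0$ below $\inf_{\lambda>\widehat\Lambda}\beta_\lambda$ makes the coefficient $1-(l_0+\varepsilon)/\beta_\lambda$ positive, so that $J_\lambda(u)\ge\frac12(1-(l_0+\varepsilon)/\beta_\lambda)Q_\lambda(u)-C'Q_\lambda(u)^{p/2}\ge\alpha>0$ on a small sphere $\{u\in E_\lambda^+:Q_\lambda(u)^{1/2}=\rho\}$ whenever $l_0<\overline l_0$. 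On the finite-dimensional region $Q=\{v+se:v\in E_\lambda^-,\ s\ge0,\ \|v+se\|_\lambda\le R\}$ with a fixed $e\in E_\lambda^+$, the sign condition $F\ge0$ in $(F_4)$ keeps $J_\lambda\le0$ on the face $s=0$, while the superlinear lower bound coming from $(F_2)$ and $(F_4)$ forces $J_\lambda\to-\infty$ on the remaining faces, so $J_\lambda\le0$ on $\partial Q$. The Linking Theorem then produces a Cerami sequence $\{u_n\}$ at a level $c\ge\alpha>0$.

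The main obstacle, and the decisive use of the steep well, is compactness. Boundedness of $\{u_n\}$ follows from $(F_4)$, since $J_\lambda(u_n)-\tfrac12\langle J_\lambda'(u_n),u_n\rangle\ge\tfrac{l_*}2\int_{\bbrn}|u_n|^p\,dx$ controls the $L^p$-norm, which together with the coercivity of $Q_\lambda$ modulo the finite-dimensional $E_\lambda^-$ bounds $\|u_n\|_\lambda$. For strong convergence I would exploit $(B_2)$: for large $\lambda$, $\int_{\bbrn\setminus\mathcal B_\infty}u^2\,dx\le\frac1{\lambda b_\infty}\int_{\bbrn}\lambda b(x)u^2\,dx$, so the mass escaping the fixed finite-measure set $\mathcal B_\infty$ is small uniformly, giving an almost-compact embedding $E_\lambda\hookrightarrow L^q(\bbrn)$ for $2\le q<2^*$; combined with the local compact Sobolev embedding this upgrades weak to strong convergence and shows the weak limit $u$ solves $J_\lambda'(u)=0$. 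The delicate point is that the spectral splitting itself moves with $\lambda$, so I must take $\widehat\Lambda$ large enough that the dimension count, the positivity constant $\beta_\lambda$ on $E_\lambda^+$, and the compactness threshold all hold simultaneously; granting this, $u$ is a critical point with $J_\lambda(u)=c>0$, hence a nontrivial solution of $(\mathcal P_\lambda)$.
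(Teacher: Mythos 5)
Your overall strategy is the one the paper follows: the same $\lambda$-dependent Hilbert space built from the positive parts of $a_0$, $b_0$ and $(\lambda b+b_0)$, a finite-dimensional negative decomposition of the quadratic form whose spectrum converges to the Navier eigenvalues $\overline{\mu}_k^2+a_0\overline{\mu}_k+b_0$ on $\Omega$, a linking (or mountain-pass) geometry with $\overline{l}_0$ chosen against the bottom of the positive spectrum, boundedness of the Cerami sequence from $(F_4)$, and nontriviality of the weak limit via the steep-well estimate $\int_{\bbr^N\setminus\mathcal{B}_\infty}u^2\le(\lambda b_\infty+b_0)^{-1}\|u\|_\lambda^2$ together with compactness on the finite-measure set $\mathcal{B}_\infty$. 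That last part, in particular, is exactly how the paper rules out $u_{\lambda,0}=0$ in the proof of Theorem~\ref{thm0001}.

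The genuine gap is in how you obtain the splitting $E=E_\lambda^-\oplus E_\lambda^0\oplus E_\lambda^+$. You assert that the subtracted form $\int_{\bbr^N}(a_0^-|\nabla u|^2+b_0^-u^2)\,dx$ is a relatively compact perturbation of $\|\cdot\|_\lambda^2$, hence only finitely many nonpositive eigenvalues. This is false for the gradient term when $a_0<0$: the Gagliardo--Nirenberg inequality only gives relative \emph{boundedness}, $\|\nabla u\|_{L^2}^2\le B_0^2C_\lambda^{1/2}\|u\|_\lambda^2$ with $C_\lambda\to 4\mathcal{A}_\infty^2B_0^4\neq0$, and a sequence of bumps translating to infinity is bounded in $E_\lambda$, converges weakly to $0$, yet keeps $\int|\nabla u_n|^2$ bounded away from zero, so the associated operator is not compact. (Only the piece $\int(\lambda b+b_0)^-u^2$, supported in a finite-measure set, is compact.) The paper does not use a compactness argument here; instead it studies the generalized Rayleigh quotients $\beta_k(\lambda)=\inf\{\|u\|_\lambda^2:\mathcal{G}_\lambda(u,u)=1,\ u\perp_\lambda\mathcal{N}_{\lambda,1},\dots,\mathcal{N}_{\lambda,k-1}\}$ one index at a time (Lemmas~2.1--2.6), proving attainment for $\lambda>\Lambda_k$ precisely because the minimizing level $\beta_k(\lambda)\le\beta_k^0$ eventually falls below the noncompactness threshold $(\lambda b_\infty+b_0)^{1/2}/(\max\{-a_0,0\}B_0^2)$, and then proving $\beta_k(\lambda)\to\beta_k^0$ so that only the indices $k<k_0^*$ contribute a negative direction. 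You flag at the end that the splitting ``moves with $\lambda$'' and must be controlled, but the mechanism you offer for producing it in the first place does not work; this quantitative large-$\lambda$ eigenvalue analysis is the actual content that has to replace your compactness claim.
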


\begin{theorem}\label{thm0002}
(The asymptotically linear case) Suppose that the conditions $(B_1)$--$(B_3)$ and $(F_1)$--$(F_3)$ hold with $p=2$.  If either $\min\{a_0,b_0\}\geq0$ or $\min\{a_0,b_0\}<0$ with
\begin{equation*}
\frac{\overline{\mu}_{k_0^*-1}^2+\max\{a_0, 0\}\overline{\mu}_{k_0^*-1}+\max\{b_0,0\}}{\max\{-a_0, 0\}\overline{\mu}_{k_0^*-1}+\max\{-b_0,0\}}<1
\end{equation*}
then there exist positive constants $\overline{l}_0$, $\overline{l}_\infty$ and $\widehat{\Lambda}$ such that Problem~$(\mathcal{P}_\lambda)$ has a nontrivial solution for $\lambda>\widehat{\Lambda}$ in the cases of $l_0<\overline{l}_0$ and $l_\infty>\overline{l}_\infty$ with $l_\infty\not\in\sigma(\Delta^2-a_0\Delta+b_0, L^2(\Omega))$, where $\sigma(\Delta^2-a_0\Delta+b_0, L^2(\Omega))$ is the spectrum of $\Delta^2-a_0\Delta+b_0$ in $L^2(\Omega)$.
\end{theorem}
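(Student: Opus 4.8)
The plan is to realize the solutions of $(\mathcal{P}_\lambda)$ as critical points of the energy functional
$$I_\lambda(u)=\frac12\int_{\bbrn}\big(|\Delta u|^2-a_0|\nabla u|^2+(\lambda b(x)+b_0)u^2\big)\,dx-\int_{\bbrn}F(u)\,dx$$
on a suitable Hilbert space $E_\lambda$, and to produce a nontrivial critical point by a linking argument. First I would fix the functional-analytic framework, which is the delicate new point when $a_0<0$: here the negative part of the quadratic form $Q_\lambda(u)=\int_{\bbrn}(|\Delta u|^2-a_0|\nabla u|^2+(\lambda b(x)+b_0)u^2)dx$ is fed both by $(\lambda b(x)+b_0)^-$ and by $-a_0\Delta$. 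The Gagliardo-Nirenberg inequality (used in the preliminary observations) allows one to absorb $-a_0\|\nabla u\|_2^2$ into $\frac12\|\Delta u\|_2^2+C\|u\|_2^2$, so that $Q_\lambda$ is bounded below modulo an $L^2$ term and the self-adjoint operator $A_\lambda=\Delta^2-a_0\Delta+(\lambda b(x)+b_0)$ has only finitely many eigenvalues below any fixed level. Invoking the spectral observations together with the steep potential well, for $\lambda$ large the negative spectrum of $A_\lambda$ is governed by that of $\Delta^2-a_0\Delta+b_0$ on $\Omega$, and I would obtain an orthogonal splitting $E_\lambda=E_\lambda^-\oplus E_\lambda^+$ with $\dim E_\lambda^-<\infty$ and $Q_\lambda(u)\ge\delta\|u\|^2$ on $E_\lambda^+$ for some $\delta>0$. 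The strict inequality at $\overline{\mu}_{k_0^*-1}$ assumed in the statement is exactly what excludes a kernel and guarantees this spectral gap.

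With the splitting fixed, I would verify the linking geometry. From $(F_1)$ and the Sobolev embedding $E_\lambda\hookrightarrow L^2$ with constant $C_2$, and since $(F_3)$ gives $F(t)\le\frac{l_\infty}{2}t^2$ while $F(t)=\frac{l_0}{2}t^2+o(t^2)$ near $0$, the choice $l_0<\overline{l}_0:=\delta/C_2^2$ yields $I_\lambda(u)\ge\alpha>0$ on a small sphere $\{u\in E_\lambda^+:\|u\|=\rho\}$. For the complementary estimate I would pick an eigenvector $e\in E_\lambda^+$ of $A_\lambda$ with eigenvalue $\mu_e$; since $(F_2)$ gives $F(t)\sim\frac{l_\infty}{2}t^2$ at infinity, taking $l_\infty>\overline{l}_\infty$ large enough that $\mu_e<l_\infty$ forces $I_\lambda\to-\infty$ along $E_\lambda^-\oplus\bbr e$, hence $I_\lambda\le0$ on the boundary of a cylinder $\{v+te:v\in E_\lambda^-,\,t\ge0,\,\|v+te\|\le R\}$ for $R$ large. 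This is the classical linking configuration, with $\dim E_\lambda^-<\infty$.

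The main obstacle is the compactness, which I would secure through the Cerami condition. For boundedness, assuming a Cerami sequence has $\|u_n\|\to\infty$, I would normalize $w_n=u_n/\|u_n\|$, pass to a weak limit $w$, and use the asymptotic linearity $(F_2)$ to show $w$ satisfies the limiting linear relation $A_\lambda w=l_\infty w$ with $w\not\equiv0$; this places $l_\infty$ in $\sigma(A_\lambda)$. For $\lambda$ large the part of $\sigma(A_\lambda)$ below a fixed level lies close to $\sigma(\Delta^2-a_0\Delta+b_0,L^2(\Omega))$ by the steep-well concentration, so the hypothesis $l_\infty\notin\sigma(\Delta^2-a_0\Delta+b_0,L^2(\Omega))$ yields a contradiction; thus Cerami sequences are bounded. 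For convergence, condition $(B_2)$ (finiteness of $|\mathcal{B}_\infty|$) together with the steep well gives, for $\lambda$ large, a compact embedding of $E_\lambda$ into $L^2(\bbrn)$ on the sublevel region of the potential, upgrading weak to strong convergence. The subtle part here is that while passing to the limit the negative contribution of $-a_0\Delta$ must be controlled uniformly in $n$ by the Gagliardo-Nirenberg estimate, so that no mass is transferred to the $-a_0\Delta$ term; this is precisely where the framework for $a_0<0$ is tested.

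Finally, with the linking geometry and the Cerami condition established, the linking theorem produces a critical value $c_\lambda\ge\alpha>0$ and a corresponding critical point $u_\lambda\ne0$, which is the sought nontrivial solution of $(\mathcal{P}_\lambda)$. Tracking the thresholds $\overline{l}_0$, $\overline{l}_\infty$ and the size of $\lambda$ needed for the spectral concentration and the compact embedding shows that a single $\widehat{\Lambda}$ serves for all $\lambda>\widehat{\Lambda}$.
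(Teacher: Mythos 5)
Your proposal follows essentially the same route as the paper: the $\max\{a_0,0\}$-weighted Hilbert space $E_\lambda$ with Gagliardo--Nirenberg control of the indefinite part, a finite-dimensional splitting whose negative piece is governed for large $\lambda$ by the spectrum of $\Delta^2-a_0\Delta+b_0$ on $\Omega$ (the paper builds this by hand via the constrained minimizations $\beta_k(\lambda)$ on $\mathcal{M}_\lambda$ rather than by quoting spectral theory for $A_\lambda$), the linking geometry under thresholds on $l_0$ and $l_\infty$, and boundedness of Cerami sequences by normalization plus the non-resonance hypothesis. The only caveats are that $E_\lambda$ does not embed compactly into $L^2(\bbr^N)$ --- one only has compactness into $L^2(\mathcal{B}_\infty)$ together with the tail bound $\|u\|^2_{L^2(\bbr^N\setminus\mathcal{B}_\infty)}\leq(\lambda b_\infty+b_0)^{-1}\|u\|_\lambda^2$, which is what the paper actually uses both to show the normalized weak limit $w$ is nonzero and to show the weak limit of the Cerami sequence is nontrivial --- and that this nonvanishing of $w$ additionally needs the spectral-gap estimate for $\mathcal{D}_\lambda$ on $(\mathcal{M}_\lambda^{k_0^*-1})^\perp$; both repairs are exactly the paper's own steps.
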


Since Problem~$(\mathcal{P}_\lambda)$  depends on the parameter $\lambda$, it is natural to investigate the concentration behavior of the solutions for $\lambda\to+\infty$.  Our result in this topic can be stated as follows.
\begin{theorem}\label{thm0003}
Suppose that $u_\lambda$ is the nontrivial solution of Problem~$(\mathcal{P}_{\lambda})$ obtained by Theorem~\ref{thm0001} or Theorem~\ref{thm0002}.  Then $u_\lambda\to u_*$ strongly in $\h$ as $\lambda\to+\infty$ up to a subsequence for some $u_*\in H_0^1(\Omega)\cap H^2(\Omega)$.  Furthermore, $u_*$ is a nontrivial weak solution of the following equation
\begin{equation}\label{eq0030}
\left\{\aligned&\Delta^2u-a_0\Delta u+b_0u=f(u)&\quad\text{in }\Omega,\\%
&u=\Delta u=0&\quad\text{on }\partial\Omega.\endaligned\right.%
\end{equation}
\end{theorem}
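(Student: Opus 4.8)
The plan is to follow the concentration strategy standard for steep potential well problems, adapted to the indefinite biharmonic setting built in the previous sections. Throughout let $I_\lambda$ be the energy functional of $(\mathcal{P}_\lambda)$ and let $c_\lambda=I_\lambda(u_\lambda)$ be the critical value producing $u_\lambda$. First I would establish a bound on $\{u_\lambda\}$ in $\h$ uniform in $\lambda$ for $\lambda$ large. Since $u_\lambda$ arises from a minimax scheme whose linking (or mountain-pass) geometry is controlled independently of $\lambda$, the levels $c_\lambda$ are bounded above by a constant independent of $\lambda$; combining this with the Nehari-type identity $c_\lambda=I_\lambda(u_\lambda)-\frac12\langle I_\lambda'(u_\lambda),u_\lambda\rangle$ and condition $(F_4)$ in the superlinear case, or the monotonicity $(F_3)$ together with $l_\infty\not\in\sigma(\Delta^2-a_0\Delta+b_0,L^2(\Omega))$ in the asymptotically linear case, yields a uniform bound on $\|u_\lambda\|_{\h}$ and, crucially, on the well term $\lambda\int_{\bbr^N}b(x)u_\lambda^2\,dx$. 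Passing to a subsequence I then obtain $u_\lambda\rightharpoonup u_*$ in $\h$, $u_\lambda\to u_*$ in $L^q_{loc}(\bbr^N)$ for $2\le q<2^*$, and $u_\lambda\to u_*$ a.e.

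Next I would identify the limit space and the limiting equation. From $\lambda\int_{\bbr^N}b(x)u_\lambda^2\,dx\le C$, together with Fatou and weak lower semicontinuity, $\int_{\bbr^N}b(x)u_*^2\,dx\le\liminf_\lambda\int_{\bbr^N}b(x)u_\lambda^2\,dx=0$, so $u_*=0$ a.e. on $\{b>0\}$; by $(B_3)$ this forces $u_*=0$ a.e. on $\bbr^N\setminus\overline{\Omega}$, and since $u_*\in\h$ we get $u_*\in H_0^1(\Omega)\cap H^2(\Omega)$. Testing the weak form of $(\mathcal{P}_\lambda)$ against any $\varphi\in C_0^\infty(\Omega)$, the term $\lambda\int_\Omega b\,u_\lambda\varphi\,dx$ vanishes identically since $b\equiv0$ on $\Omega$, while the linear terms converge by weak convergence and the nonlinear term $f(u_\lambda)\to f(u_*)$ is handled by the strong $L^q_{loc}$ convergence and the growth $(F_1)$--$(F_2)$; hence $u_*$ is a weak solution of \eqref{eq0030}.

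Then I would upgrade to strong convergence in $\h$. The decisive point is that the tails vanish: writing $\bbr^N=\mathcal{B}_\infty\cup\mathcal{B}_\infty^c$, on $\mathcal{B}_\infty^c$ one has $b\ge b_\infty$ so $\int_{\mathcal{B}_\infty^c}u_\lambda^2\,dx\le b_\infty^{-1}\lambda^{-1}\big(\lambda\int_{\bbr^N}b\,u_\lambda^2\,dx\big)\to0$, while on the finite-measure set $\mathcal{B}_\infty$ the remaining mass is controlled by local compactness. Testing the difference of the equations for $u_\lambda$ and $u_*$, and using the splitting of the norm into the definite part and the finite-dimensional negative subspace supplied by the variational setting, I would prove $\|u_\lambda\|_{\h}\to\|u_*\|_{\h}$, which with weak convergence gives strong convergence. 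Nontriviality of $u_*$ then follows from the uniform lower bound $c_\lambda\ge\alpha>0$ inherent in the minimax geometry and the identity $c_\lambda=\int_{\bbr^N}\big(\tfrac12 f(u_\lambda)u_\lambda-F(u_\lambda)\big)\,dx$: strong $L^p$ convergence passes this to $\int_\Omega\big(\tfrac12 f(u_*)u_*-F(u_*)\big)\,dx\ge\alpha>0$, so $u_*\ne0$.

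I expect the main obstacle to be this strong-convergence-plus-nontriviality step in the indefinite case $a_0<0$. There one must simultaneously annihilate the well tails $\lambda\int_{\bbr^N}b\,u_\lambda^2\,dx$ and control the finite-dimensional negative subspace of $\Delta^2-a_0\Delta+(\lambda b+b_0)$, whose eigenelements themselves vary with $\lambda$, so that the orthogonal splitting used to bound and to pass to the limit in $\|u_\lambda\|_{\h}$ stays uniform; securing the uniform positive lower bound on $c_\lambda$ under the linking geometry is the crux of excluding $u_*=0$.
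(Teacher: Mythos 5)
Your proposal is correct and follows essentially the same route as the paper: a $\lambda$-uniform bound on $\|u_\lambda\|_\lambda$ (from Lemma~\ref{lem0009} and the proof of Theorem~\ref{thm0002}), concentration onto $\Omega$ via the vanishing of $\lambda\int_{\bbr^N}b(x)u_\lambda^2dx$, strong $\h$-convergence by the argument of Lemma~\ref{lem0010}, identification of the limit equation, and nontriviality from the uniform lower bound $c_\lambda\geq C>0$ passed to the limit. Your variant of the last step (using $c_\lambda=\int_{\bbr^N}(\tfrac12 f(u_\lambda)u_\lambda-F(u_\lambda))dx$ rather than $\mathcal{F}(u_*)=\lim_\lambda\mathcal{E}_\lambda(u_\lambda)$) is an inessential difference.
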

\begin{remark}{\em
Theorem~\ref{thm0003} actually gives an existence result to \eqref{eq0030} in the case of $a_0\leq-\mu_1$, where the nonlinearities are superlinear and subcritical or asymptotically linear at infinity.  To our best knowledge, such result has not been obtained in literatures no matter what the nonlinearities are superlinear and subcritical or asymptotically linear at infinity.}
\end{remark}

Through this paper, $C$ and $C'$ will be indiscriminately used to denote various positive constants, $o_n(1)$ and $o_\lambda(1)$ will always denote the quantities tending towards zero as $n\to\infty$ and $\lambda\to+\infty$ respecitvely.%

\section{The variational setting}
In this section, we will give the variational setting of $(\mathcal{P}_\lambda)$.  Let%
\begin{equation*}
E_\lambda=\{u\in\h:\int_{\bbr^N}(\lambda b(x)+b_0)^+u^2dx<+\infty\},%
\end{equation*}
where $(\lambda b(x)+b_0)^+=\max\{\lambda b(x)+b_0, 0\}$.  Then by the condition $(B_1)$, $E_\lambda$ is a Hilbert space equipped with the inner product%
\begin{equation*}
\langle u,v \rangle_\lambda=\int_{\bbr^N}(\Delta u\Delta v+\max\{a_0, 0\}\nabla u\nabla v+(\lambda b(x)+b_0)^+uv)dx%
\end{equation*}
for all $\lambda>0$.  The corresponding norm in $E_\lambda$ is given by $\|u\|_\lambda=\langle u,u \rangle_\lambda^{\frac12}$.  By the condition $(B_2)$ and the Sobolev inequality and the H\"older inequality, for $\lambda>\max\{0, -\frac{b_0}{b_\infty}\}$, we have %
\begin{eqnarray}
\|u\|_{L^2(\bbr^N)}^2\leq|\mathcal{B}_\infty|^{\frac{2^*-2}{2^*}}S^{-1}\|\nabla u\|_{L^2(\bbr^N)}^2+\frac{1}{\lambda b_\infty+b_0}\|u\|_{\lambda}^2\label{eq0019},%
\end{eqnarray}
where $\|\cdot\|_{L^p(\bbr^N)}$ is the usual norms in $L^p(\bbr^N)$ for all $p\geq1$, $S$ is the best Sobolev embedding constants from $D^{1,2}(\bbr^N)$ to $L^{2^*}(\bbr^N)$ and given by%
\begin{equation*}
S=\inf\{\|\nabla u\|_{L^2(\bbr^N)}^2 : u\in D^{1,2}(\bbr^N), \|u\|_{L^{2^*}(\bbr^N)}^2=1\}.%
\end{equation*}
Let $\mathcal{A}_\infty=|\mathcal{B}_\infty|^{\frac{2^*-2}{2^*}}S^{-1}$.  If $a_0>0$ then by \eqref{eq0019}, we can see
\begin{eqnarray}
\|u\|_{L^2(\bbr^N)}^2\leq(\mathcal{A}_\infty a_0^{-1}+\frac{1}{\lambda b_\infty+b_0})\|u\|_\lambda^2\label{eq0001}.
\end{eqnarray}
If $a_0\leq0$ then by \eqref{eq0019} and the Young and the Gagliardo-Nirenberg inequalities, we have
\begin{equation}  \label{eq0009}
\|u\|_{L^2(\bbr^N)}^2\leq(4\mathcal{A}_\infty^2 B^4_0+\frac{2}{\lambda b_\infty+b_0})\|u\|_{\lambda}^2,
\end{equation}
where $B_0>0$ is the constant in the Gagliardo-Nirenberg inequality:
\begin{eqnarray*}
\|\nabla u\|_{L^2(\bbr^N)}\leq B_0\|\Delta u\|_{L^2(\bbr^N)}^{\frac12}\|u\|_{L^2(\bbr^N)}^{\frac12}\quad\text{for all }u\in H^2(\bbr^N).
\end{eqnarray*}
Thus, by \eqref{eq0001} and \eqref{eq0009}, we get
\begin{equation}\label{eq0020}
\|u\|_{L^2(\bbr^N)}^2\leq C_\lambda\|u\|_\lambda^2,
\end{equation}
where
\begin{equation*}
C_\lambda=\left\{\aligned&\mathcal{A}_\infty a_0^{-1}+\frac{1}{\lambda b_\infty+b_0},\quad\text{if }a_0>0,\\
&4\mathcal{A}_\infty^2 B^4_0+\frac{2}{\lambda b_\infty+b_0},\quad\text{if }a_0\leq0.\endaligned\right.
\end{equation*}
\eqref{eq0020}, together with the Gagliardo-Nirenberg inequality, implies
\begin{eqnarray}
\|\nabla u\|_{L^2(\bbr^N)}\leq B_0\|\Delta u\|_{L^2(\bbr^N)}^{\frac12}\|u\|_{L^2(\bbr^N)}^{\frac12}
\leq B_0C_\lambda^{\frac14}\|u\|_{\lambda}.\label{eq0010}
\end{eqnarray}
Combining  \eqref{eq0020} and \eqref{eq0010}, we deduce that $E_\lambda$ is embedded continuously into $\h$ for $\lambda>\max\{0, \frac{-b_0}{b_\infty}\}$.  On the other hand, by \eqref{eq0020} and the conditions $(B_1)$--$(B_2)$, we also have
\begin{eqnarray}
\int_{\bbr^N}(\lambda b(x)+b_0)^-u^2dx\leq\max\{0, -b_0\}\|u\|_{L^2(\bbr^N)}^2
\leq\max\{0, -b_0\}C_\lambda^{\frac12}\|u\|_{\lambda}^2.\label{eq0011}
\end{eqnarray}
Using $(F_1)$--$(F_3)$, we can obtain that $0\leq F(t)\leq2l_0|t|^2+C|t|^p$.  It follows from \eqref{eq0020}--\eqref{eq0011} and the Sobolev embedding theorem that the functional $\mathcal{E}_\lambda(u):E_\lambda\to \mathbb{R}$ given by
\begin{equation*}\label{eq0012}
\mathcal{E}_{\lambda}(u)=\frac12\mathcal{D}_{\lambda}(u,u)-\int_{\bbr^N}F(x,u)dx%
\end{equation*}
is well defined and it belongs to $C^1$ for $\lambda>\max\{0, -\frac{b_0}{b_\infty}\}$, where
\begin{equation*}
\mathcal{D}_\lambda(u,v)=\langle u,v\rangle_{\lambda}-\mathcal{G}_\lambda(u,v)
\end{equation*}
with $\mathcal{G}_{\lambda}(u,v)=\max\{-a_0,0\}\langle \nabla u,\nabla v\rangle_{L^2(\bbr^N)}+\int_{\bbr^N}(\lambda b(x)+b_0)^-uvdx$.
Furthermore, by using a standard argument and the conditions $(F_1)$--$(F_2)$, we can show that $\mathcal{E}_\lambda(u)$ is the corresponding functional of $(\mathcal{P}_\lambda)$.   In what follows, we will make some further observations on $\mathcal{D}_{\lambda}(u,u)$.

If $\min\{a_0,b_0\}\geq0$ then $\mathcal{G}_{\lambda}(u,v)=0$, which gives that $\mathcal{D}_\lambda(u,v)=\langle u,v\rangle_{\lambda}$ for all $(u,v)\in E_\lambda$ and then $\mathcal{D}_\lambda(u,u)$ is definite on $E_\lambda$.

If $\min\{a_0,b_0\}<0$ and let
\begin{equation*}
\mathcal{M}_0=\bigg\{u\in H:\int_{\Omega}(\max\{-a_0, 0\}|\nabla u|^2+\max\{-b_0,0\}|u|^2)dx=1\bigg\}
\end{equation*}
and
\begin{equation}\label{eq01}
\mathcal{N}_j=\text{span}\bigg\{\phi_i:\phi_i\text{ is the corresponding function of } \overline{\mu}_j\bigg\},
\end{equation}
then it is well known that $\mathcal{M}_0$ is a natural constraint in $H$ and dim$(\mathcal{N}_j)<+\infty$ for all $j\in\bbn$.  In particular, dim$(\mathcal{N}_1)=1$ and $\phi_1$ is positive on $\Omega$.  Moreover,
 let
\begin{equation}\label{eq02}
\beta_j^0=\inf_{(\mathcal{M}^{j-1})^{\perp}}\int_{\Omega}(|\Delta u|^2+\max\{a_0,0\}|\nabla u|^2+\max\{b_0,0\}|u|^2)dx,\quad j=1,2,\cdots,
\end{equation}
where $(\mathcal{M}^{j-1})^{\perp}=\{u\in\mathcal{M}_0:\langle u,v\rangle_{H}=0\text{ for all }v\in\bigoplus_{i=1}^{j-1}\mathcal{N}_i\}$,
then by span$\{\phi_k\}=H$ and the orthogonality of $\{\phi_k\}$  in $H$, we can easily see from the Sobolev embedding theorem, the Gagliardo-Nirenberg inequality and the method of Lagrange multipliers that
\begin{equation*}
\beta_j^0=\frac{\overline{\mu}_j^2+\max\{a_0, 0\}\overline{\mu}_j+\max\{b_0,0\}}{\max\{-a_0, 0\}\overline{\mu}_j+\max\{-b_0,0\}}\quad\text{for all }j\in\bbn
\end{equation*}
and $\beta_j^0$ can be attained by $u\in H$ if and only if $u\in\mathcal{N}_j\cap \mathcal{M}_0$.
\begin{lemma}\label{lem0001}
Suppose that the conditions $(B_1)$--$(B_3)$ hold.  If
\begin{equation*}
\lambda>\Lambda_1:=\frac{(\max\{-a_0, 0\}\beta_1^0)^2B_0^4-b_0}{b_\infty},
\end{equation*}
then $\beta_1(\lambda)=\inf_{\mathcal{M}_{\lambda}}\|u\|^2_{\lambda}$ can be attained by some $e_1(\lambda)\in \mathcal{M}_{\lambda}$, where $\mathcal{M}_{\lambda}=\{u\in E_{\lambda}: \mathcal{G}_{\lambda}(u,u)=1\}$.  Moreover, $(e_1(\lambda),\beta_1(\lambda))$ satisfies the following equation
\begin{equation}\label{eq0022}
\left\{\aligned&\Delta^2 u-\max\{a_0, 0\}\Delta u+(\lambda b(x)+b_0)^+u=\beta(\max\{-a_0, 0\}\Delta u+(\lambda b(x)+b_0)^-u),\quad\text{in }\bbr^N,\\
&u\in\h,\endaligned\right.
\end{equation}
and $(e_1(\lambda),\beta_1(\lambda))\to(\phi_1,\beta_1^0)$ strongly in $\h$ as $\lambda\to+\infty$ up to a subsequence.
\end{lemma}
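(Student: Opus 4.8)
The plan is to read $\beta_1(\lambda)$ as a constrained minimization (equivalently, as the bottom of a generalized eigenvalue problem) and to run the direct method; the only genuine difficulty is the loss of compactness coming from the gradient part of $\mathcal{G}_\lambda$ when $a_0<0$, and the hypothesis $\lambda>\Lambda_1$ is exactly what removes it. First I would record that $\mathcal{M}_\lambda\neq\emptyset$ and $\beta_1(\lambda)>0$. Since the statement is only meaningful when $\min\{a_0,b_0\}<0$, any $u\in C_0^\infty(\Omega)$ satisfies $\mathcal{G}_\lambda(u,u)=\max\{-a_0,0\}\|\nabla u\|_{L^2(\bbrn)}^2+\max\{-b_0,0\}\|u\|_{L^2(\bbrn)}^2>0$ because $b\equiv0$ on $\Omega$, so after rescaling $\mathcal{M}_\lambda\neq\emptyset$. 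Combining \eqref{eq0010} and \eqref{eq0011} gives $\mathcal{G}_\lambda(u,u)\leq(\max\{-a_0,0\}B_0^2+\max\{-b_0,0\})C_\lambda^{1/2}\|u\|_\lambda^2$, whence $\beta_1(\lambda)$ is bounded below by a positive constant. Testing the quotient $\|u\|_\lambda^2/\mathcal{G}_\lambda(u,u)$ with a fixed $u\in C_0^\infty(\Omega)$ (its value is $\lambda$-independent, again because $b\equiv0$ there) also yields a uniform upper bound $\beta_1(\lambda)\leq\gamma<+\infty$, which I reuse later.

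For existence of the minimizer, take $\{u_n\}\subset\mathcal{M}_\lambda$ with $\|u_n\|_\lambda^2\to\beta_1(\lambda)$. It is bounded in $E_\lambda$, hence in $\h$ by \eqref{eq0020}--\eqref{eq0010}, so up to a subsequence $u_n\rightharpoonup e_1$ in $E_\lambda$ and in $\h$. Because $(\lambda b+b_0)^-$ is supported in $\mathcal{B}_\infty$, which has finite measure by $(B_2)$, a standard local-compactness argument gives $\int_{\bbrn}(\lambda b+b_0)^-u_n^2\,dx\to\int_{\bbrn}(\lambda b+b_0)^-e_1^2\,dx$. The delicate term is $\max\{-a_0,0\}\|\nabla u_n\|_{L^2(\bbrn)}^2$, for which weak convergence only gives lower semicontinuity; this is the heart of the proof. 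I would write $u_n=e_1+w_n$ with $w_n\rightharpoonup0$ and use Hilbert orthogonality (the cross terms $\langle e_1,w_n\rangle_\lambda$ and $\mathcal{G}_\lambda(e_1,w_n)$ vanish) to get $\|u_n\|_\lambda^2=\|e_1\|_\lambda^2+\|w_n\|_\lambda^2+o_n(1)$ and $\mathcal{G}_\lambda(u_n,u_n)=\mathcal{G}_\lambda(e_1,e_1)+\max\{-a_0,0\}\|\nabla w_n\|_{L^2(\bbrn)}^2+o_n(1)$. For the vanishing part, $(\lambda b+b_0)^+\geq\lambda b_\infty+b_0$ outside $\mathcal{B}_\infty$ together with the Gagliardo--Nirenberg inequality yields $\|w_n\|_\lambda^2\geq\Theta(\lambda)\,\mathcal{G}_\lambda(w_n,w_n)+o_n(1)$, where $\Theta(\lambda)\to+\infty$ and the threshold $\Lambda_1$ is tuned precisely so that $\Theta(\lambda)>\beta_1(\lambda)$ for $\lambda>\Lambda_1$ (this is where the factor $(\max\{-a_0,0\}\beta_1^0)^2B_0^4$ in $\Lambda_1$ enters). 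Writing $a=\mathcal{G}_\lambda(e_1,e_1)\in[0,1]$ and using $\|e_1\|_\lambda^2\geq a\beta_1(\lambda)$ (rescale $e_1$ onto $\mathcal{M}_\lambda$ when $a>0$), I obtain $\beta_1(\lambda)\geq a\beta_1(\lambda)+\Theta(\lambda)(1-a)$, which together with $\Theta(\lambda)>\beta_1(\lambda)$ forces $a=1$. Hence $e_1=:e_1(\lambda)\in\mathcal{M}_\lambda$ with $\|e_1(\lambda)\|_\lambda^2\leq\liminf\|u_n\|_\lambda^2=\beta_1(\lambda)$, so $e_1(\lambda)$ attains the infimum. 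The equation \eqref{eq0022} then follows from the Lagrange multiplier rule: $\mathcal{M}_\lambda$ is a $C^1$ manifold near $e_1(\lambda)$ since $\langle\mathcal{G}_\lambda'(e_1(\lambda)),e_1(\lambda)\rangle=2\mathcal{G}_\lambda(e_1(\lambda),e_1(\lambda))=2\neq0$, so there is $\beta$ with $\langle e_1(\lambda),v\rangle_\lambda=\beta\,\mathcal{G}_\lambda(e_1(\lambda),v)$ for all $v\in E_\lambda$; taking $v=e_1(\lambda)$ gives $\beta=\|e_1(\lambda)\|_\lambda^2=\beta_1(\lambda)$, and this identity is exactly the weak form of \eqref{eq0022}.

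For the concentration as $\lambda\to+\infty$, since $C_\lambda$ stays bounded and $\beta_1(\lambda)\leq\gamma$, the family $\{e_1(\lambda)\}$ is bounded in $\h$, so $e_1(\lambda)\rightharpoonup u_*$ in $\h$ up to a subsequence. Here the compactness is cleaner than for fixed $\lambda$: the tail estimate $\int_{\bbrn\setminus\mathcal{B}_\infty}e_1(\lambda)^2\,dx\leq(\lambda b_\infty+b_0)^{-1}\|e_1(\lambda)\|_\lambda^2\to0$ (read off from \eqref{eq0019}), combined with compactness on the finite-measure set $\mathcal{B}_\infty$, upgrades this to strong convergence in $L^2(\bbrn)$; then $\|\nabla e_1(\lambda)\|_{L^2(\bbrn)}^2=-\int_{\bbrn}e_1(\lambda)\Delta e_1(\lambda)\,dx\to-\int_{\bbrn}u_*\Delta u_*\,dx=\|\nabla u_*\|_{L^2(\bbrn)}^2$, giving strong convergence of the gradients too. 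From $\int_{\bbrn}\lambda b(x)e_1(\lambda)^2\,dx\leq C$ I deduce $\int_{\bbrn}b(x)u_*^2\,dx=0$, so $u_*$ vanishes outside $\overline{\Omega}$ and one checks $u_*\in H_0^1(\Omega)\cap H^2(\Omega)=H$. Passing the constraint and the energy to the limit shows $u_*\in\mathcal{M}_0$, and lower semicontinuity against the upper bound coming from $\Omega$-supported test functions gives $\langle u_*,u_*\rangle_H=\beta_1^0$ and $\beta_1(\lambda)\to\beta_1^0$; since $\beta_1^0$ is attained only on $\mathcal{N}_1\cap\mathcal{M}_0$ (the characterization stated after \eqref{eq02}) and $\dim(\mathcal{N}_1)=1$ with $\phi_1>0$, we conclude $u_*=\phi_1$, and the matching of norms promotes weak to strong convergence $e_1(\lambda)\to\phi_1$ in $\h$.

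The main obstacle is the fixed-$\lambda$ compactness for the gradient term: in contrast with the $\lambda\to+\infty$ regime, for fixed $\lambda$ there is no global strong $L^2$ convergence, and the only way to exclude escaping mass is the spectral-gap estimate $\Theta(\lambda)>\beta_1(\lambda)$, which is exactly what $\lambda>\Lambda_1$ encodes via the Gagliardo--Nirenberg inequality. A secondary delicate point is the precise identification of the boundary behaviour of $u_*$, i.e.\ that the $\h$-limit of the penalized minimizers realizes the limiting variational problem on $\Omega$ and hence coincides with $\phi_1$; this is where the eigenfunction characterization of $\beta_1^0$ is indispensable.
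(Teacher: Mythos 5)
Your proposal is correct in substance but reaches the existence of the minimizer by a genuinely different route. The paper applies the Ekeland variational principle on $\mathcal{M}_\lambda$ to produce a minimizing sequence that is simultaneously an almost-critical sequence for $J_\lambda(u)=\frac12\|u\|_\lambda^2-\frac{\beta_1(\lambda)}{2}\mathcal{G}_\lambda(u,u)$, identifies the weak limit as a critical point of $J_\lambda$, and then proves strong convergence of the whole sequence via the coercivity estimate \eqref{eq0014}; the Euler--Lagrange equation \eqref{eq0022} comes for free from the Ekeland sequence. You instead run the direct method with a dichotomy splitting $u_n=e_1+w_n$, show that the escaping part $w_n$ would carry a Rayleigh quotient at least $\Theta(\lambda)=(\lambda b_\infty+b_0)^{1/2}/(\max\{-a_0,0\}B_0^2)>\beta_1(\lambda)$, force $\mathcal{G}_\lambda(e_1,e_1)=1$, and only afterwards invoke the Lagrange multiplier rule (legitimate, since $\langle\mathcal{G}_\lambda'(e_1),e_1\rangle=2\neq 0$) to get $\langle e_1,v\rangle_\lambda=\beta_1(\lambda)\mathcal{G}_\lambda(e_1,v)$, which is exactly the paper's identity \eqref{eq0013}. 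The quantitative heart --- Gagliardo--Nirenberg plus the tail estimate outside $\mathcal{B}_\infty$, with $\Lambda_1$ tuned so that the resulting constant beats $\beta_1(\lambda)$ --- is the same in both arguments, and your concentration analysis as $\lambda\to+\infty$ is essentially identical to the paper's. Your approach is arguably cleaner in that it separates compactness from the derivation of the equation; the paper's approach has the advantage of producing the equation and the strong convergence in one stroke.

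One point needs patching. To obtain the conclusion for the \emph{stated} threshold $\Lambda_1=\big((\max\{-a_0,0\}\beta_1^0)^2B_0^4-b_0\big)/b_\infty$, you must know $\beta_1(\lambda)\leq\beta_1^0$, not merely $\beta_1(\lambda)\leq\gamma$ for some $\lambda$-independent $\gamma$. Your upper bound is obtained by testing with a fixed $u\in C_0^\infty(\Omega)$, but the infimum of the quotient over $C_0^\infty(\Omega)\subset H_0^2(\Omega)$ may exceed $\beta_1^0$, which is an infimum over $H^2(\Omega)\cap H_0^1(\Omega)$; so your $\gamma$ need not equal $\beta_1^0$ and your argument only yields the lemma for $\lambda$ beyond a possibly larger threshold. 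The fix is the paper's \eqref{eq0016}: test with $\phi_1$ itself (extended by zero), whose quotient is exactly $\beta_1^0$ and is $\lambda$-independent because $b\equiv 0$ on $\Omega$. The same sharp bound $\limsup_{\lambda\to+\infty}\beta_1(\lambda)\leq\beta_1^0$ is also what you need at the end to conclude $\beta_1(\lambda)\to\beta_1^0$ and to identify the limit $u_*$ with $\phi_1$ via the characterization of the minimizers of $\beta_1^0$. With that substitution your proof is complete and matches the lemma as stated.
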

\begin{proof}
We first prove that $\beta_1(\lambda)$ can be attained for $\lambda>\Lambda_1$.  Indeed,
by the Ekeland principle, there exists $\{u_n\}\subset\mathcal{M}_{\lambda}$ such that
\begin{enumerate}
\item[$(1)$] $\|u_n\|^2_{\lambda}=\beta_1(\lambda)+o_n(1)$;
\item[$(2)$] $\|v\|^2_{\lambda}\geq\|u_n\|^2_{\lambda}-\frac1n\|v-u_n\|_{\lambda}$ for all $v\in\mathcal{M}_{\lambda}$.
\end{enumerate}
For each $n\in\mathbb{N}$ and $w\in E_{\lambda}$,  by applying the implicit function theorem in a standard way and noting that  the conditions $(B_1)$--$(B_2)$, we can see that there exist $\ve_n>0$ and $t_n(l)\in C^1([-\ve_n, \ve_n])$ with $t_n(0)=1$ and $t_n'(0)=-\mathcal{G}_\lambda(u_n,w)$ such that $t_n(l)u_n+lw\in\mathcal{M}_{\lambda}$ for $l\in(0, \ve_n]$.  It follows from $(2)$ that
\begin{eqnarray*}
0&\geq&\|u_n\|^2_{\lambda}-\|t_n(l)u_n+lw\|^2_{\lambda}-\frac1n\|(t_n(l)-1)u_n+lw\|_{\lambda}\notag\\
&\geq&(1-t_n(l)^2)\|u_n\|^2_{\lambda}-2t_n(l)l\langle u_n,w\rangle_{\lambda}-l^2\|w\|^2_{\lambda}-\frac{t_n(l)-1}{n}\|u_n\|_{\lambda}-\frac ln\|w\|_{\lambda}.
\end{eqnarray*}
Multiplying this inequality with $l^{-1}$ on both side and letting $l\to0^+$, we deduce
\begin{eqnarray*}
0&\geq&(2\|u_n\|^2_{\lambda}+\frac1n\|u_n\|_{\lambda})\int_{\bbr^N}(\max\{-a_0, 0\}\nabla u_n\nabla w+(\lambda b(x)+b_0)^-u_nw)dx\\
&&-2\langle u_n,w\rangle_{\lambda}-\frac1n\|w\|_{\lambda}\\
&=&2(\beta_1(\lambda)\mathcal{G}_\lambda(u_n,w)-\langle u_n,w\rangle_{\lambda})+o_n(1).
\end{eqnarray*}
Since $w\in E_{\lambda}$ is arbitrary, we must have
\begin{equation}\label{eq0013}
o_n(1)=\beta_1(\lambda)\mathcal{G}_\lambda(u_n,w)-\langle u_n,w\rangle_{\lambda}
\end{equation}
for all $w\in E_{\lambda}$.  Let $J_{\lambda}(u)=\frac12\|u\|^2_{\lambda}-\frac{\beta_1(\lambda)}2\mathcal{G}_\lambda(u,u)$.  Then $J_\lambda'(u_n)w=o_n(1)$ for all $w\in E_{\lambda}$.  In particular, by the choice of $\{u_n\}$, we can also see that $\langle J_\lambda'(u_n),u_n\rangle_{E_{\lambda}^*, E_{\lambda}}=o_n(1)$, where $E_\lambda^*$ is the dual space of $E_\lambda$ and $\langle \cdot,\cdot\rangle_{E_{\lambda}^*, E_{\lambda}}$ is the duality pairing of $E_{\lambda}^*$ and $E_{\lambda}$.  On the other hand, by $(1)$, $\{u_n\}$ is bounded in $E_{\lambda}$.  Thus, without loss of generality, we may assume that $u_n\rightharpoonup e_1(\lambda)$ weakly in $E_{\lambda}$ as $n\to\infty$.  Note that $J_\lambda(u)$ is $C^2$ in $E_{\lambda}$ for $\lambda>\max\{0, \frac{-b_0}{b_\infty}\}$ due to the conditions $(B_1)$--$(B_2)$,  we have $J_\lambda'(e_1(\lambda))=0$ in $E_{\lambda}^*$.  It follows from the Sobolev embedding theorem, the conditions $(B_1)$--$(B_2)$ and similar arguments used in the proofs of \eqref{eq0020} and \eqref{eq0010} that
\begin{eqnarray}
o_n(1)&=&\langle J_\lambda'(u_n)-J_\lambda'(e_1(\lambda)),u_n-e_1(\lambda)\rangle_{E_{\lambda}^*, E_{\lambda}}\notag\\
&=&\|u_n-e_1(\lambda)\|_{\lambda}^2-\beta_1(\lambda)\bigg(\mathcal{G}_\lambda(u_n-e_1(\lambda),u_n-e_1(\lambda))\bigg)\notag\\
&\geq&\bigg(1-\frac{\beta_1(\lambda)\max\{-a_0, 0\}B_0^2}{(\lambda b_\infty+b_0)^{\frac12}}\bigg)\|u_n-e_1(\lambda)\|_{\lambda}^2+o_n(1).\label{eq0014}
\end{eqnarray}
Note that by the condition $(B_3)$, we get
\begin{equation}\label{eq0016}
\beta_1(\lambda)\leq\frac{\|\phi_1\|_{\lambda}^2}{\mathcal{G}_\lambda(\phi_1,\phi_1)}
=\beta_1^0.
\end{equation}
It follows from \eqref{eq0014} that for $\lambda>\Lambda_1$, $u_n\to e_1(\lambda)$ strongly in $E_{\lambda}$ as $n\to\infty$.  Thus, $e_1(\lambda)\in\mathcal{M}_\lambda$ and $\beta_1(\lambda)$ can be attained by $e_1(\lambda)$ for $\lambda>\Lambda_1$.  By \eqref{eq0013}, we also see that $(e_1(\lambda), \beta_1(\lambda))$ satisfies \eqref{eq0022}.

To complete proof of this lemma, we shall show that $(e_1(\lambda),\beta_1(\lambda))\to(\phi_1,\beta_1^0)$ strongly in $\h$ as $\lambda\to+\infty$ up to a subsequence.  Indeed, by \eqref{eq0016}, we know that $\|e_1(\lambda)\|_\lambda\leq\beta_1^0$ for all $\lambda>\Lambda_1$, which, together with \eqref{eq0020} and \eqref{eq0010}, implies that $\{e_1(\lambda)\}$ is bounded in $\h$.  Without loss of generality, we may assume that $e_1(\lambda)\rightharpoonup e_1^*$ weakly in $\h$ as $\lambda\to+\infty$.  Since $\|e_1(\lambda)\|_\lambda\leq\beta_1^0$ for all $\lambda>\Lambda_1$, by the condition $(B_1)$ and the Fatou lemma, we have
\begin{equation}\label{eq0007}
0=\liminf_{\lambda\to+\infty}\frac{\beta_{1}^0}{\lambda}\geq\liminf_{\lambda\to+\infty}\int_{\bbr^N}(b(x)+\frac{b_0}{\lambda})^+[e_1(\lambda)]^2dx
\geq\int_{\bbr^N}b(x)[e_1^*]^2dx.
\end{equation}
It follows from the condition $(B_3)$ that $e_1^*\in H_0^1(\Omega)$.  Thanks to the condition $(B_2)$ and the fact that $E_\lambda$ is embedded continuously into $\h$ for $\lambda>\max\{0, \frac{-b_0}{b_\infty}\}$, we can see from \eqref{eq0007} and the Sobolev embedding theorem that $e_1(\lambda)\to e_1^*$ strongly in $L^2(\bbr^N)$ as $\lambda\to+\infty$, then the Gagliardo-Nirenberg inequality yields that $e_1(\lambda)\to e_1^*$ strongly in $H^1(\bbr^N)$ as $\lambda\to+\infty$.  On the other hand, by the conditions $(B_1)$--$(B_3)$, we obtain from a variant of the
Lebesgue dominated convergence theorem (cf. \cite[Theorem~2.2]{PK74}) and the fact that $e_1(\lambda)\to e_1^*$ strongly in $H^1(\bbr^N)$ as $\lambda\to+\infty$ that $e_1^*\in\mathcal{M}_0$, which, together with the definition of $\beta_1^0$ and the conditions $(B_1)$--$(B_3)$, deduces
\begin{eqnarray*}
\liminf_{\lambda\to+\infty}\beta_1(\lambda)\geq\int_{\Omega}\big(|\Delta e_1^*|^2+\max\{a_0, 0\}|\nabla e_1^*|^2+\max\{b_0,0\}|e_1^*|^2\big)dx\geq\beta_1^0.
\end{eqnarray*}
Thus, we must have $\lim_{\lambda\to+\infty}\beta_1(\lambda)=\beta_1^0$.  Now, thanks to the conditions $(B_1)$--$(B_3)$ once more, we can see from the weak convergence of $\{e_1(\lambda)\}$ in $\h$  and the Fatou lemma that
\begin{eqnarray}
\beta_1^0=\lim_{\lambda\to+\infty}\|e_1(\lambda)\|_{\lambda}^2
\geq\int_{\Omega}|\Delta e_1^*|^2+\max\{a_0, 0\}|\nabla e_1^*|^2+\max\{b_0,0\}|e_1^*|^2dx\geq\beta_1^0.\label{eq0021}
\end{eqnarray}
Thus $\Delta e_k(\lambda)\to \Delta e_k^*$ strongly in $L^2(\bbr^N)$ as $\lambda\to+\infty$ since
$e_1(\lambda)\to e_1^*$ strongly in $L^2(\bbr^N)\cap H^1(\bbr^N)$ as $\lambda\to+\infty$, so that $e_1(\lambda)\to e_1^*$ strongly in $\h$ as $\lambda\to+\infty$.  Note that $e_1^*\in H$ which  attains $\beta_1^0$, we finally have $e_1^*=\phi_1$.
\end{proof}

Let $\mathcal{N}_{\lambda,1}=\text{span}\{u\in \mathcal{M}_\lambda:\|u\|_\lambda^2=\beta_1(\lambda)\}$.  Then by Lemma~\ref{lem0001}, we can see that $e_1(\lambda)\in\mathcal{N}_{\lambda,1}$ for $\lambda>\Lambda_1$.  Moreover, we also have the following lemma.
\begin{lemma}\label{lem0002}
Suppose that the conditions $(B_1)$--$(B_3)$ hold.  Then there exists $\Lambda_1^*\geq\Lambda_1$ such that $\mathcal{N}_{\lambda,1}=\text{span}\{e_1(\lambda)\}$ for $\lambda>\Lambda_1^*$.
\end{lemma}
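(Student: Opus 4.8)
The plan is to show that the first generalized eigenvalue $\beta_1(\lambda)$ is simple once $\lambda$ is large, by exploiting the strong $\h$-convergence established in Lemma~\ref{lem0001}. First I would record the basic structural fact about $\mathcal{N}_{\lambda,1}$: every minimizer $u\in\mathcal{M}_\lambda$ with $\|u\|_\lambda^2=\beta_1(\lambda)$ satisfies the Euler--Lagrange identity $\langle u,w\rangle_\lambda=\beta_1(\lambda)\mathcal{G}_\lambda(u,w)$ for all $w\in E_\lambda$ (this is \eqref{eq0013} with the $o_n(1)$ removed). By linearity this identity then holds for every $u\in\mathcal{N}_{\lambda,1}$, so that $\|u\|_\lambda^2=\beta_1(\lambda)\mathcal{G}_\lambda(u,u)$ on $\mathcal{N}_{\lambda,1}$. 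Since $\|\cdot\|_\lambda$ is a norm and $\beta_1(\lambda)>0$, this shows that $\mathcal{G}_\lambda$ is positive definite on $\mathcal{N}_{\lambda,1}$ and that every element of $\mathcal{N}_{\lambda,1}$, once normalized by $\mathcal{G}_\lambda(\cdot,\cdot)=1$, is again a minimizer realizing $\beta_1(\lambda)$.

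Next I would argue by contradiction. Suppose the conclusion fails; then there is a sequence $\lambda_n\to+\infty$ with $\dim\mathcal{N}_{\lambda_n,1}\geq2$. Using the positive definiteness of $\mathcal{G}_{\lambda_n}$ on $\mathcal{N}_{\lambda_n,1}$, I would apply Gram--Schmidt with respect to $\mathcal{G}_{\lambda_n}$ to produce $u_n,v_n\in\mathcal{N}_{\lambda_n,1}$ with $\mathcal{G}_{\lambda_n}(u_n,u_n)=\mathcal{G}_{\lambda_n}(v_n,v_n)=1$ and $\mathcal{G}_{\lambda_n}(u_n,v_n)=0$. Both $u_n$ and $v_n$ then lie in $\mathcal{M}_{\lambda_n}$ and attain $\beta_1(\lambda_n)$, so the convergence analysis carried out for $e_1(\lambda)$ in the second part of Lemma~\ref{lem0001} applies to each of them: up to a subsequence, $u_n\to u_*$ and $v_n\to v_*$ strongly in $\h$, with $u_*,v_*\in\mathcal{N}_1\cap\mathcal{M}_0$. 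Because $\dim\mathcal{N}_1=1$ and $\mathcal{N}_1\cap\mathcal{M}_0=\{\pm\phi_1\}$ (with $\phi_1$ normalized so that $\int_\Omega(\max\{-a_0,0\}|\nabla\phi_1|^2+\max\{-b_0,0\}\phi_1^2)dx=1$), we necessarily have $u_*=\sigma_1\phi_1$ and $v_*=\sigma_2\phi_1$ for some signs $\sigma_1,\sigma_2\in\{+1,-1\}$.

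Finally I would pass to the limit in the orthogonality relation $\mathcal{G}_{\lambda_n}(u_n,v_n)=0$. The gradient term $\max\{-a_0,0\}\langle\nabla u_n,\nabla v_n\rangle_{L^2(\bbrn)}$ converges to $\max\{-a_0,0\}\int_\Omega\nabla u_*\nabla v_*dx$ by the strong $H^1$-convergence contained in the Lemma~\ref{lem0001} argument. For the potential term $\int_{\bbrn}(\lambda_n b(x)+b_0)^-u_nv_ndx$, I would use that $0\leq(\lambda_n b(x)+b_0)^-\leq\max\{-b_0,0\}$ uniformly, that $u_nv_n\to u_*v_*$ in $L^1(\bbrn)$, and that $(\lambda_n b(x)+b_0)^-$ tends pointwise to $\max\{-b_0,0\}$ on $b^{-1}(0)=\overline{\Omega}$ and to $0$ elsewhere, so that by dominated convergence this term tends to $\max\{-b_0,0\}\int_\Omega u_*v_*dx$; here conditions $(B_1)$ and $(B_3)$ together with $u_*,v_*\in H_0^1(\Omega)$ are what localize the limit to $\Omega$. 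Collecting the two pieces gives $0=\lim_n\mathcal{G}_{\lambda_n}(u_n,v_n)=\sigma_1\sigma_2\int_\Omega(\max\{-a_0,0\}|\nabla\phi_1|^2+\max\{-b_0,0\}\phi_1^2)dx=\sigma_1\sigma_2=\pm1$, a contradiction. Hence $\dim\mathcal{N}_{\lambda,1}=1$ for all large $\lambda$, and since $e_1(\lambda)\in\mathcal{N}_{\lambda,1}$ by Lemma~\ref{lem0001}, the desired identity $\mathcal{N}_{\lambda,1}=\text{span}\{e_1(\lambda)\}$ follows. The delicate point is this last limit: verifying that the indefinite cross term $\mathcal{G}_{\lambda_n}(u_n,v_n)$ actually converges to $\int_\Omega(\max\{-a_0,0\}\nabla u_*\nabla v_*+\max\{-b_0,0\}u_*v_*)dx$ requires the strong (not merely weak) $\h$-convergence of the minimizers together with the uniform bound and pointwise behavior of the weights $(\lambda_n b+b_0)^-$.
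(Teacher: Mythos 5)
Your proof is correct, and it follows the same overall strategy as the paper: argue by contradiction along a sequence $\lambda_n\to+\infty$, extract two orthogonal normalized minimizers in $\mathcal{N}_{\lambda_n,1}$, use the convergence analysis of Lemma~\ref{lem0001} to send both to $\pm\phi_1$, and contradict the orthogonality. The difference lies in how the contradiction is extracted. The paper orthogonalizes with respect to $\langle\cdot,\cdot\rangle_{\lambda_n}$ (which, by the Euler--Lagrange identity you record, is equivalent on $\mathcal{N}_{\lambda_n,1}$ to your $\mathcal{G}_{\lambda_n}$-orthogonality), then proves the stronger fact $\|e_1(\lambda_n)-u(\lambda_n)\|_{\lambda_n}\to0$ --- which requires controlling the unbounded weight $(\lambda_n b+b_0)^{+}$ via the cited variant of the dominated convergence theorem --- and concludes from the Pythagorean identity that $0<2\beta_1^0=\lim\|e_1(\lambda_n)-u(\lambda_n)\|_{\lambda_n}^2=0$. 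You instead pass to the limit directly in the cross term $\mathcal{G}_{\lambda_n}(u_n,v_n)=0$, which only involves the weight $(\lambda_n b+b_0)^{-}$, uniformly bounded by $\max\{-b_0,0\}$; this makes the limit a routine dominated-convergence computation given the strong $H^2(\mathbb{R}^N)$ convergence, and your explicit tracking of the signs $\sigma_1,\sigma_2$ is cleaner than the paper's tacit normalization $u(\lambda_n)=\phi_1+o_n(1)$. Your preliminary observation that the Euler--Lagrange identity extends by linearity to all of $\mathcal{N}_{\lambda,1}$, so that $\mathcal{G}_\lambda$ is positive definite there and Gram--Schmidt applies, is a point the paper leaves implicit but uses; making it explicit is a genuine improvement in rigor. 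In short: same skeleton, but your final limit argument is somewhat more economical because it avoids the weighted-$L^2$ convergence with the unbounded weight.
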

\begin{proof}
Suppose on the contrary that there exists $\{\lambda_n\}$ satisfying $\lambda_n\to+\infty$ as $n\to\infty$ such that $\mathcal{N}_{\lambda_n,1}\not=\text{span}\{e_1(\lambda_n)\}\subset \h$.  It follows that there exists $u(\lambda_n)\in\mathcal{N}_{\lambda_n,1}$ satisfying $u(\lambda_n)\not\in\text{span}\{e_1(\lambda_n)\}$ for all $n\in\bbn$.  Without loss of generality, we may assume that $\langle u(\lambda_n), e_1(\lambda_n)\rangle_{\lambda_n}=0$ for all $n\in\bbn$.  Similarity as in the proof of Lemma~\ref{lem0001}, going if necessary to a subsequence, we may get that $u(\lambda_n)=\phi_1+o_n(1)$ strongly in $\h$ and $\int_{\bbr^N}(\lambda_nb(x)+b_0)^+[u(\lambda_n)]^2dx=\int_{\Omega}\max\{b_0, 0\}\phi_1^2dx+o_n(1)$, which, together with  Lemma~\ref{lem0001}, implies that $\sqrt{(\lambda_nb(x)+b_0)^+}(u(\lambda_n)-e_1(\lambda_n))=o_n(1)$ in $L^2(\bbr^N)$.  It follows from a variant of the
Lebesgue dominated convergence theorem (cf. \cite[Theorem~2.2]{PK74}) that
\begin{equation*}
\|e_1(\lambda_n)-u(\lambda_n)\|_{\lambda_n}^2=o_n(1).
\end{equation*}
Therefore, we have
\begin{equation*}
0<2\beta_1^0=\lim_{n\to\infty}(\|e_1(\lambda_n)\|_{\lambda_n}^2+\|u(\lambda_n)\|_{\lambda_n}^2)
=\lim_{n\to\infty}\|e_1(\lambda_n)-u(\lambda_n)\|_{\lambda_n}^2=0,
\end{equation*}
this is a contradiction.
\end{proof}

Let
\begin{equation*}
\beta_2(\lambda)=\inf_{(\mathcal{M}_{\lambda}^{1})^{\perp}}\|u\|^2_{\lambda},
\end{equation*}
where $(\mathcal{M}_{\lambda}^1)^{\perp}=\{u\in\mathcal{M}_{\lambda}:\langle u,v\rangle_\lambda=0\text{ for all }v\in\mathcal{N}_{\lambda,1}\}$.  Then it is easy to see that $\beta_2(\lambda)\geq\beta_1(\lambda)$ for $\lambda>\Lambda_1$.  Thus, $\beta_2(\lambda)$ is well defined.  Furthermore, we have the following lemma.
\begin{lemma}\label{lem0003}
Suppose that the conditions $(B_1)$--$(B_3)$ hold and let $\beta_2^0$ be given in \eqref{eq02}.  If
\begin{equation*}
\lambda>\Lambda_2:=\frac{(\max\{-a_0, 0\}\beta_2^0)^2B_0^4-b_0}{b_\infty},
\end{equation*}
then $\beta_2(\lambda)$ can be attained for some $e_2(\lambda)\in E_{\lambda}$.  Moreover, $(e_2(\lambda), \beta_2(\lambda))$ satisfies \eqref{eq0022} and $(e_2(\lambda),\beta_2(\lambda))\to(e_2^0,\beta_2^0)$ strongly in $\h$ as $\lambda\to+\infty$ up to a subsequence for some $e_2^0\in\mathcal{N}_2$, where $\mathcal{N}_2$ is defined in \eqref{eq01}.
\end{lemma}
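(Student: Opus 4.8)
The plan is to run the same constrained-minimization scheme as in the proof of Lemma~\ref{lem0001}, but now on the closed subspace obtained by deleting the first eigendirection. By Lemma~\ref{lem0002} I may take $\lambda$ so large that $\mathcal{N}_{\lambda,1}=\text{span}\{e_1(\lambda)\}$, so that $(\mathcal{M}_\lambda^1)^\perp=\{u\in\mathcal{M}_\lambda:\langle u,e_1(\lambda)\rangle_\lambda=0\}$. Set $W_\lambda=\{u\in E_\lambda:\langle u,e_1(\lambda)\rangle_\lambda=0\}$, a closed subspace of $E_\lambda$ and hence itself a Hilbert space, with $E_\lambda=W_\lambda\oplus\text{span}\{e_1(\lambda)\}$ orthogonally in $\langle\cdot,\cdot\rangle_\lambda$. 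I would then minimize $\|u\|_\lambda^2$ over $\mathcal{M}_\lambda\cap W_\lambda$. Since $u,w\in W_\lambda$ implies $tu+lw\in W_\lambda$, the Ekeland argument and the implicit-function construction of $t_n(l)$ from Lemma~\ref{lem0001} carry over verbatim, producing a minimizing sequence $\{u_n\}$, the approximate identity $o_n(1)=\beta_2(\lambda)\mathcal{G}_\lambda(u_n,w)-\langle u_n,w\rangle_\lambda$ for every $w\in W_\lambda$, and a weak limit $e_2(\lambda)\in W_\lambda$.

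The first genuinely new point is the a priori bound $\beta_2(\lambda)\le\beta_2^0$, which must replace the single explicit comparison function $\phi_1$ used in \eqref{eq0016}. Here I would take $\psi\in\mathcal{N}_2\cap\mathcal{M}_0$ realizing $\beta_2^0$ in \eqref{eq02} and work in the two-dimensional comparison space $V=\text{span}\{\phi_1,\psi\}$, on whose elements (just as for $\phi_1$ in \eqref{eq0016}) the forms $\langle\cdot,\cdot\rangle_\lambda$ and $\mathcal{G}_\lambda$ reduce, since $b\equiv0$ on $\Omega$, to the $\lambda$-independent forms $\langle u,v\rangle_H$ and $\int_\Omega(\max\{-a_0,0\}\nabla u\nabla v+\max\{-b_0,0\}uv)dx$; these are simultaneously diagonalized by $\phi_1,\psi$, which are orthogonal both in $H$ and with respect to $\mathcal{G}_\lambda$. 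The single linear condition $\langle u,e_1(\lambda)\rangle_\lambda=0$ has a nontrivial solution $u=s\phi_1+t\psi\in V$, and for this $u$ the quotient $\|u\|_\lambda^2/\mathcal{G}_\lambda(u,u)=\frac{s^2\beta_1^0\mathcal{G}_\lambda(\phi_1,\phi_1)+t^2\beta_2^0\mathcal{G}_\lambda(\psi,\psi)}{s^2\mathcal{G}_\lambda(\phi_1,\phi_1)+t^2\mathcal{G}_\lambda(\psi,\psi)}$ is a convex combination of $\beta_1^0$ and $\beta_2^0$ with nonnegative weights. Since $\beta_1^0\le\beta_2^0$ (immediate from \eqref{eq02}, as $(\mathcal{M}^1)^\perp\subset\mathcal{M}_0$), it follows that $\beta_2(\lambda)\le\beta_2^0$. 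With this bound, the compactness step reproduces \eqref{eq0014} with $\beta_1(\lambda)$ replaced by $\beta_2(\lambda)$, whose coefficient $1-\frac{\beta_2(\lambda)\max\{-a_0,0\}B_0^2}{(\lambda b_\infty+b_0)^{\frac12}}$ is positive exactly when $\lambda>\Lambda_2$; hence $u_n\to e_2(\lambda)$ strongly in $E_\lambda$ and $\beta_2(\lambda)$ is attained.

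To upgrade the Euler--Lagrange identity from $W_\lambda$ to all of $E_\lambda$, so that $(e_2(\lambda),\beta_2(\lambda))$ solves \eqref{eq0022}, I would test the equation \eqref{eq0022} satisfied by $e_1(\lambda)$ against $e_2(\lambda)$: since $\langle e_1(\lambda),e_2(\lambda)\rangle_\lambda=0$ and $\beta_1(\lambda)>0$, this forces $\mathcal{G}_\lambda(e_1(\lambda),e_2(\lambda))=0$, so the identity holds for $w=e_1(\lambda)$, and thus for all $w\in E_\lambda$ by the orthogonal splitting. For the limit $\lambda\to+\infty$ I would follow the second half of Lemma~\ref{lem0001}: boundedness of $\{e_2(\lambda)\}$ in $\h$ from $\|e_2(\lambda)\|_\lambda^2=\beta_2(\lambda)\le\beta_2^0$ via \eqref{eq0020} and \eqref{eq0010}, a weak limit $e_2^*$, then \eqref{eq0007}-type Fatou and steep-well estimates giving $e_2^*\in H_0^1(\Omega)$, strong convergence in $L^2(\bbrn)\cap H^1(\bbrn)$, and $e_2^*\in\mathcal{M}_0$. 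The extra step is to pass the constraint $\langle e_2(\lambda),e_1(\lambda)\rangle_\lambda=0$ to the limit: using the strong $\h$ convergence $e_1(\lambda)\to\phi_1$ from Lemma~\ref{lem0001}, the weak $\h$ convergence of $e_2(\lambda)$ for the bi-Laplacian term, and a Cauchy--Schwarz splitting of the potential term over $\Omega$ and $\bbrn\setminus\Omega$ (where $\int_{\bbrn\setminus\Omega}(\lambda b(x)+b_0)^+[e_1(\lambda)]^2dx\to0$ since $\phi_1\equiv0$ off $\Omega$), I obtain $\langle e_2^*,\phi_1\rangle_H=0$, i.e. $e_2^*\in(\mathcal{M}^1)^\perp$. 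Then $\liminf_{\lambda\to+\infty}\beta_2(\lambda)\ge\int_\Omega(|\Delta e_2^*|^2+\max\{a_0,0\}|\nabla e_2^*|^2+\max\{b_0,0\}|e_2^*|^2)dx\ge\beta_2^0$ by \eqref{eq02}, which together with $\beta_2(\lambda)\le\beta_2^0$ yields $\beta_2(\lambda)\to\beta_2^0$ and shows $e_2^*$ attains $\beta_2^0$, so $e_2^*\in\mathcal{N}_2$; the norm convergence then forces $\Delta e_2(\lambda)\to\Delta e_2^*$ strongly as in \eqref{eq0021}, giving $e_2(\lambda)\to e_2^*=:e_2^0$ strongly in $\h$.

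I expect the main obstacles to be the exact a priori bound $\beta_2(\lambda)\le\beta_2^0$ and the transfer of the orthogonality constraint to the limit. The convex-combination observation on the two-dimensional space $V$ (rather than a single comparison function) is precisely what makes the bound hold for every admissible $\lambda$ rather than only asymptotically, and the potential term in the limiting orthogonality is where the steep-well hypotheses $(B_1)$--$(B_3)$ are essential; once $e_2^*\in(\mathcal{M}^1)^\perp$ is secured, the remaining sandwich and strong-convergence arguments are routine repetitions of Lemma~\ref{lem0001}.
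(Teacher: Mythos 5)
Your proposal is correct and shares the paper's overall architecture (Ekeland minimization on the constrained set, strong convergence via the coefficient $1-\beta_2(\lambda)\max\{-a_0,0\}B_0^2(\lambda b_\infty+b_0)^{-1/2}$, then identification of the $\lambda\to+\infty$ limit), but it diverges at the one step that actually carries the weight: the a priori bound on $\beta_2(\lambda)$. The paper's Step~1 fixes $\varphi\in\mathcal{N}_2$, splits it as $\varphi=\varphi_\lambda^-+\varphi_\lambda^+$ with $\varphi_\lambda^-$ the projection onto $\mathcal{N}_{\lambda,1}$, shows $\|\varphi_\lambda^-\|_\lambda\to0$ using Lemmas~\ref{lem0001}--\ref{lem0002}, and obtains only the asymptotic bound $\limsup_{\lambda\to+\infty}\beta_2(\lambda)\le\beta_2^0$; it then invokes this to make the compactness coefficient positive ``for $\lambda>\Lambda_2$,'' which for a fixed finite $\lambda$ does not literally follow from a $\limsup$ statement. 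Your two-dimensional convex-combination argument on $V=\mathrm{span}\{\phi_1,\psi\}$, exploiting the simultaneous $\mathcal{G}_\lambda$- and $H$-orthogonality of eigenfunctions with distinct eigenvalues, yields the pointwise inequality $\beta_2(\lambda)\le\beta_2^0$ and so genuinely justifies the threshold $\Lambda_2$; the price is that you need $\mathcal{N}_{\lambda,1}=\mathrm{span}\{e_1(\lambda)\}$, i.e.\ $\lambda>\Lambda_1^*$, so strictly you prove attainment for $\lambda>\max\{\Lambda_2,\Lambda_1^*\}$ (if $\dim\mathcal{N}_{\lambda,1}>1$ the single linear constraint on a two-dimensional $V$ no longer suffices). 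You also supply a step the paper silently skips: upgrading the Euler--Lagrange identity from test functions orthogonal to $\mathcal{N}_{\lambda,1}$ to all of $E_\lambda$ by testing $e_1(\lambda)$'s equation against $e_2(\lambda)$ to get $\mathcal{G}_\lambda(e_1(\lambda),e_2(\lambda))=0$. In Step~3 your route (pass the constraint to the limit, conclude $e_2^*\in(\mathcal{M}^1)^\perp$, then sandwich) and the paper's (exclude $e_2^0\in\mathcal{N}_1$ by computing $\lim_\lambda\langle u_2(\lambda),e_1(\lambda)\rangle_\lambda$) are equivalent reorganizations of the same information. Net effect: your version is tighter where the paper is loose, at the cost of a slightly larger (but in context harmless) lower bound on $\lambda$.
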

\begin{proof}
For the sake of clarity, the proof will be performed through the following three steps.

{\bf Step.~1}\quad We prove that $\limsup_{\lambda\to+\infty}\beta_2(\lambda)\leq\beta_2^0$.

Let $\varphi\in\mathcal{N}_2$.  Then $\varphi=\varphi^-_\lambda+\varphi_\lambda^+$, where $\varphi^-_\lambda$ and $\varphi_\lambda^+$ are the projections of $\varphi$ in $\mathcal{N}_{\lambda,1}$ and $(\mathcal{M}_{\lambda}^{1})^{\perp}$ respectively.  It follows from $\mathcal{N}_2\subset(\mathcal{M}^{1})^{\perp}$ and Lemmas~\ref{lem0001} and \ref{lem0002} that $\lim_{\lambda\to+\infty}\|\varphi^-_\lambda\|_{\lambda}^2=\lim_{\lambda\to+\infty}\langle \varphi^-_\lambda, \varphi\rangle_{\lambda}=0$ up to a subsequence. By \eqref{eq0010}--\eqref{eq0011}, $\lim_{\lambda\to+\infty}\mathcal{G}_\lambda(\varphi^-_\lambda,\varphi^-_\lambda)=0$ up to a subsequence.
Now, using the definitions of $\beta_2(\lambda)$ and $\beta_2^0$, the conditions $(B_1)$--$(B_3)$ and Lemmas~\ref{lem0001}--\ref{lem0002}, we have
\begin{eqnarray*}
\limsup_{\lambda\to+\infty}\beta_2(\lambda)&\leq&\limsup_{\lambda\to+\infty}
\frac{\|\varphi_\lambda^+\|_{\lambda}^2}{\mathcal{G}_\lambda(\varphi_\lambda^+,\varphi_\lambda^+)}\\
&=&\limsup_{\lambda\to+\infty}
\frac{\|\varphi-\varphi_\lambda^-\|_{\lambda}^2}
{\mathcal{G}_\lambda(\varphi-\varphi_\lambda^-,\varphi-\varphi_\lambda^-)}\\
&=&\limsup_{\lambda\to+\infty}
\frac{\|\varphi\|_{\lambda}^2-2\langle\varphi, \varphi_\lambda^-\rangle_{\lambda}+\|\varphi_\lambda^-\|_{\lambda}^2}
{\mathcal{G}_\lambda(\varphi, \varphi)-2\mathcal{G}_\lambda(\varphi,\varphi_\lambda^-)+\mathcal{G}_\lambda(\varphi_\lambda^-,\varphi_\lambda^-)}\\
&=&\frac{\int_{\Omega}|\Delta \varphi|^2+\max\{a_0, 0\}|\nabla \varphi|^2+\max\{b_0,0\}|\varphi|^2dx}{\int_{\Omega}\max\{-a_0,0\}|\nabla \varphi|^2+\max\{-b_0,0\}|\varphi|^2dx}\\
&=&\beta_2^0.
\end{eqnarray*}

{\bf Step.~2}\quad We prove that for $\lambda>\Lambda_2$, $\beta_2(\lambda)$ can be attained by some $e_2(\lambda)\in\h$ and $(e_2(\lambda), \beta_2(\lambda))$ satisfies \eqref{eq0022}.

Indeed, by a similar argument  used in the proof of Lemma~\ref{lem0001}, we can show that there exists $\{u_n\}\subset(\mathcal{M}^{1}_\lambda)^{\perp}$ such that
\begin{enumerate}
\item[$(1)$] $\|u_n\|_{\lambda}^2=\beta_2(\lambda)+o_n(1)$;
\item[$(2)$] $o_n(1)=\beta_2(\lambda)\mathcal{G}_\lambda(u_n,w)-\langle u_n,w\rangle_{\lambda}$ for all $w\in(\mathcal{M}^{1}_\lambda)^{\perp}$.
\end{enumerate}
Clearly, $(1)$ gives the boundedness of $\{u_n\}$ in $E_{\lambda}$, hence we may assume that $u_n\rightharpoonup e_2(\lambda)$ weakly in $E_{\lambda}$ as $n\to\infty$. Since $\{u_n\}\subset(\mathcal{M}^{1}_\lambda)^{\perp}$, we must have $e_2(\lambda)\in(\mathcal{M}^{1}_\lambda)^{\perp}$.  By $(2)$, we obtain
\begin{equation*}
0=\beta_2(\lambda)\mathcal{G}_\lambda(e_2(\lambda),e_2(\lambda))-\|e_2(\lambda)\|_{\lambda}^2,
\end{equation*}
which, together with $(1)$ and similar arguments  in the proof of  \eqref{eq0014}, implies
\begin{eqnarray*}
o_n(1)\geq\bigg(1-\frac{\beta_2(\lambda)\max\{-a_0, 0\}B_0^2}{(\lambda b_\infty+b_0)^{\frac12}}\bigg)\|u_n-u_k(\lambda)\|_{\lambda}^2+o_n(1).
\end{eqnarray*}
It follows from Step.~1 that $u_n\to e_2(\lambda)$ strongly in $E_{\lambda}$ as $n\to\infty$ for $\lambda>\Lambda_2$.  Hence, by $(1)$--$(2)$, $\beta_2(\lambda)$ is attained by $e_2(\lambda)$ for $\lambda>\Lambda_2$ and $(e_2(\lambda), \beta_2(\lambda))$ satisfies \eqref{eq0022}.

{\bf Step.~3}\quad We prove that $(u_2(\lambda),\beta_2(\lambda))\to(e_2^0,\beta_2^0)$ strongly in $\h$ as $\lambda\to+\infty$ up to a subsequence for some $e_2^0\in\mathcal{N}_2$.

Indeed, similarly  as in the proof of Lemma~\ref{lem0001}, we can show that $(u_2(\lambda),\beta_2(\lambda))\to(e_2^0,\beta_2^0)$ strongly in $H^1(\bbr^N)\cap L^2(\bbr^N)$ as $\lambda\to+\infty$ up to a subsequence for some $e_2^0\in H$.  It follows from the conditions $(B_1)$--$(B_3)$ and a variant of the
Lebesgue dominated convergence theorem (cf. \cite[Theorem~2.2]{PK74}) that $e_2^0\in\mathcal{M}_0$.
By Step.~1, we see that either $e_2^0\in\mathcal{N}_1\cap \mathcal{M}_0$ or $e_2^0\in\mathcal{N}_2\cap \mathcal{M}_0$.  If $e_2^0\in\mathcal{N}_1\cap \mathcal{M}_0$ then by Lemma~\ref{lem0001}, \eqref{eq0021} and the H\"older inequality, we have
\begin{equation*}
0=\lim_{\lambda\to+\infty}\langle u_2(\lambda), e_1(\lambda)\rangle_\lambda=\int_{\Omega}\big(|\Delta \phi_1|^2+\max\{a_0,0\}|\nabla \phi_1|^2+\max\{b_0,0\}|\phi_1|^2\big)dx.
\end{equation*}
It is impossible.  Thus, we must have $e_2^0\in\mathcal{N}_2\cap \mathcal{M}_0$, which implies that $\liminf_{\lambda\to+\infty}\beta_2(\lambda)\geq\beta_2^0$, this, together with
Step.~1, yields that $\lim_{\lambda\to+\infty}\beta_2(\lambda)=\beta_2^0$ and $e_2^0$ attains $\beta_2^0$.  Now, by a similar argument used in the proof of Lemma~\ref{lem0001}, we get that that $u_2(\lambda)\to e_2^0$ strongly in $\h$ as $\lambda\to+\infty$ up to a subsequence.
\end{proof}

Let $\mathcal{N}_{\lambda,2}=\text{span}\{u\in \mathcal{M}_\lambda:\|u\|_\lambda^2=\beta_2(\lambda)\}$.  Then by Lemma~\ref{lem0003}, we can see that $e_2(\lambda)\in\mathcal{N}_{\lambda,2}$ for $\lambda>\Lambda_2$.  Moreover, we also have the following.
\begin{lemma}\label{lem0004}
Suppose that the conditions $(B_1)$--$(B_3)$ hold.  Then there exists $\Lambda_2^*\geq\Lambda_2$ such that $\beta_1(\lambda)<\beta_2(\lambda)$, $\mathcal{N}_{\lambda,1}\perp\mathcal{N}_{\lambda,2}$ and dim$(\mathcal{N}_{\lambda,2})\leq$dim$(\mathcal{N}_2)$ for $\lambda>\Lambda_2^*$.  Here, we say $\mathcal{N}_{\lambda,1}\perp\mathcal{N}_{\lambda,2}$ in the sense that $\langle u, v\rangle_\lambda=0$ for all $u\in\mathcal{N}_{\lambda,1}$ and $v\in\mathcal{N}_{\lambda,2}$.
\end{lemma}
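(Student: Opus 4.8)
The plan is to treat Lemma~\ref{lem0004} as three statements about the generalized eigenvalue problem \eqref{eq0022}, whose weak form reads $\langle u,w\rangle_\lambda=\beta\,\mathcal{G}_\lambda(u,w)$ for all $w\in E_\lambda$, and to prove them in the order: the strict gap $\beta_1(\lambda)<\beta_2(\lambda)$, then the orthogonality $\mathcal{N}_{\lambda,1}\perp\mathcal{N}_{\lambda,2}$, then the dimension bound. Throughout I regard the generators of $\mathcal{N}_{\lambda,1}$ and $\mathcal{N}_{\lambda,2}$ as minimizers of $\beta_1(\lambda)$ and $\beta_2(\lambda)$, so that, exactly as in Lemmas~\ref{lem0001} and~\ref{lem0003}, each of them solves \eqref{eq0022} with the corresponding value of $\beta$; the threshold $\Lambda_2^*$ will be produced by the last, analytic, step, while the first two hold already for $\lambda>\max\{\Lambda_1^*,\Lambda_2\}$.

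To get the gap I would argue directly. Suppose $\beta_1(\lambda)=\beta_2(\lambda)$ and let $v$ be a minimizer for $\beta_2(\lambda)$ with $\mathcal{G}_\lambda(v,v)=1$. Then $v\in(\mathcal{M}_\lambda^1)^\perp\subset\mathcal{M}_\lambda$ and $\|v\|_\lambda^2=\beta_2(\lambda)=\beta_1(\lambda)=\inf_{\mathcal{M}_\lambda}\|\cdot\|_\lambda^2$, so $v$ is a global minimizer over $\mathcal{M}_\lambda$ and hence $v\in\mathcal{N}_{\lambda,1}$ by the very definition of $\mathcal{N}_{\lambda,1}$. But $v\in(\mathcal{M}_\lambda^1)^\perp$ forces $\langle v,u\rangle_\lambda=0$ for every $u\in\mathcal{N}_{\lambda,1}$, in particular $\|v\|_\lambda^2=\langle v,v\rangle_\lambda=0$, which contradicts $\mathcal{G}_\lambda(v,v)=1$. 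Hence $\beta_1(\lambda)<\beta_2(\lambda)$ for all $\lambda>\Lambda_2$.

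The orthogonality is then the classical fact that eigenfunctions for distinct eigenvalues are orthogonal. Take $e_1(\lambda)\in\mathcal{N}_{\lambda,1}$ and any $v\in\mathcal{N}_{\lambda,2}$; both satisfy the weak form of \eqref{eq0022}, the first with $\beta_1(\lambda)$ and the second with $\beta_2(\lambda)$. Testing the first against $v$, the second against $e_1(\lambda)$, and subtracting, the symmetry of $\langle\cdot,\cdot\rangle_\lambda$ and of $\mathcal{G}_\lambda$ gives $(\beta_1(\lambda)-\beta_2(\lambda))\mathcal{G}_\lambda(e_1(\lambda),v)=0$. Since $\beta_1(\lambda)\neq\beta_2(\lambda)$, we obtain $\mathcal{G}_\lambda(e_1(\lambda),v)=0$ and then $\langle e_1(\lambda),v\rangle_\lambda=\beta_1(\lambda)\mathcal{G}_\lambda(e_1(\lambda),v)=0$. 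Because $\mathcal{N}_{\lambda,1}=\text{span}\{e_1(\lambda)\}$ for $\lambda>\Lambda_1^*$ by Lemma~\ref{lem0002}, this yields $\mathcal{N}_{\lambda,1}\perp\mathcal{N}_{\lambda,2}$.

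The dimension bound is the delicate part, and I would establish it by contradiction along a sequence $\lambda_n\to+\infty$. If $\dim(\mathcal{N}_{\lambda_n,2})>\dim(\mathcal{N}_2)=:d$ for every $n$, choose a $\mathcal{G}_{\lambda_n}$-orthonormal family $\{v_1(\lambda_n),\dots,v_{d+1}(\lambda_n)\}\subset\mathcal{N}_{\lambda_n,2}$ of minimizers (possible since $\mathcal{G}_{\lambda_n}$ is positive definite on the eigenspace). Each $v_i(\lambda_n)$ solves \eqref{eq0022} with $\beta_2(\lambda_n)$ and lies in $(\mathcal{M}_{\lambda_n}^1)^\perp$, so I can repeat verbatim the compactness scheme of Lemma~\ref{lem0003}: boundedness in $\h$ (from $\|v_i(\lambda_n)\|_{\lambda_n}^2=\beta_2(\lambda_n)$ together with Step~1 of Lemma~\ref{lem0003} and the estimates \eqref{eq0020}, \eqref{eq0010}), strong convergence in $L^2(\bbrn)\cap H^1(\bbrn)$ via the steep well and the variant of the dominated convergence theorem \cite[Theorem~2.2]{PK74}, and finally strong $\h$ convergence from the equation, so that up to a subsequence $v_i(\lambda_n)\to v_i^0$ strongly in $\h$ with $v_i^0\in\mathcal{N}_2$ (the orthogonality to $e_1(\lambda_n)$ excludes the $\mathcal{N}_1$ component, exactly as in Step~3 of Lemma~\ref{lem0003}). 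Strong convergence transports the $\mathcal{G}_{\lambda_n}$-orthonormality to $\int_\Omega(\max\{-a_0,0\}\nabla v_i^0\nabla v_j^0+\max\{-b_0,0\}v_i^0v_j^0)\,dx=\delta_{ij}$, so the $v_i^0$ are $d+1$ linearly independent elements of $\mathcal{N}_2$, contradicting $\dim(\mathcal{N}_2)=d$. This produces $\Lambda_2^*\geq\max\{\Lambda_1^*,\Lambda_2\}$ with $\dim(\mathcal{N}_{\lambda,2})\leq\dim(\mathcal{N}_2)$ for $\lambda>\Lambda_2^*$, and together with the two previous steps it finishes the lemma. I expect the main obstacle to be precisely this last step: one must be sure that an entire orthonormal basis of $\mathcal{N}_{\lambda_n,2}$ converges simultaneously to a family that stays linearly independent in the finite-dimensional limit space $\mathcal{N}_2$, i.e. that no dimension collapses in the limit, and it is the strong $\h$ convergence, not merely weak convergence, that rules this out.
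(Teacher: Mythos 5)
Your proof is correct, and its core — the dimension bound — follows essentially the same route as the paper: take a $\lambda$-orthogonal (in your case $\mathcal{G}_\lambda$-orthonormal) family in $\mathcal{N}_{\lambda,2}$, push it to the limit $\lambda\to+\infty$ via the compactness scheme of Lemma~\ref{lem0003}, and observe that strong convergence in $\h$ preserves orthogonality, so the limits form a linearly independent family in $\mathcal{N}_2$; the paper does this with just two non-colinear elements and the same contradiction ``$0<\beta_2^0=0$'' from Lemma~\ref{lem0002} to exclude coincident limits, while you run it with $d+1$ elements, which is the same idea spelled out. Where you genuinely diverge is in the first two assertions. The paper obtains $\beta_1(\lambda)<\beta_2(\lambda)$ asymptotically, from $\beta_1(\lambda)\to\beta_1^0$ and $\beta_2(\lambda)\to\beta_2^0$, which tacitly needs $\beta_1^0<\beta_2^0$; your argument (a $\beta_2$-minimizer lying in $(\mathcal{M}_\lambda^1)^\perp$ cannot simultaneously generate $\mathcal{N}_{\lambda,1}$ without forcing $\|v\|_\lambda^2=\langle v,v\rangle_\lambda=0$, contradicting $\mathcal{G}_\lambda(v,v)=1$) is purely algebraic, valid for every $\lambda>\Lambda_2$ at which $\beta_2(\lambda)$ is attained, and does not rely on the limiting eigenvalues being distinct — a real gain in robustness. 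Likewise you derive $\mathcal{N}_{\lambda,1}\perp\mathcal{N}_{\lambda,2}$ from the classical ``eigenfunctions of distinct eigenvalues of \eqref{eq0022} are $\mathcal{G}_\lambda$-orthogonal'' computation, whereas the paper reads it off from the construction. One caveat you share with the paper: the set $\{u\in\mathcal{M}_\lambda:\|u\|_\lambda^2=\beta_2(\lambda)\}$ whose span defines $\mathcal{N}_{\lambda,2}$ is not literally required to lie in $(\mathcal{M}_\lambda^1)^\perp$ or to solve \eqref{eq0022}; you flag this by declaring at the outset that you treat the generators as minimizers, which is the reading the paper itself adopts, so this is not a gap relative to the paper's own argument.
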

\begin{proof}
Let $u(\lambda), v(\lambda)\in \mathcal{N}_{\lambda,2}$ with $u(\lambda)\not\in\text{span}\{v(\lambda)\}$.  Without loss of generality, we may assume that $\langle u(\lambda), v(\lambda)\rangle_\lambda=0$.  By Lemma~\ref{lem0003}, we can see that $u(\lambda)\to e'$ and $v(\lambda)\to e''$ strongly in $\h$ as $\lambda\to+\infty$ up to a subsequence for some $e',e''\in\mathcal{N}_2$.  If $e'=e''$, then by a similar argument used in the proof of Lemma~\ref{lem0002}, we can show that $0<\beta_2^0=0$, which is a contradiction.  Hence, we must have $\langle e', e''\rangle_{H}=0$ since $\langle u(\lambda), v(\lambda)\rangle_\lambda=0$.  It follows from Lemmas~\ref{lem0001} and \ref{lem0003} that there exists $\Lambda_2^*\geq\Lambda_2$ such that $\beta_1(\lambda)<\beta_2(\lambda)$, $\mathcal{N}_{\lambda,1}\perp\mathcal{N}_{\lambda,2}$ and dim$(\mathcal{N}_{\lambda,2})\leq$dim$(\mathcal{N}_2)$ for $\lambda>\Lambda_2^*$.
\end{proof}

Now, define $\beta_k(\lambda)$ as
\begin{equation*}
\beta_k(\lambda)=\inf_{(\mathcal{M}_{\lambda}^{k-1})^{\perp}}\|u\|^2_{\lambda}\quad(k=3,4,\cdots),
\end{equation*}
where $(\mathcal{M}_{\lambda}^{k-1})^{\perp}=\{u\in\mathcal{M}_{\lambda}:\langle u,v\rangle_\lambda=0\text{ for all }v\in\bigoplus_{i=1}^{k-1}\mathcal{N}_{\lambda,i}\}$ and $\mathcal{N}_{\lambda,i}=\text{span}\{u\in \mathcal{M}_\lambda:\|u\|_\lambda^2=\beta_i(\lambda)\}$, then by iterating, we can obtain the following lemma.
\begin{lemma}\label{lem0005}
Suppose that the conditions $(B_1)$--$(B_3)$ hold and $k\in\bbn$ with $k\geq3$.
\begin{enumerate}
\item[$(1)$] If $\lambda>\Lambda_k:=\frac{(\max\{-a_0, 0\}\beta_k^0)^2B_0^4-b_0}{b_\infty}$, then $\beta_k(\lambda)$ is well defined and can be attained for some $e_k(\lambda)\in E_{\lambda}$.  Moreover, $(e_k(\lambda), \beta_k(\lambda))$ satisfies \eqref{eq0022} and $(e_k(\lambda),\beta_k(\lambda))\to(e_k^0,\beta_k^0)$ strongly in $\h$ as $\lambda\to+\infty$ up to a subsequence for some $e_k^0\in\mathcal{N}_k$.
\item[$(2)$] There exists $\Lambda_k^*\geq\Lambda_k$ such that $\beta_{k-1}(\lambda)<\beta_k(\lambda)$, $\bigoplus_{i=1}^{k-1}\mathcal{N}_{\lambda,i}\perp\mathcal{N}_{\lambda,k}$ and dim$(\mathcal{N}_{\lambda,k})\leq$dim$(\mathcal{N}_k)$ for $\lambda>\Lambda_k^*$.
\end{enumerate}
\end{lemma}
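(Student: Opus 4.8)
The plan is to argue by induction on $k$, with Lemmas~\ref{lem0001}--\ref{lem0004} serving as the base cases $k=1,2$; the inductive step reproduces, level by level, the three-step scheme of Lemma~\ref{lem0003} together with the comparison argument of Lemma~\ref{lem0004}. So I assume that $(1)$ and $(2)$ already hold for all indices $\le k-1$: each pair $(e_i(\lambda),\beta_i(\lambda))$ converges strongly in $\h$ to $(e_i^0,\beta_i^0)$ with $e_i^0\in\mathcal{N}_i$, the subspaces $\mathcal{N}_{\lambda,i}$ are mutually $\langle\cdot,\cdot\rangle_\lambda$-orthogonal with $\dim\mathcal{N}_{\lambda,i}\le\dim\mathcal{N}_i$, and $\beta_1(\lambda)<\cdots<\beta_{k-1}(\lambda)$ for $\lambda$ large. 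Throughout I use that $\beta_1^0<\beta_2^0<\cdots$ strictly, which follows from the characterization that $\beta_j^0$ is attained \emph{only} on $\mathcal{N}_j\cap\mathcal{M}_0$: were $\beta_{j}^0=\beta_{j+1}^0$, a minimizer at level $j+1$ would attain $\beta_j^0$ while lying outside $\mathcal{N}_j$, a contradiction.

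The first task is to establish the a priori bound $\beta_k(\lambda)\le\beta_k^0$ for $\lambda>\Lambda_k$, the analogue of \eqref{eq0016}. I would obtain it by a dimension count: the competitor space $\bigoplus_{i=1}^{k}\mathcal{N}_i\subset H\hookrightarrow E_\lambda$ has dimension $\sum_{i=1}^k\dim\mathcal{N}_i$, while $\langle\cdot,\cdot\rangle_\lambda$-orthogonality to $\bigoplus_{i=1}^{k-1}\mathcal{N}_{\lambda,i}$ imposes at most $\sum_{i=1}^{k-1}\dim\mathcal{N}_i$ linear conditions, so a nonzero $\varphi\in(\mathcal{M}_\lambda^{k-1})^{\perp}\cap\bigoplus_{i=1}^k\mathcal{N}_i$ exists. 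Since $\varphi$ is supported in $\overline{\Omega}$ where $b\equiv0$, its Rayleigh quotient $\|\varphi\|_\lambda^2/\mathcal{G}_\lambda(\varphi,\varphi)$ is a weighted average of $\beta_1^0,\dots,\beta_k^0$, hence $\le\beta_k^0$ by the monotonicity above. This also shows $(\mathcal{M}_\lambda^{k-1})^{\perp}\neq\varnothing$, so $\beta_k(\lambda)$ is well defined, and with $\beta_k(\lambda)\ge\beta_{k-1}(\lambda)>0$ it is a genuine minimization problem. Attainment then follows exactly as in Step~2 of Lemma~\ref{lem0003}: an Ekeland minimizing sequence $\{u_n\}\subset(\mathcal{M}_\lambda^{k-1})^{\perp}$ satisfies $o_n(1)=\beta_k(\lambda)\mathcal{G}_\lambda(u_n,w)-\langle u_n,w\rangle_\lambda$ for all admissible $w$, its weak limit $e_k(\lambda)$ stays in $(\mathcal{M}_\lambda^{k-1})^{\perp}$, and the coercivity estimate modelled on \eqref{eq0014} has coefficient $1-\beta_k(\lambda)\max\{-a_0,0\}B_0^2(\lambda b_\infty+b_0)^{-1/2}$, which is positive once $\lambda>\Lambda_k$ precisely because $\beta_k(\lambda)\le\beta_k^0$ and $\Lambda_k$ is chosen so that $(\lambda b_\infty+b_0)^{1/2}>\max\{-a_0,0\}\beta_k^0 B_0^2$. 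Hence $u_n\to e_k(\lambda)$ strongly in $E_\lambda$, $\beta_k(\lambda)$ is attained, and $(e_k(\lambda),\beta_k(\lambda))$ solves \eqref{eq0022}.

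For the concentration claim I would follow Step~3 of Lemma~\ref{lem0003}. The bound $\|e_k(\lambda)\|_\lambda^2\le\beta_k^0$ with \eqref{eq0020} and \eqref{eq0010} makes $\{e_k(\lambda)\}$ bounded in $\h$; extracting a weak limit $e_k^0$, the Fatou argument of \eqref{eq0007} forces $e_k^0\in H_0^1(\Omega)$, and the variant of dominated convergence \cite[Theorem~2.2]{PK74} upgrades this to $e_k(\lambda)\to e_k^0$ strongly in $H^1(\bbrn)\cap L^2(\bbrn)$ and to $e_k^0\in\mathcal{M}_0$. Passing the relations $\langle e_k(\lambda),e_i(\lambda)\rangle_\lambda=0$ to the limit and using $e_i(\lambda)\to e_i^0\in\mathcal{N}_i$ gives $e_k^0\perp_H\bigoplus_{i=1}^{k-1}\mathcal{N}_i$, so $e_k^0\in(\mathcal{M}^{k-1})^{\perp}$; combined with $\liminf_{\lambda\to+\infty}\beta_k(\lambda)\ge\|e_k^0\|_H^2\ge\beta_k^0$ and the upper bound, we obtain $\lim_{\lambda\to+\infty}\beta_k(\lambda)=\beta_k^0$ with $e_k^0$ a minimizer, whence $e_k^0\in\mathcal{N}_k$. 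The $L^2$- and $H^1$-convergence then forces $\|\Delta e_k(\lambda)\|_{L^2(\bbrn)}\to\|\Delta e_k^0\|_{L^2(\Omega)}$ as in \eqref{eq0021}, upgrading the convergence to strong convergence in $\h$.

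Part $(2)$ is the iterate of Lemma~\ref{lem0004}: the strict inequality $\beta_{k-1}(\lambda)<\beta_k(\lambda)$ for large $\lambda$ is immediate from $\beta_{k-1}(\lambda)\to\beta_{k-1}^0<\beta_k^0\leftarrow\beta_k(\lambda)$. For the orthogonality $\bigoplus_{i=1}^{k-1}\mathcal{N}_{\lambda,i}\perp\mathcal{N}_{\lambda,k}$ and the dimension bound I would take any two $\langle\cdot,\cdot\rangle_\lambda$-orthogonal minimizers in $\mathcal{N}_{\lambda,k}$; by the concentration argument each converges in $\h$ to an element of $\mathcal{N}_k$, their limits cannot coincide (otherwise the non-collapse computation of Lemma~\ref{lem0002} yields $0<2\beta_k^0=0$), and since these limits lie in $\mathcal{N}_k$ while those of $\mathcal{N}_{\lambda,i}$ ($i<k$) lie in $\mathcal{N}_i$, all cross inner products tend to zero, giving exact $\langle\cdot,\cdot\rangle_\lambda$-orthogonality and injectivity of the limit map, hence $\dim\mathcal{N}_{\lambda,k}\le\dim\mathcal{N}_k$, for $\lambda>\Lambda_k^*$. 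The hard part will be the concentration step: one must rule out that $e_k^0$ collapses into a \emph{lower} eigenspace $\mathcal{N}_i$ with $i<k$, which is exactly where the limit orthogonality to all the previous $e_i^0$ must be combined with the matching two-sided bound $\beta_{k-1}^0<\beta_k(\lambda)\le\beta_k^0$; the same non-collapse phenomenon, propagated uniformly across the now possibly multidimensional spaces $\mathcal{N}_{\lambda,k}$, is what secures the dimension estimate in $(2)$.
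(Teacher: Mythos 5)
The paper's own ``proof'' of Lemma~\ref{lem0005} is the single phrase ``by iterating,'' so your writeup is essentially an explicit version of what the authors leave implicit, and it does follow their scheme: the three steps of Lemma~\ref{lem0003} for part $(1)$ and the comparison argument of Lemma~\ref{lem0004} for part $(2)$, organized as an induction on $k$. The one place where you genuinely depart from the paper is the a priori bound $\beta_k(\lambda)\le\beta_k^0$: for $k=2$ the paper takes a fixed $\varphi\in\mathcal{N}_2$ and shows that its projection onto $\mathcal{N}_{\lambda,1}$ vanishes as $\lambda\to+\infty$, which only yields $\limsup_{\lambda\to+\infty}\beta_2(\lambda)\le\beta_2^0$; you instead produce, by a dimension count against $\bigoplus_{i=1}^{k}\mathcal{N}_i$, a competitor supported in $\overline{\Omega}$ for each fixed $\lambda$, whose Rayleigh quotient is a weighted average of $\beta_1^0,\dots,\beta_k^0$. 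This buys a bound that is pointwise in $\lambda$, which is what the coercivity estimate modelled on \eqref{eq0014} actually requires for every $\lambda>\Lambda_k$ (the paper's limsup formally controls only large $\lambda$), so your variant is if anything tighter than the original. Note that the weighted-average step uses the strict monotonicity $\beta_1^0<\cdots<\beta_k^0$, which you correctly extract from the paper's assertion that $\beta_j^0$ is attained exactly on $\mathcal{N}_j\cap\mathcal{M}_0$.

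The step you yourself flag as ``the hard part'' is indeed where the argument is not closed, and it is worth being precise about why. You pass the orthogonality $\langle e_k(\lambda),v\rangle_\lambda=0$ for $v\in\bigoplus_{i=1}^{k-1}\mathcal{N}_{\lambda,i}$ to the limit and conclude $e_k^0\perp_H\bigoplus_{i=1}^{k-1}\mathcal{N}_i$, hence $e_k^0\in(\mathcal{M}^{k-1})^{\perp}$ and $\|e_k^0\|_{H}^2\ge\beta_k^0$. But the limits of (bases of) $\mathcal{N}_{\lambda,i}$ span only a subspace of $\mathcal{N}_i$ of dimension $\dim\mathcal{N}_{\lambda,i}$, and the inductive hypothesis gives only $\dim\mathcal{N}_{\lambda,i}\le\dim\mathcal{N}_i$. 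If this inequality is strict for some $i<k$ --- which nothing in the induction excludes once $\dim\mathcal{N}_i>1$ --- then $e_k^0$ is orthogonal only to a proper subspace of $\mathcal{N}_i$, the inequality $\|e_k^0\|_{H}^2\ge\beta_k^0$ does not follow, and the scenario $e_k^0\in\mathcal{N}_i$ with $\limsup_{\lambda\to+\infty}\beta_k(\lambda)\le\beta_i^0<\beta_k^0$ is not ruled out. For $k=2$ the paper escapes this because $\dim\mathcal{N}_1=1$ and Lemma~\ref{lem0002} gives $\mathcal{N}_{\lambda,1}=\mathrm{span}\{e_1(\lambda)\}$ exactly; for $k\ge3$ one needs the matching lower bound $\dim\mathcal{N}_{\lambda,i}\ge\dim\mathcal{N}_i$ (obtainable, for instance, from a min--max count showing that $\bigoplus_{i\le j}\mathcal{N}_{\lambda,i}$ must absorb all of $\bigoplus_{i\le j}\mathcal{N}_i$ in the limit), and neither your proposal nor the paper supplies it. This is a gap relative to a complete proof, though one inherited from, rather than introduced into, the paper's one-line argument.
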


By Lemmas~\ref{lem0001}, \ref{lem0003} and \ref{lem0005}, $\bigoplus_{i=1}^{k_0^*-1}\mathcal{N}_{\lambda,i}$ and $(\mathcal{M}_{\lambda}^{k_0^*-1})^{\perp}$ are well defined for $\lambda>\Lambda^*_{k_0^*}$, where $k_0^*$ is given by \eqref{eq9998}.
\begin{lemma}\label{lem0006}
Suppose that the conditions $(B_1)$--$(B_3)$ hold.  If $\beta_{k_0^*-1}^0<1$,
then we have
\begin{enumerate}
\item[$(1)$] $\mathcal{D}_\lambda(u,u)\leq\bigg(1-\frac{1}{\beta_{k_0^*-1}(\lambda)}\bigg)\|u\|_{\lambda}^2
    =\bigg(1-\frac{1}{\beta_{k_0^*-1}^0}+o_\lambda(1)\bigg)\|u\|_{\lambda}^2$ in $\bigoplus_{i=1}^{k_0^*-1}\mathcal{N}_{\lambda,i}$;
\item[$(2)$] $\mathcal{D}_\lambda(u,u)\geq\bigg(1-\frac{1}{\beta_{k_0^*}(\lambda)}\bigg)\|u\|_{\lambda}^2
    =\bigg(1-\frac{1}{\beta_{k_0^*}^0}+o_\lambda(1)\bigg)\|u\|_{\lambda}^2$ in $(\mathcal{M}_{\lambda}^{k_0^*-1})^{\perp}$.
\end{enumerate}
\end{lemma}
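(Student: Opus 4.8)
The plan is to recast both assertions as pure quadratic-form comparisons between $\|\cdot\|_\lambda^2$ and the nonnegative form $\mathcal{G}_\lambda$. Recall $\mathcal{D}_\lambda(u,u)=\|u\|_\lambda^2-\mathcal{G}_\lambda(u,u)$ and that $\mathcal{G}_\lambda(u,u)\geq0$. Since $\beta_{k_0^*-1}(\lambda),\beta_{k_0^*}(\lambda)>0$ for $\lambda>\Lambda_{k_0^*}^*$ (they are attained infima of $\|\cdot\|_\lambda^2$ over sets on which $\mathcal{G}_\lambda=1$, hence strictly positive), a direct manipulation shows that conclusion $(1)$ is equivalent to $\|u\|_\lambda^2\leq\beta_{k_0^*-1}(\lambda)\mathcal{G}_\lambda(u,u)$ on $\bigoplus_{i=1}^{k_0^*-1}\mathcal{N}_{\lambda,i}$, and conclusion $(2)$ is equivalent to $\|u\|_\lambda^2\geq\beta_{k_0^*}(\lambda)\mathcal{G}_\lambda(u,u)$ on $(\mathcal{M}_\lambda^{k_0^*-1})^\perp$. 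I will prove these two reformulated inequalities; the equivalences are clean once one keeps the sign of $\beta$ in mind.

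Conclusion $(2)$ is essentially immediate from the variational definition of $\beta_{k_0^*}(\lambda)$. Reading $(\mathcal{M}_\lambda^{k_0^*-1})^\perp$ as the $\langle\cdot,\cdot\rangle_\lambda$-orthogonal complement of $\bigoplus_{i=1}^{k_0^*-1}\mathcal{N}_{\lambda,i}$ and noting that both sides of $\|u\|_\lambda^2\geq\beta_{k_0^*}(\lambda)\mathcal{G}_\lambda(u,u)$ are homogeneous of degree two in $u$, it suffices to treat $u$ with $\mathcal{G}_\lambda(u,u)>0$ (the case $\mathcal{G}_\lambda(u,u)=0$ being trivial, as then the right-hand side vanishes while $\|u\|_\lambda^2\geq0$). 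Normalizing so that $\mathcal{G}_\lambda(u,u)=1$ places $u$ in the constraint set defining $\beta_{k_0^*}(\lambda)$, whence $\|u\|_\lambda^2\geq\beta_{k_0^*}(\lambda)$ by the infimum property, and scaling back gives the claim.

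For conclusion $(1)$, I would decompose $u=\sum_{i=1}^{k_0^*-1}u_i$ with $u_i\in\mathcal{N}_{\lambda,i}$ and use two ingredients extracted from the earlier lemmas. First, every $u_i\in\mathcal{N}_{\lambda,i}$ solves the weak form of \eqref{eq0022}, i.e. $\langle u_i,w\rangle_\lambda=\beta_i(\lambda)\mathcal{G}_\lambda(u_i,w)$ for all $w\in E_\lambda$, because $\mathcal{N}_{\lambda,i}$ is spanned by minimizers sharing the common eigenvalue $\beta_i(\lambda)$ (Lemmas~\ref{lem0001}, \ref{lem0003} and \ref{lem0005}$(1)$). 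Second, $\langle u_i,u_j\rangle_\lambda=0$ for $i\neq j$, by the mutual $\langle\cdot,\cdot\rangle_\lambda$-orthogonality of the spaces $\mathcal{N}_{\lambda,i}$ (Lemma~\ref{lem0004} and Lemma~\ref{lem0005}$(2)$). Testing the weak identity against $u_i$ gives $\|u_i\|_\lambda^2=\beta_i(\lambda)\mathcal{G}_\lambda(u_i,u_i)$, while testing it against $u_j$ with $j\neq i$, combined with the second ingredient and $\beta_i(\lambda)>0$, forces $\mathcal{G}_\lambda(u_i,u_j)=0$. Consequently $\|u\|_\lambda^2=\sum_i\beta_i(\lambda)\mathcal{G}_\lambda(u_i,u_i)$ and $\mathcal{G}_\lambda(u,u)=\sum_i\mathcal{G}_\lambda(u_i,u_i)$; since $\beta_1(\lambda)<\beta_2(\lambda)<\cdots$ (Lemma~\ref{lem0005}$(2)$) yields $\beta_i(\lambda)\leq\beta_{k_0^*-1}(\lambda)$ for $i\leq k_0^*-1$, replacing each weight by $\beta_{k_0^*-1}(\lambda)$ gives $\|u\|_\lambda^2\leq\beta_{k_0^*-1}(\lambda)\mathcal{G}_\lambda(u,u)$, which is exactly $(1)$.

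The two asymptotic equalities then follow by continuity of $t\mapsto 1/t$ at the positive numbers $\beta_{k_0^*-1}^0$ and $\beta_{k_0^*}^0$: Lemmas~\ref{lem0001}, \ref{lem0003} and \ref{lem0005} give $\beta_{k_0^*-1}(\lambda)\to\beta_{k_0^*-1}^0$ and $\beta_{k_0^*}(\lambda)\to\beta_{k_0^*}^0$ as $\lambda\to+\infty$, so $1-\frac{1}{\beta_{k_0^*-1}(\lambda)}=1-\frac{1}{\beta_{k_0^*-1}^0}+o_\lambda(1)$ and likewise for $k_0^*$. The step requiring the most care is securing the spectral structure used in part $(1)$, namely that \emph{every} element of each finite-dimensional space $\mathcal{N}_{\lambda,i}$ satisfies the weak eigenvalue identity, so that the $\langle\cdot,\cdot\rangle_\lambda$-orthogonality and the $\mathcal{G}_\lambda$-orthogonality of the eigenspaces hold simultaneously; once this is in place, $(1)$ is a weighted-sum estimate and $(2)$ is a one-line consequence of the infimum characterization of $\beta_{k_0^*}(\lambda)$.
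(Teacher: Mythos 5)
Your argument is correct and is exactly the intended one: the paper's own proof consists of the single line that the lemma ``follows immediately from Lemmas~\ref{lem0001}, \ref{lem0003} and \ref{lem0005}'', and your write-up simply supplies the details of that deduction --- the weak eigenvalue identity $\langle u_i,w\rangle_\lambda=\beta_i(\lambda)\mathcal{G}_\lambda(u_i,w)$ on each $\mathcal{N}_{\lambda,i}$, the simultaneous $\langle\cdot,\cdot\rangle_\lambda$- and $\mathcal{G}_\lambda$-orthogonality it forces, the infimum characterization of $\beta_{k_0^*}(\lambda)$ for part $(2)$, and the convergence $\beta_j(\lambda)\to\beta_j^0$ for the $o_\lambda(1)$ statements. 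No gaps; your homogeneity remark resolving the reading of $(\mathcal{M}_{\lambda}^{k_0^*-1})^{\perp}$ as a subspace rather than the constraint set is a useful clarification the paper leaves implicit.
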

\begin{proof}
The proof follows immediately from Lemmas~\ref{lem0001}, \ref{lem0003} and \ref{lem0005}.
\end{proof}

\begin{remark}\label{rmk0001}{\em
By Lemmas~\ref{lem0002}, \ref{lem0004} and \ref{lem0005}, we also have $\bigoplus_{i=1}^{k_0^*-1}\mathcal{N}_{\lambda,i}=\emptyset$ in the case of $\beta_1^0>1$ while $\bigoplus_{i=1}^{k_0^*-1}\mathcal{N}_{\lambda,i}\not=\emptyset$ and dim$(\bigoplus_{i=1}^{k_0^*-1}\mathcal{N}_{\lambda,i})\leq$dim$(\bigoplus_{i=1}^{k_0^*-1}\mathcal{N}_{i})$ in the case of $\beta_1^0<1$.}
\end{remark}

\section{The existence of nontrivial solutions}
We first consider the case of $\min\{a_0, b_0\}<0$.  Due to the decomposition of $E_\lambda$ in this case, we will obtain the nonzero critical points of $\mathcal{E}_\lambda(u)$ by using the linking method.
\begin{lemma}\label{lem0007}
Suppose that the conditions $(B_1)$--$(B_3)$ and $(F_1)$--$(F_2)$ hold with $p>2$ and $\min\{a_0, b_0\}<0$.  If $\beta_{k_0^*-1}^0<1$ and $l_0d_0<(1-\frac{1}{\beta_{k_0^*}^0})$ then there exists $\overline{\Lambda}_0>0$ such that
\begin{equation*}
\inf_{(\mathcal{M}_{\lambda}^{k_0^*-1})^{\perp}\cap\mathbb{B}_{\lambda,\rho_0}}\mathcal{E}_\lambda(u)\geq\kappa_0\quad\text{and}\quad
\sup_{\partial \mathcal{Q}_{\lambda,R_0}}\mathcal{E}_\lambda(u)\leq0
\end{equation*}
for all $\lambda>\overline{\Lambda}_0$ with some $\kappa_0>0$ and $R_0>\rho_0>0$ independent of $\lambda$, where $\mathbb{B}_{\lambda,\rho_0}=\{u\in E_\lambda:\|u\|_\lambda=\rho_0\}$, $\mathcal{Q}_{\lambda,R_0}=\{u=v+t e_{k_0^*}(\lambda): v\in\bigoplus_{i=1}^{k_0^*-1}\mathcal{N}_{\lambda,i}, t\geq0, \|u\|_\lambda\leq R_0\}$ and
\begin{equation*}
d_0=\left\{\aligned&\mathcal{A}_\infty a_0^{-1},\quad\text{if }a_0>0,\\
&4\mathcal{A}_\infty^2 B^4_0,\quad\text{if }a_0\leq0.\endaligned\right.
\end{equation*}
\end{lemma}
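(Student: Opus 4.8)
The statement is precisely the linking geometry for $\mathcal{E}_\lambda$ relative to the splitting of $E_\lambda$ into the finite-dimensional subspace $\bigoplus_{i=1}^{k_0^*-1}\mathcal{N}_{\lambda,i}$, on which $\mathcal{D}_\lambda$ is negative by Lemma~\ref{lem0006}(1) (the hypothesis $\beta_{k_0^*-1}^0<1$ makes the coefficient $1-1/\beta_{k_0^*-1}^0$ strictly negative), and its $\langle\cdot,\cdot\rangle_\lambda$-orthogonal complement $(\mathcal{M}_\lambda^{k_0^*-1})^{\perp}$, on which $\mathcal{D}_\lambda$ is positive by Lemma~\ref{lem0006}(2) (here $\beta_{k_0^*}^0>1$ by the definition \eqref{eq9998} of $k_0^*$). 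The plan is to verify the two inequalities separately, keeping track that the radii produced are independent of $\lambda$; the delicate point will be the second one, where I must dominate a positive quadratic term by the nonlinearity uniformly in $\lambda$.

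For the first inequality I would start from Lemma~\ref{lem0006}(2): for $u\in(\mathcal{M}_\lambda^{k_0^*-1})^{\perp}$ one has $\tfrac12\mathcal{D}_\lambda(u,u)\geq\tfrac12(1-\tfrac1{\beta_{k_0^*}^0}+o_\lambda(1))\|u\|_\lambda^2$. From $(F_1)$--$(F_2)$ in the sharp form $F(t)\leq\tfrac{l_0+\ve}{2}t^2+C_\ve|t|^p$, together with \eqref{eq0020} (whose constant $C_\lambda\to d_0$) and the embedding $E_\lambda\hookrightarrow L^p(\bbrn)$, which is uniform for large $\lambda$ via \eqref{eq0020}--\eqref{eq0010}, I get $\int_{\bbrn}F(u)\,dx\leq\tfrac{l_0+\ve}{2}(d_0+o_\lambda(1))\|u\|_\lambda^2+C\|u\|_\lambda^p$. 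Hence $\mathcal{E}_\lambda(u)\geq\tfrac12[(1-\tfrac1{\beta_{k_0^*}^0})-(l_0+\ve)d_0+o_\lambda(1)]\|u\|_\lambda^2-C\|u\|_\lambda^p$. The assumption $l_0d_0<1-\tfrac1{\beta_{k_0^*}^0}$ lets me fix $\ve$ so small that, for $\lambda$ large, the quadratic coefficient is bounded below by a positive constant $\delta$; since $p>2$, taking a small $\rho_0$ (independent of $\lambda$) yields $\mathcal{E}_\lambda(u)\geq\delta\rho_0^2-C\rho_0^p\geq\kappa_0>0$ on $\mathbb{B}_{\lambda,\rho_0}$.

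For the second inequality I split $\partial\mathcal{Q}_{\lambda,R_0}$ into the face $\{t=0\}$ and the spherical part $\{\|u\|_\lambda=R_0\}$. On $\{t=0\}$ one has $u\in\bigoplus_{i=1}^{k_0^*-1}\mathcal{N}_{\lambda,i}$, so $\mathcal{D}_\lambda(u,u)\leq0$ for large $\lambda$ by Lemma~\ref{lem0006}(1), and $F\geq0$ gives $\mathcal{E}_\lambda(u)\leq0$. On the spherical part write $u=v+te_{k_0^*}(\lambda)$ with $v\in\bigoplus_{i=1}^{k_0^*-1}\mathcal{N}_{\lambda,i}$ and $t\geq0$. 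The crucial algebraic fact is that the cross term vanishes: since $e_{k_0^*}(\lambda)\in\mathcal{N}_{\lambda,k_0^*}$ is $\langle\cdot,\cdot\rangle_\lambda$-orthogonal to $v$ by Lemma~\ref{lem0005}, and $e_{k_0^*}(\lambda)$ satisfies \eqref{eq0022}, i.e. $\langle e_{k_0^*}(\lambda),w\rangle_\lambda=\beta_{k_0^*}(\lambda)\mathcal{G}_\lambda(e_{k_0^*}(\lambda),w)$ for all $w$ (the weak identity already appearing in \eqref{eq0013}), choosing $w=v$ gives $\mathcal{G}_\lambda(e_{k_0^*}(\lambda),v)=0$ and hence $\mathcal{D}_\lambda(e_{k_0^*}(\lambda),v)=0$. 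Using moreover that the decomposition is $\langle\cdot,\cdot\rangle_\lambda$-orthogonal ($\|u\|_\lambda^2=\|v\|_\lambda^2+t^2\|e_{k_0^*}(\lambda)\|_\lambda^2$), that $\mathcal{D}_\lambda(v,v)\leq0$, and that $\mathcal{D}_\lambda(e_{k_0^*}(\lambda),e_{k_0^*}(\lambda))=(1-\tfrac1{\beta_{k_0^*}(\lambda)})\beta_{k_0^*}(\lambda)$, I obtain $\mathcal{D}_\lambda(u,u)\leq t^2(\beta_{k_0^*}(\lambda)-1)\leq C\|u\|_\lambda^2$.

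It remains to beat this quadratic bound by the nonlinear term, and this is the step I expect to be the main obstacle. From $(F_2)$ with $p>2$ and $F\geq0$ there are $M,c_1>0$ with $F(t)\geq c_1|t|^p$ for $|t|\geq M$, so $\int_{\bbrn}F(u)\,dx\geq c_1\|u\|_{L^p(\bbrn)}^p-c_1M^{p-2}\|u\|_{L^2(\bbrn)}^2$. The real work is to convert $\|u\|_{L^p}^p$ into a superquadratic power of $\|u\|_\lambda$ uniformly in $\lambda$; for this I would prove that on the finite-dimensional space $\bigoplus_{i=1}^{k_0^*-1}\mathcal{N}_{\lambda,i}\oplus\text{span}\{e_{k_0^*}(\lambda)\}$ there is $c_2>0$, independent of large $\lambda$, with $\|u\|_{L^p(\bbrn)}\geq c_2\|u\|_\lambda$. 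This follows by a compactness argument: if it failed along some $\lambda_n\to+\infty$, then since $\dim\big(\bigoplus_{i=1}^{k_0^*}\mathcal{N}_{\lambda_n,i}\big)$ stays bounded and the basis vectors converge strongly in $\h$ via $e_i(\lambda_n)\to e_i^0\in\mathcal{N}_i\subset H$ (Lemmas~\ref{lem0001}, \ref{lem0003}, \ref{lem0005}), a unit-norm sequence would pass to a nonzero limit $u^*\in H$ with $\|u^*\|_{L^p}=0$, a contradiction. Combining with \eqref{eq0020} this gives $\int_{\bbrn}F(u)\,dx\geq c_1c_2^p\|u\|_\lambda^p-C\|u\|_\lambda^2$ on that subspace, whence $\mathcal{E}_\lambda(u)\leq C'\|u\|_\lambda^2-c_1c_2^p\|u\|_\lambda^p$; as $p>2$ this is $\leq0$ once $\|u\|_\lambda=R_0$ is large enough, with $R_0$ independent of $\lambda$, and enlarging $R_0$ ensures $R_0>\rho_0$. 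Assembling the two faces completes the argument, with $\overline{\Lambda}_0$ taken large enough for all $o_\lambda(1)$ terms above to be controlled.
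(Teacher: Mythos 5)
Your proof is correct and its overall architecture coincides with the paper's: the lower bound on $(\mathcal{M}_{\lambda}^{k_0^*-1})^{\perp}\cap\mathbb{B}_{\lambda,\rho_0}$ via the estimate $F(t)\leq\frac{l_0+\ve}{2}t^2+C|t|^q$ ($q>2$) combined with \eqref{eq0020}, \eqref{eq0010} and Lemma~\ref{lem0006}(2), and the splitting of $\partial\mathcal{Q}_{\lambda,R_0}$ into the face $t=0$ (where Lemma~\ref{lem0006}(1), $\beta_{k_0^*-1}^0<1$ and $F\geq0$ give nonpositivity) and the spherical part, where $\mathcal{D}_\lambda(u,u)\leq(1-\frac1{\beta_{k_0^*}(\lambda)})\|u\|_\lambda^2$ must be beaten by the superquadratic growth of $F$. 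Your verification that the cross term $\mathcal{D}_\lambda(e_{k_0^*}(\lambda),v)$ vanishes, via the orthogonality $\langle e_{k_0^*}(\lambda),v\rangle_\lambda=0$ and the eigenvalue identity from \eqref{eq0022}, is exactly the mechanism behind the paper's appeal to Lemma~\ref{lem0006}.

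The one place where you genuinely diverge is the final step on the spherical part. The paper normalizes $u=R\widetilde{u}_\lambda$ with $\|\widetilde{u}_\lambda\|_\lambda=1$, uses Lemmas~\ref{lem0001}, \ref{lem0003} and \ref{lem0005} to pass to a limit $\widetilde{u}\in\bigoplus_{i=1}^{k_0^*}\mathcal{N}_i$ as $\lambda\to+\infty$, transfers the integral via \eqref{eq9997}, and then applies Fatou's lemma to $\frac{F(R\widetilde u)}{R^2\widetilde u^2}$ (together with \eqref{eq9996}, which guarantees $\|\widetilde u\|_{L^2}$ is bounded below) to produce $R_0$. You instead prove a reverse embedding $\|u\|_{L^p(\bbr^N)}\geq c_2\|u\|_\lambda$, uniform in large $\lambda$, on the finite-dimensional linking subspace, by a compactness argument on the converging bases $e_i(\lambda)\to e_i^0$, and then conclude by the elementary comparison $C'\|u\|_\lambda^2-c\|u\|_\lambda^p\leq0$ for $\|u\|_\lambda=R_0$ large. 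Your route makes the independence of $R_0$ from $\lambda$ (and from the particular subsequential limit $\widetilde u$) completely explicit, which is a point the paper leaves somewhat implicit; the paper's route avoids introducing the auxiliary constant $c_2$ and reuses machinery (\eqref{eq9996}, Fatou) that it needs anyway for the asymptotically linear case in Lemma~\ref{lem0008}. Both are sound.
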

\begin{proof}
Since $l_0 d_0<(1-\frac{1}{\beta_{k_0^*}^0})$, there exists $\delta>0$ such that $(1+\delta)l_0 d_0<(1-\frac{1}{\beta_{k_0^*}^0})$.  By the conditions $(F_1)$--$(F_2)$, we have $|F(u)|\leq\frac{(1+\delta)l_0}{2}|u|^2+C|u|^{2^*}$ for all $u\in\bbr$.  It follows from the Sobolev inequality, \eqref{eq0020} and \eqref{eq0010} that
\begin{eqnarray*}
\mathcal{E}_\lambda(u)&\geq&\frac12(\mathcal{D}_\lambda(u,u)-(1+\delta_0)l_0\|u\|_{L^2(\bbr^N)}^2)-C\|u\|_{L^{2^*}(\bbr^N)}^{2^*}\\
&\geq&\frac12(\mathcal{D}_\lambda(u,u)-(1+\delta)l_0 C_\lambda\|u\|_{\lambda}^2)-CS^{-\frac{2^*}{2}}B_0^{2^*}C_\lambda^{\frac{2^*}{4}}\|u\|_{\lambda}^{2^*}\\
&=&\frac12(\mathcal{D}_\lambda(u,u)-(1+\delta)l_0 (d_0+o_\lambda(1))\|u\|_{\lambda}^2)-CS^{-\frac{2^*}{2}}B_0^{2^*}(d_0+o_\lambda(1))^{\frac{2^*}{4}}\|u\|_{\lambda}^{2^*}
\end{eqnarray*}
for all $u\in E_\lambda$.  Using Lemma~\ref{lem0006}, we get
\begin{equation*}
\mathcal{E}_\lambda(u)\geq\frac{1}{2}\bigg(1-\frac{1}{\beta_{k_0^*}^0}+o_\lambda(1)-(1+\delta)l_0 d_0\bigg)\|u\|_{\lambda}^2-CS^{-\frac{2^*}{2}}B_0^{2^*}(d_0+o_\lambda(1))^{\frac{2^*}{4}}\|u\|_{\lambda}^{2^*}
\end{equation*}
for all $u\in (\mathcal{M}_{\lambda}^{k_0^*-1})^{\perp}$.  Now, by a standard argument, it is easy to check that there exists $\Lambda_1^*>\Lambda_{k_0^*}^*$ such that
\begin{equation*}
\inf_{(\mathcal{M}_{\lambda}^{k_0^*-1})^{\perp}\cap\mathbb{B}_{\lambda,\rho_0}}\mathcal{E}_\lambda(u)\geq\kappa_0
\end{equation*}
for all $\lambda>\Lambda^*_1$ with some $\kappa_0>0$ and $\rho_0>0$ independent of $\lambda$.  It remains to show that there exists a positive constant $R_0(>\rho_0)$ so large that
\begin{equation*}
\sup_{\partial \mathcal{Q}_{\lambda,R_0}}\mathcal{E}_\lambda(u)\leq0.
\end{equation*}
for $\lambda$ sufficient large.  Indeed,
let $u_\lambda\in\partial \mathcal{Q}_{\lambda,R}$ be such that $u_\lambda=R\widetilde{u}$ with $\widetilde{u}_\lambda\in\mathcal{Q}_{\lambda,1}$, then one of the following two cases must happen:
\begin{enumerate}
\item[$(1)$] $\widetilde{u}_\lambda\in \bigoplus_{i=1}^{k_0^*-1}\mathcal{N}_{\lambda,i}$ and $\|\widetilde{u}_\lambda\|_\lambda\leq1$;
\item[$(2)$] $\widetilde{u}_\lambda\in \mathcal{Q}_{\lambda,1}\backslash\bigoplus_{i=1}^{k_0^*-1}\mathcal{N}_{\lambda,i}$ and $\|\widetilde{u}_\lambda\|_\lambda=1$.
\end{enumerate}
In the case~$(1)$, it follows from Lemma~\ref{lem0006} and the condition $(F_4)$ that $\mathcal{E}_\lambda(R\widetilde{u}_\lambda)\leq0$ for all $R\geq0$ and $\lambda>\Lambda_1^*$.  In the case~$(2)$, also by using Lemma~\ref{lem0006}, we deduce
\begin{eqnarray}
\mathcal{E}_\lambda(R\widetilde{u}_\lambda)&=&R^2\bigg(\frac12\mathcal{D}_\lambda(\widetilde{u}_\lambda,\widetilde{u}_\lambda)
-\int_{\bbr^N}\frac{F(x,R\widetilde{u}_\lambda)}{(R\widetilde{u}_\lambda)^2}\widetilde{u}_\lambda^2dx\bigg)\notag\\
&\leq&R^2\bigg(\frac12(1-\frac{1}{\beta_{k_0^*}})+o_\lambda(1)
-\int_{\bbr^N}\frac{F(x,R\widetilde{u}_\lambda)}{(R\widetilde{u}_\lambda)^2}\widetilde{u}_\lambda^2dx\bigg).\label{eq0023}
\end{eqnarray}
On the other hand, since $\widetilde{u}_\lambda\in\bigoplus_{i=1}^{k_0^*}\mathcal{N}_{\lambda,i}$, by Lemmas~\ref{lem0001}, \ref{lem0003} and \ref{lem0005}, we have $\widetilde{u}_\lambda=\widetilde{u}+o_\lambda(1)$ strongly in $\h$ for some $\widetilde{u}\in \bigoplus_{i=1}^{k_0^*}\mathcal{N}_{i}$ with
\begin{equation*}
\int_{\Omega}|\Delta \widetilde{u}|^2+\max\{a_0, 0\}|\nabla \widetilde{u}|^2+\max\{b_0, 0\}|\widetilde{u}|^2dx=1,
\end{equation*}
which together with the conditions $(F_1)$--$(F_2)$ gives that
\begin{equation}\label{eq9997}
\int_{\bbr^N}\frac{F(x,R\widetilde{u}_\lambda)}{(R\widetilde{u}_\lambda)^2}\widetilde{u}_\lambda^2dx
=\int_{\bbr^N}\frac{F(x,R\widetilde{u})}{(R\widetilde{u})^2}\widetilde{u}^2dx+o_\lambda(1).
\end{equation}
Clearly,
\begin{equation*}
\|u\|_{\Omega,0}=\bigg(\int_{\Omega}\big(|\Delta u|^2+\max\{a_0, 0\}|\nabla u|^2+\max\{b_0, 0\}|u|^2\big)dx\bigg)^{\frac12}
\end{equation*}
is a norm in $H$, and note that dim$(\bigoplus_{i=1}^{k_0^*}\mathcal{N}_{i})<+\infty$, we see that there exists $d_*>0$ such that
\begin{equation}\label{eq9996}
\|u\|_{\Omega,0}\leq d_*\|u\|_{L^2(\bbr^N)}\quad\text{for all }u\in\bigoplus_{i=1}^{k_0^*}\mathcal{N}_{i}.
\end{equation}
Now, by the condition $(F_2)$ and the Fatou lemma, there exists $R_0>\rho_0$ independent of $\lambda$ such that
\begin{equation*}
\int_{\bbr^N}\frac{F(x,R_0\widetilde{u})}{R_0^2\widetilde{u}^2}\widetilde{u}^2dx\geq(1-\frac{1}{\beta_{k_0^*}}).
\end{equation*}
It follows from \eqref{eq0023} and \eqref{eq9997}  that there exist $\overline{\Lambda}_0>\Lambda^*_1$ and such that $\sup_{\partial \mathcal{Q}_{\lambda,R_0}}\mathcal{E}_\lambda(u)\leq0$ for all $\lambda>\overline{\Lambda}_0$.
\end{proof}

\begin{lemma}\label{lem0008}
Suppose that the conditions $(B_1)$--$(B_3)$ and $(F_1)$--$(F_2)$ hold with $p=2$.  If
\begin{equation*}
l_0 d_0<(1-\frac{1}{\beta_{k_0^*}^0})<\frac{l_\infty}{d_*},
\end{equation*}
then there exists $\widetilde{\Lambda}_0>0$ such that the conclusions of Lemma~\ref{lem0007} hold for $\lambda>\widetilde{\Lambda}_0$, where $d_*$ is given by \eqref{eq9996}.
\end{lemma}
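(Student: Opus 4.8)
The plan is to mirror the architecture of the proof of Lemma~\ref{lem0007}, keeping the sphere estimate almost verbatim and reworking only the linking-cap estimate, where the asymptotically linear growth $p=2$ forces a genuinely different argument. Throughout I will use two elementary consequences of $(F_1)$, $(F_2)$ and the monotonicity $(F_3)$ (the latter being available under the hypotheses of Theorem~\ref{thm0002}). First, exactly as in Lemma~\ref{lem0007}, for every $\delta>0$ there is $C_\delta>0$ with $F(t)\leq\frac{(1+\delta)l_0}{2}|t|^2+C_\delta|t|^{2^*}$ for all $t\in\bbr$. Second, $(F_3)$ gives $0\leq F(t)\leq\frac{l_\infty}{2}|t|^2$, and, since $f(t)|t|^{-1}$ is nondecreasing, one checks $f(s)s-2F(s)\geq0$, so that $s\mapsto\frac{F(s)}{s^2}$ is nondecreasing in $|s|$ with $\frac{F(s)}{s^2}\to\frac{l_\infty}{2}$ as $|s|\to\infty$ by $(F_2)$; this monotone-in-amplitude behaviour is what replaces the blow-up produced by $(F_4)$ in the superlinear case.

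First I would establish the sphere bound $\inf_{(\mathcal{M}_\lambda^{k_0^*-1})^\perp\cap\mathbb{B}_{\lambda,\rho_0}}\mathcal{E}_\lambda(u)\geq\kappa_0$. This step is identical to its counterpart in Lemma~\ref{lem0007}: inserting $F(t)\leq\frac{(1+\delta)l_0}{2}|t|^2+C_\delta|t|^{2^*}$ into $\mathcal{E}_\lambda$, passing from the $L^2$ and $L^{2^*}$ norms to $\|\cdot\|_\lambda$ via the Sobolev inequality together with \eqref{eq0020} and \eqref{eq0010}, and then invoking Lemma~\ref{lem0006}(2), one obtains on $(\mathcal{M}_\lambda^{k_0^*-1})^\perp$ the estimate $\mathcal{E}_\lambda(u)\geq\frac12(1-\frac{1}{\beta_{k_0^*}^0}+o_\lambda(1)-(1+\delta)l_0 d_0)\|u\|_\lambda^2-C\|u\|_\lambda^{2^*}$. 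The left-hand hypothesis $l_0 d_0<(1-\frac{1}{\beta_{k_0^*}^0})$ lets one fix $\delta$ so small that the quadratic coefficient stays positive for large $\lambda$, and since $2^*>2$, choosing $\rho_0$ small and $\lambda$ large yields $\kappa_0>0$. Only the left inequality is needed here.

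For the cap bound $\sup_{\partial\mathcal{Q}_{\lambda,R_0}}\mathcal{E}_\lambda(u)\leq0$, I would again write $u=R\widetilde{u}$ with $\widetilde{u}\in\mathcal{Q}_{\lambda,1}$ and split into the two boundary cases of Lemma~\ref{lem0007}. In case~$(1)$, where $\widetilde{u}\in\bigoplus_{i=1}^{k_0^*-1}\mathcal{N}_{\lambda,i}$ with $\|\widetilde{u}\|_\lambda\leq1$, Lemma~\ref{lem0006}(1) and $\beta_{k_0^*-1}^0<1$ give $\mathcal{D}_\lambda(u,u)\leq(1-\frac{1}{\beta_{k_0^*-1}^0}+o_\lambda(1))\|u\|_\lambda^2<0$ for large $\lambda$, while $F\geq0$ by $(F_3)$; hence $\mathcal{E}_\lambda(R\widetilde{u})\leq0$ for every $R\geq0$, exactly as before. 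All the difficulty is in case~$(2)$, where $\widetilde{u}\in\bigoplus_{i=1}^{k_0^*}\mathcal{N}_{\lambda,i}$ with $\|\widetilde{u}\|_\lambda=1$; by Lemmas~\ref{lem0001}, \ref{lem0003} and \ref{lem0005} one may write $\widetilde{u}=\widetilde{u}_0+o_\lambda(1)$ strongly in $\h$ for some $\widetilde{u}_0\in\bigoplus_{i=1}^{k_0^*}\mathcal{N}_i$ with $\|\widetilde{u}_0\|_{\Omega,0}=1$.

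In case~$(2)$ I would start from the ray-energy estimate \eqref{eq0023} and the reduction \eqref{eq9997}, bounding $\frac12\mathcal{D}_\lambda(\widetilde{u},\widetilde{u})\leq\frac12(1-\frac{1}{\beta_{k_0^*}^0})+o_\lambda(1)$ by Lemma~\ref{lem0006}, and then analyzing the nonlinear term on the fixed limiting direction $\widetilde{u}_0$. Here the decisive difference from the superlinear case surfaces: because $p=2$, the integrand $\frac{F(R\widetilde{u}_0)}{(R\widetilde{u}_0)^2}\widetilde{u}_0^2$ does not blow up but, by the monotonicity of $\frac{F(s)}{s^2}$ and $\frac{F(s)}{s^2}\to\frac{l_\infty}{2}$, increases in $R$ with pointwise limit $\frac{l_\infty}{2}\widetilde{u}_0^2$, so by monotone convergence $\int_{\bbrn}\frac{F(R\widetilde{u}_0)}{(R\widetilde{u}_0)^2}\widetilde{u}_0^2\,dx\nearrow\frac{l_\infty}{2}\|\widetilde{u}_0\|_{L^2(\bbrn)}^2$. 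The finite-dimensional inequality \eqref{eq9996} bounds $\|\widetilde{u}_0\|_{L^2(\bbrn)}^2$ from below through $d_*$, and the right-hand hypothesis $(1-\frac{1}{\beta_{k_0^*}^0})<\frac{l_\infty}{d_*}$ is precisely calibrated to force the strict gap $\frac{l_\infty}{2}\|\widetilde{u}_0\|_{L^2(\bbrn)}^2>\frac12(1-\frac{1}{\beta_{k_0^*}^0})$ for every such $\widetilde{u}_0$. Since the convergence is monotone in $R$ and $\widetilde{u}_0$ ranges over the compact unit sphere $\{\|\cdot\|_{\Omega,0}=1\}$ of the fixed finite-dimensional space $\bigoplus_{i=1}^{k_0^*}\mathcal{N}_i$, a single $R_0>\rho_0$, independent of $\lambda$, makes $\int_{\bbrn}\frac{F(R_0\widetilde{u}_0)}{(R_0\widetilde{u}_0)^2}\widetilde{u}_0^2\,dx>\frac12(1-\frac{1}{\beta_{k_0^*}^0})$ uniformly; feeding this back into \eqref{eq0023} and \eqref{eq9997} gives $\mathcal{E}_\lambda(R_0\widetilde{u})\leq R_0^2(o_\lambda(1)-\varepsilon_0)\leq0$ for $\lambda$ large, which is the claim. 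The main obstacle is exactly this case: unlike the superlinear setting, where $(F_4)$ drives the ray energy to $-\infty$ so that any large $R_0$ works, here the ray energy converges to a finite limit, and the whole content of the argument is to certify that this limit is negative uniformly in the direction—a certification supplied jointly by the spectral-gap hypothesis $(1-\frac{1}{\beta_{k_0^*}^0})<\frac{l_\infty}{d_*}$, the estimate \eqref{eq9996}, and the compactness of the direction set.
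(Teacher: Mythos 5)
Your proposal is correct and follows essentially the same route as the paper: the sphere estimate is carried over verbatim from Lemma~\ref{lem0007}, case~(1) of the cap is handled by Lemma~\ref{lem0006} together with $F\geq0$, and case~(2) hinges on showing $\int_{\bbr^N}F(R\widetilde{u})(R\widetilde{u})^{-2}\widetilde{u}^2\,dx\to\frac{l_\infty}{2}\|\widetilde{u}\|_{L^2(\bbr^N)}^2\geq\frac{l_\infty}{2d_*}$ via \eqref{eq9996} and the gap hypothesis $(1-\frac{1}{\beta_{k_0^*}^0})<\frac{l_\infty}{d_*}$, exactly as in the paper. The only deviations are cosmetic improvements: you replace the paper's Fatou argument by monotone convergence (justified by deriving the monotonicity of $F(s)/s^2$ from $(F_3)$, which, as you correctly note, is available under Theorem~\ref{thm0002} even though the lemma statement lists only $(F_1)$--$(F_2)$), and you make explicit the compactness-over-directions step needed to obtain a single $R_0$ independent of $\lambda$, a point the paper leaves implicit.
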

\begin{proof}
Similarly as in the proof of Lemma~\ref{lem0007}, we can show that there exists $\Lambda^*_2>\Lambda_{k_0^*}^*$ such that
\begin{equation*}
\inf_{(\mathcal{M}_{\lambda}^{k_0^*-1})^{\perp}\cap\mathbb{B}_{\lambda,\widetilde{\rho}_0}}\mathcal{E}_\lambda(u)\geq\widetilde{\kappa}_0
\end{equation*}
for all $\lambda>\Lambda^*_2$ with some $\widetilde{\kappa}_0>0$ and $\widetilde{\rho}_0>0$ independent of $\lambda$.  In what follows, we will prove that there exists a positive constant $\widetilde{R}_0(>\widetilde{\rho}_0)$ so large that
\begin{equation*}
\sup_{\partial \mathcal{Q}_{\lambda,\widetilde{R}_0}}\mathcal{E}_\lambda(u)\leq0.
\end{equation*}
for $\lambda$ sufficient large.  In fact,  if $u_\lambda\in\partial \mathcal{Q}_{\lambda,R}$ is such that $u_\lambda=R\widetilde{u}$ with $\widetilde{u}_\lambda\in\mathcal{Q}_{\lambda,1}$ then one of the following two cases must happen:
\begin{enumerate}
\item[$(1)$] $\widetilde{u}_\lambda\in \bigoplus_{i=1}^{k_0^*-1}\mathcal{N}_{\lambda,i}$ and $\|\widetilde{u}_\lambda\|_\lambda\leq1$;
\item[$(2)$] $\widetilde{u}_\lambda\in \mathcal{Q}_{\lambda,1}\backslash\bigoplus_{i=1}^{k_0^*-1}\mathcal{N}_{\lambda,i}$ and $\|\widetilde{u}_\lambda\|_\lambda=1$.
\end{enumerate}
In the case~$(1)$, it follows from Lemma~\ref{lem0006} and the condition $(F_3)$ that $\mathcal{E}_\lambda(R\widetilde{u}_\lambda)\leq 0$ for all $R\geq0$ and $\lambda>\Lambda^*_2$.  In the case~$(2)$, by similar arguments as used in the proofs of \eqref{eq0023} and Lemma~\ref{lem0005}, we have
\begin{eqnarray}\label{eq9994}
\mathcal{E}_\lambda(R\widetilde{u})
\leq R^2\bigg(\frac12(1-\frac{1}{\beta_{k_0^*}})+o_\lambda(1)
-\int_{\bbr^N}\frac{F(x,R\widetilde{u}_\lambda)}{(R\widetilde{u}_\lambda)^2}\widetilde{u}_\lambda^2dx\bigg)
\end{eqnarray}
and
\begin{equation}\label{eq9995}
\int_{\bbr^N}\frac{F(x,R\widetilde{u}_\lambda)}{(R\widetilde{u}_\lambda)^2}\widetilde{u}_\lambda^2dx
=\int_{\bbr^N}\frac{F(x,R\widetilde{u})}{(R\widetilde{u})^2}\widetilde{u}^2dx+o_\lambda(1)
\end{equation}
for some $\widetilde{u}\in \bigoplus_{i=1}^{k_0^*}\mathcal{N}_{i}$ with
\begin{equation*}
\int_{\Omega}\big(|\Delta \widetilde{u}|^2+\max\{a_0, 0\}|\nabla \widetilde{u}|^2+\max\{b_0, 0\}|\widetilde{u}|^2\big)dx=1.
\end{equation*}
Thanks to the Fatou lemma, we deduce from the condition $(F_2)$ and \eqref{eq9996} that
\begin{equation*}
\lim_{R\to+\infty}\int_{\bbr^N}\frac{F(x,R\widetilde{u})}{(R\widetilde{u})^2}\widetilde{u}^2dx
\geq\int_{\bbr^N}\lim_{R\to+\infty}\frac{F(x,R\widetilde{u})}{R^2\widetilde{u}^2}\widetilde{u}^2dx=\frac{l_\infty}{2}\|\widetilde{u}\|_{L^2(\bbr^N)}^2
\geq\frac{l_\infty}{2 d_*}.
\end{equation*}
Since $\frac{l_\infty}{ d_*}>1-\frac{1}{\beta_{k_0^*}}$, by \eqref{eq9994} and \eqref{eq9995}, there exist $\widetilde{\Lambda}_0\geq\Lambda^*_2$ and $\widetilde{R}_0>\widetilde{\rho}_0$ independent of $\lambda$ such that $\sup_{\partial \mathcal{Q}_{\lambda,R_0}}\mathcal{E}_\lambda(u)\leq0$ for all $\lambda>\widetilde{\Lambda}_0$.
\end{proof}

By Lemmas~\ref{lem0007} and \ref{lem0008}, we know that $\mathcal{E}_\lambda(u)$ has a linking structure in $E_\lambda$ for all$\lambda>\max\{\overline{\Lambda}_0, \widetilde{\Lambda}_0\}$ in the case of $\min\{a_0, b_0\}<0$.  By the well known linking theorem, $\mathcal{E}_\lambda(u)$ has a Cerami sequence at level $c_\lambda$ ($(C)_{c_\lambda}$ sequence for short) for all $\lambda>\max\{\overline{\Lambda}_0, \widetilde{\Lambda}_0\}$, that is, there exists $\{u_{\lambda,n}\}\subset E_\lambda$ with $\lambda>\max\{\overline{\Lambda}_0, \widetilde{\Lambda}_0\}$ such that
\begin{equation*}
\mathcal{E}_\lambda(u_{\lambda,n})=c_\lambda+o_n(1)\quad\text{and}\quad (1+\|u_{\lambda,n}\|_\lambda)\mathcal{E}_\lambda'(u_{\lambda,n})=o_n(1)\text{ strongly in }E_\lambda^*.
\end{equation*}
In the special case $k_0^*=1$, the linking structure is actually the mountain pass geometry and the linking theorem can be replaced by the well known mountain pass theorem.
Moreover, due to the conditions $(F_3)$ and $(F_4)$, we can see that $c_\lambda\in[\min\{\kappa_0, \widetilde{\kappa}_0\}, \frac12\max\{R_0^2, \widetilde{R}_0^2\}]$.

We next consider the case of $\min\{a_0,b_0\}\geq0$.
\begin{lemma}\label{lem0020}
Suppose that the conditions $(F_1)$--$(F_2)$ hold and $\min\{a_0,b_0\}\geq0$.  Then there exists $l_\infty^*>0$ such that
\begin{equation*}
\inf_{\mathbb{B}_{\rho_0^*}}\mathcal{E}_\lambda(u)\geq\kappa_0^*\quad\text{and}\quad\mathcal{E}_\lambda(R_0^*\phi_1)\leq0
\end{equation*}
hold for $\lambda>\max\{0, -\frac{b_0}{b_\infty}\}$ with some $R_0^*>\rho_0^*>0$ and $\kappa_0^*>0$ independent of $\lambda$ in the following two cases:
\begin{enumerate}
\item[$(a)$] $p>2$;
\item[$(b)$] $p=2$ and $l_\infty>l_\infty^*$.
\end{enumerate}
\end{lemma}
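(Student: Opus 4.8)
The plan is to verify the mountain pass geometry of $\mathcal{E}_\lambda$ directly, since in the regime $\min\{a_0,b_0\}\geq0$ the form $\mathcal{G}_\lambda$ vanishes and hence $\mathcal{D}_\lambda(u,u)=\|u\|_\lambda^2$ is positive definite, so the functional reduces to $\mathcal{E}_\lambda(u)=\frac12\|u\|_\lambda^2-\int_{\bbrn}F(x,u)dx$. There is no negative subspace here, so the linking structure of Lemmas~\ref{lem0007}--\ref{lem0008} collapses to the ordinary mountain pass picture: it suffices to produce (i) a small sphere $\mathbb{B}_{\rho_0^*}$ on which $\mathcal{E}_\lambda$ is bounded below by a positive constant, and (ii) a single point $R_0^*\phi_1$ at which $\mathcal{E}_\lambda\leq0$. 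The essential requirement throughout is that $\rho_0^*,\kappa_0^*$ and $R_0^*$ be chosen uniformly in $\lambda$.

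For the sphere estimate (which is common to cases $(a)$ and $(b)$) I would first use $(F_1)$--$(F_2)$ to fix, for small $\delta>0$, a bound $0\leq F(t)\leq\frac{(1+\delta)l_0}{2}|t|^2+C_\delta|t|^{2^*}$, the critical exponent serving only to dominate the subcritical growth. Inserting this into $\mathcal{E}_\lambda$ and estimating the two integrals by \eqref{eq0020} and the Sobolev inequality together with \eqref{eq0010} yields
\[
\mathcal{E}_\lambda(u)\geq\frac12\bigl(1-(1+\delta)l_0C_\lambda\bigr)\|u\|_\lambda^2-C'C_\lambda^{\frac{2^*}{4}}\|u\|_\lambda^{2^*}.
\]
Since $C_\lambda$ stays bounded and converges to $d_0$ as $\lambda\to+\infty$, the quadratic coefficient remains bounded away from $0$ once $l_0$ is below a threshold $\overline{l}_0$ of size $\approx d_0^{-1}$; this is exactly the smallness of $l_0$ recorded in Theorems~\ref{thm0001}--\ref{thm0002}. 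Choosing $\rho_0^*$ small and independent of $\lambda$ then gives $\inf_{\mathbb{B}_{\rho_0^*}}\mathcal{E}_\lambda\geq\kappa_0^*>0$ with $\kappa_0^*$ uniform.

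For the far point I would test with $R\phi_1$, exploiting $(B_3)$: since $\phi_1$ is supported in $\Omega=\text{int}\,b^{-1}(0)$ and $b\equiv0$ there, the quantities $\|\phi_1\|_\lambda^2=\int_\Omega(|\Delta\phi_1|^2+a_0|\nabla\phi_1|^2+b_0\phi_1^2)dx$ and $\int_{\bbrn}F(x,R\phi_1)dx=\int_\Omega F(x,R\phi_1)dx$ are both independent of $\lambda$, which is what makes $R_0^*$ uniform. In case $(a)$, $(F_2)$ with $p>2$ gives, on a fixed subset $\{\phi_1\geq\eta\}$ of $\Omega$, the lower bound $F(x,R\phi_1)\geq cR^p$ for $R$ large, so $\int_\Omega F(x,R\phi_1)dx$ grows like $R^p$; as $p>2$ this dominates $\frac12R^2\|\phi_1\|_\lambda^2$ and $\mathcal{E}_\lambda(R\phi_1)\to-\infty$, so any large enough $R_0^*$ works.

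The decisive case is $(b)$, $p=2$, where the competition is purely quadratic. Here $F(t)/t^2\to l_\infty/2$ and the global bound $|F(t)|\leq Ct^2$ lets me pass to the limit by dominated convergence, obtaining $\int_\Omega F(x,R\phi_1)/R^2\,dx\to\frac{l_\infty}{2}\int_\Omega\phi_1^2\,dx$. Using $-\Delta\phi_1=\mu_1\phi_1$ one computes $\|\phi_1\|_\lambda^2=(\mu_1^2+a_0\mu_1+b_0)\int_\Omega\phi_1^2\,dx$, whence
\[
\limsup_{R\to+\infty}\frac{\mathcal{E}_\lambda(R\phi_1)}{R^2}\leq\frac12\bigl(\mu_1^2+a_0\mu_1+b_0-l_\infty\bigr)\int_\Omega\phi_1^2\,dx,
\]
which is negative precisely when $l_\infty>l_\infty^*:=\mu_1^2+a_0\mu_1+b_0$; this pins down the constant $l_\infty^*$ (note it is the first element of $\sigma(\Delta^2-a_0\Delta+b_0,L^2(\Omega))$) and again produces a $\lambda$-independent $R_0^*$ with $\mathcal{E}_\lambda(R_0^*\phi_1)\leq0$. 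I expect the main obstacle to lie in the uniform-in-$\lambda$ bookkeeping of both steps: extracting $\overline{l}_0$ so the sphere coefficient stays positive over the admissible range of $\lambda$, and, in case $(b)$, identifying $l_\infty^*$ through the eigenfunction identities while rigorously justifying the limit of $\int_\Omega F(x,R\phi_1)/R^2\,dx$ — it is there that the hypothesis $l_\infty>l_\infty^*$ is seen to be the sharp condition.
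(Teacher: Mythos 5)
Your proposal is correct and follows essentially the same route as the paper, whose proof of this lemma simply observes that $\mathcal{G}_\lambda\equiv0$ makes $\mathcal{D}_\lambda(u,u)=\|u\|_\lambda^2$ definite and then invokes ``similar but more simple arguments'' from Lemmas~\ref{lem0007} and \ref{lem0008}; you have carried out exactly those simplified mountain-pass estimates. Your explicit identification $l_\infty^*=\mu_1^2+a_0\mu_1+b_0$ via $\|\phi_1\|_\lambda^2=(\mu_1^2+a_0\mu_1+b_0)\|\phi_1\|_{L^2(\Omega)}^2$ is a welcome sharpening that the paper leaves implicit (it is consistent with the subsequent bound $c_\lambda\leq\frac12(R_0^*)^2(\overline{\mu}_1^2+a_0\overline{\mu}_1+b_0)$).
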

\begin{proof}
Since $\mathcal{D}_\lambda(u,v)=\langle u,v\rangle_{\lambda}$ for all $(u,v)\in E_\lambda$ and $\mathcal{D}_\lambda(u,u)$ is definite on $E_\lambda$ for $\lambda>\max\{0, -\frac{b_0}{b_\infty}\}$ in the case of $\min\{a_0,b_0\}\geq0$, we can get the conclusions by similar but more simple arguments used in the proofs of  Lemmas~\ref{lem0007} and \ref{lem0008}.
\end{proof}

Due to Lemma~\ref{lem0020}, we can see that $\mathcal{E}_\lambda(u)$ has a mountain pass geometry for $\lambda>\max\{0, -\frac{b_0}{b_\infty}\}$ in the case of $\min\{a_0,b_0\}\geq0$.  It follows from the well known mountain pass theorem, $\mathcal{E}_\lambda(u)$ has a $(C)_{c_\lambda}$ sequence for all $\lambda>\max\{0, -\frac{b_0}{b_\infty}\}$.  Furthermore, $c_\lambda\in[\kappa_0^*, \frac12(R_0^*)^2(\overline{\mu}_1^2+a_0\overline{\mu}_1+b_0)]$.  In another word, there exists $\overline{\Lambda}_{*,0}>0$ such that $\mathcal{E}_\lambda(u)$ always has a $(C)_{c_\lambda}$ sequence for $\lambda>\overline{\Lambda}_{*,0}$ and $c_\lambda\in[C, C']$ in both cases of $\min\{a_0,b_0\}\geq0$ and $\min\{a_0,b_0\}<0$.
\begin{lemma}\label{lem0009}
Suppose that the conditions $(B_1)$--$(B_3)$ and $(F_1)$--$(F_2)$ and $(F_4)$ hold with $p>2$.  Then there exist $\overline{\Lambda}_1>\overline{\Lambda}_{*,0}$ and $C_0>0$ independent of $\lambda$ such that $\|u_{\lambda,n}\|_\lambda\leq C_0+o_n(1)$ for all $\lambda>\overline{\Lambda}_1$.
\end{lemma}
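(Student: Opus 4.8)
The plan is to first extract a uniform $L^p$ bound from condition $(F_4)$ and then convert it, via the spectral splitting of $E_\lambda$ and the estimates of Lemma~\ref{lem0006}, into a bound on $\|u_{\lambda,n}\|_\lambda$. Write $u_n=u_{\lambda,n}$ for brevity, and decompose $u_n=u_n^-+u_n^+$ with $u_n^-\in\bigoplus_{i=1}^{k_0^*-1}\mathcal{N}_{\lambda,i}$ and $u_n^+\in(\mathcal{M}_\lambda^{k_0^*-1})^{\perp}$ (in the definite case $\min\{a_0,b_0\}\geq0$ one simply has $u_n^-=0$ and $\mathcal{D}_\lambda(u,u)=\|u\|_\lambda^2$). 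Since $\{u_n\}$ is a $(C)_{c_\lambda}$ sequence and $\|u_n^{\pm}\|_\lambda\leq\|u_n\|_\lambda$, the Cerami condition gives $\mathcal{E}_\lambda'(u_n)w=o_n(1)$ for $w\in\{u_n,\,u_n^+-u_n^-\}$. Computing $\mathcal{E}_\lambda(u_n)-\tfrac12\mathcal{E}_\lambda'(u_n)u_n$, the quadratic form $\mathcal{D}_\lambda$ cancels and $(F_4)$ yields
\begin{equation*}
\frac{l_*}{2}\|u_n\|_{L^p(\bbrn)}^p\leq\frac12\int_{\bbrn}\big(f(u_n)u_n-2F(u_n)\big)dx=\mathcal{E}_\lambda(u_n)-\tfrac12\mathcal{E}_\lambda'(u_n)u_n=c_\lambda+o_n(1).
\end{equation*}
As $c_\lambda\leq C'$ uniformly in $\lambda$, this bounds $\|u_n\|_{L^p(\bbrn)}$ by a constant $M$ independent of $\lambda$ (up to $o_n(1)$).

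Next I would use that the generalized eigenfunctions $e_i(\lambda)$ solve \eqref{eq0022}, i.e.\ $\langle e_i,w\rangle_\lambda=\beta_i(\lambda)\mathcal{G}_\lambda(e_i,w)$ for all $w\in E_\lambda$; this makes $\mathcal{D}_\lambda$ block-diagonal for the splitting $E_\lambda=\bigoplus_{i=1}^{k_0^*-1}\mathcal{N}_{\lambda,i}\oplus(\mathcal{M}_\lambda^{k_0^*-1})^{\perp}$, so that $\mathcal{D}_\lambda(u_n,u_n^+-u_n^-)=\mathcal{D}_\lambda(u_n^+,u_n^+)-\mathcal{D}_\lambda(u_n^-,u_n^-)$. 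Lemma~\ref{lem0006} then gives, for $\lambda$ large,
\begin{equation*}
\mathcal{D}_\lambda(u_n,u_n^+-u_n^-)\geq\Big(1-\tfrac1{\beta_{k_0^*}^0}+o_\lambda(1)\Big)\|u_n^+\|_\lambda^2+\Big(\tfrac1{\beta_{k_0^*-1}^0}-1+o_\lambda(1)\Big)\|u_n^-\|_\lambda^2\geq\eta_0\|u_n\|_\lambda^2,
\end{equation*}
where $\eta_0=\min\{1-\tfrac1{\beta_{k_0^*}^0},\,\tfrac1{\beta_{k_0^*-1}^0}-1\}+o_\lambda(1)>0$ by $\beta_{k_0^*-1}^0<1<\beta_{k_0^*}^0$. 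On the other hand, $\mathcal{D}_\lambda(u_n,u_n^+-u_n^-)=\mathcal{E}_\lambda'(u_n)(u_n^+-u_n^-)+\int_{\bbrn}f(u_n)(u_n^+-u_n^-)dx$, whose first term is $o_n(1)$. For the second term I would use $(F_1)$--$(F_2)$ in the form $|f(t)|\leq(l_0+\ve)|t|+C_\ve|t|^{p-1}$ together with \eqref{eq0020} and \eqref{eq0010}, so that the $L^2$ and $L^p$ embedding constants of $E_\lambda$ are uniform with $C_\lambda\to d_0$: the superlinear part is controlled by $C\|u_n\|_{L^p(\bbrn)}^{p-1}\|u_n^+-u_n^-\|_{L^p(\bbrn)}\leq C'M^{p-1}\|u_n\|_\lambda$, and the linear part by $(l_0+\ve)\|u_n\|_{L^2(\bbrn)}\|u_n^+-u_n^-\|_{L^2(\bbrn)}\leq(l_0+\ve)(d_0+o_\lambda(1))\|u_n\|_\lambda^2$.

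Combining the two estimates produces the quadratic inequality
\begin{equation*}
\big(\eta_0-(l_0+\ve)(d_0+o_\lambda(1))\big)\|u_n\|_\lambda^2\leq C'M^{p-1}\|u_n\|_\lambda+o_n(1).
\end{equation*}
Here the smallness of $l_0$ built into the framework (the threshold $\overline{l}_0$ of Theorem~\ref{thm0001}, which I take small enough that $l_0d_0<\eta_0$) is exactly what keeps the bracketed coefficient bounded below by a positive constant $\eta>0$ for all large $\lambda$ and small $\ve$; solving in $\|u_n\|_\lambda$ gives $\limsup_n\|u_n\|_\lambda\leq C'M^{p-1}/\eta=:C_0$, independent of $\lambda$. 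I expect the main obstacle to be precisely this uniform control of the linear part of $\int_{\bbrn}f(u_n)(u_n^+-u_n^-)dx$: one must pair the uniform-in-$\lambda$ embedding $E_\lambda\hookrightarrow L^2(\bbrn)$ whose constant tends to $d_0$ (from Section~2) against the positive spectral gap $\eta_0$ supplied by Lemma~\ref{lem0006}, and verify that every constant ($\eta_0$, $M$, the embedding constants) is genuinely uniform once $\lambda$ exceeds some $\overline{\Lambda}_1>\overline{\Lambda}_{*,0}$. The definite case $\min\{a_0,b_0\}\geq0$ is the same argument with $u_n^-=0$, and there the coefficient reduces to $1-(l_0+\ve)d_0>0$.
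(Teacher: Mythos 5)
Your argument is correct in its essentials, but it takes a genuinely different route from the paper's. You use the full strength of the Cerami condition twice: once with the test function $u_n$ to extract the uniform $L^p$ bound from $(F_4)$ (this step coincides with the paper's), and once with the test function $u_n^+-u_n^-$, exploiting the $\mathcal{D}_\lambda$-orthogonality of the splitting and the spectral gap of Lemma~\ref{lem0006} to get the coercivity $\mathcal{D}_\lambda(u_n,u_n^+-u_n^-)\geq\eta_0\|u_n\|_\lambda^2$. The paper instead never touches the decomposition here: it works only with the energy identity $\mathcal{E}_\lambda(u_{\lambda,n})=c_\lambda+o_n(1)$ and shows directly, by splitting $\|u\|_{L^2}$ over $\mathcal{B}_\infty$ and its complement (H\"older on the finite-measure set $\mathcal{B}_\infty$ with $p>2$, Gagliardo--Nirenberg, and Young), that $\mathcal{G}_\lambda(u,u)\leq(\tfrac12+o_\lambda(1))\|u\|_\lambda^2+C\|u\|_{L^p(\bbrn)}^2$ and $|\int_{\bbrn}F(u)dx|\leq o_\lambda(1)\|u\|_\lambda^2+C\|u\|_{L^p(\bbrn)}^2+C\|u\|_{L^p(\bbrn)}^p$; the factor $\tfrac12<1$ in front of $\|u\|_\lambda^2$, halved again by the $\tfrac12$ in $\mathcal{E}_\lambda$, leaves a positive coefficient with no spectral input and, notably, \emph{no smallness condition on $l_0$} --- which is why the lemma as stated carries no hypothesis on $l_0$. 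Your route, by contrast, needs $(l_0+\ve)d_0<\eta_0=\min\{1-1/\beta_{k_0^*}^0,\,1/\beta_{k_0^*-1}^0-1\}+o_\lambda(1)$, which is strictly more restrictive than the condition $l_0d_0<1-1/\beta_{k_0^*}^0$ already imposed in Lemma~\ref{lem0007}; since $\overline{l}_0$ in Theorem~\ref{thm0001} is at the authors' disposal this does not damage the final theorem, but it does make your version of the lemma conditional where the paper's is not. (A further small point you inherit from the paper: $(\mathcal{M}_\lambda^{k_0^*-1})^{\perp}$ is literally a subset of the constraint set $\mathcal{M}_\lambda$, so the "decomposition" should be read as the splitting of $E_\lambda$ into $\bigoplus_{i=1}^{k_0^*-1}\mathcal{N}_{\lambda,i}$ and its $\langle\cdot,\cdot\rangle_\lambda$-orthogonal complement, on which the estimates of Lemma~\ref{lem0006} extend by homogeneity and by the observation that $\mathcal{D}_\lambda(u,u)=\|u\|_\lambda^2$ when $\mathcal{G}_\lambda(u,u)=0$.) In exchange, your argument is the standard, transferable linking-type boundedness proof, while the paper's relies on the specific structure $|\mathcal{B}_\infty|<+\infty$ and $p>2$ to interpolate the indefinite part against the $L^p$ norm.
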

\begin{proof}
By the condition $(F_4)$, we have
\begin{eqnarray*}
o_n(1)+c_\lambda&\geq&\mathcal{E}_{\lambda}(u_{\lambda,n})-\frac{1}{2}\langle\mathcal{E}_{\lambda}(u_{\lambda,n}), u_{\lambda,n}\rangle_{E_\lambda^*, E_\lambda}\\
&=&\frac12\int_{\bbr^N}(f(u_{\lambda,n})u_{\lambda,n}-2F(u_{\lambda,n}))dx\\
&\geq&\frac{l_*}{2}\|u_{\lambda,n}\|_{L^p(\bbr^N)}^p.
\end{eqnarray*}
On the other hand, due to the conditions $(B_1)$--$(B_2)$, for all $\lambda>\overline{\Lambda}_{*,0}$, we get from the H\"older and the Gagliardo-Nirenberg inequalities that
\begin{eqnarray*}
\mathcal{G}_\lambda(u_{\lambda,n},u_{\lambda,n})&\leq&\max\{-a_0, 0\}B_0^2\|\Delta u_{\lambda,n}\|_{L^2(\bbr^N)}\|u_{\lambda,n}\|_{L^2(\bbr^N)}\\
&&+\max\{-b_0, 0\}|\mathcal{B}_\infty|^{\frac{p-2}{p}}\|u_{\lambda,n}\|_{L^p(\bbr^N)}^2\\
&\leq&\max\{-a_0, 0\}B_0^2|\mathcal{B}_\infty|^{\frac{p-2}{2p}}\|\Delta u_{\lambda,n}\|_{L^2(\bbr^N)}\|u_{\lambda,n}\|_{L^p(\bbr^N)}\\
&&+\max\{-a_0, 0\}B_0^2(\frac{1}{\lambda b_\infty+b_0})^{\frac12}\|u_{\lambda,n}\|_\lambda^2+\max\{-b_0, 0\}|\mathcal{B}_\infty|^{\frac{p-2}{p}}\|u_{\lambda,n}\|_{L^p(\bbr^N)}^2\\
&\leq&(\frac12+\max\{-a_0, 0\}B_0^2(\frac{1}{\lambda b_\infty+b_0})^{\frac12})\|u_{\lambda,n}\|_\lambda^2\\
&&+(2\max\{-a_0, 0\}^2B_0^4+\max\{-b_0, 0\})|\mathcal{B}_\infty|^{\frac{p-2}{p}}\|u_{\lambda,n}\|_{L^p(\bbr^N)}^2.
\end{eqnarray*}
It implies from the conditions $(B_2)$, $(F_1)$--$(F_2)$ and the H\"older inequality that for all $\lambda>\overline{\Lambda}_{*,0}$,
\begin{eqnarray*}
|\int_{\bbr^N}F(u_{\lambda,n})dx|&\leq&2l_0\|u_{\lambda,n}\|_{L^2(\bbr^N)}^2+C\|u_{\lambda,n}\|_{L^p(\bbr^N)}^p\\
&\leq&\frac{2l_0}{\lambda b_\infty+b_0}\|u_{\lambda,n}\|_\lambda^2+2l_0|\mathcal{B}_\infty|^{\frac{p-2}{p}}\|u_{\lambda,n}\|_{L^p(\bbr^N)}^2+C\|u_{\lambda,n}\|_{L^p(\bbr^N)}^p.
\end{eqnarray*}
Now, we can see from $\mathcal{E}_\lambda(u_{\lambda,n})=c_\lambda+o_n(1)$ that
\begin{equation*}
(\frac12+o_\lambda(1))\|u_{\lambda,n}\|_\lambda^2\leq C'c_\lambda+o_n(1).
\end{equation*}
Note that $c_\lambda\in[C, C']$, there exist $\overline{\Lambda}_1>\overline{\Lambda}_{*,0}$ and $C_0>0$ independent of $\lambda$ such that $\|u_{\lambda,n}\|_\lambda\leq C_0+o_n(1)$ for all $\lambda>\overline{\Lambda}_1$.
\end{proof}

\begin{lemma}\label{lem0010}
Suppose that the conditions $(B_1)$--$(B_3)$ and $(F_1)$--$(F_3)$ hold with $p=2$.  If $l_\infty\not\in\sigma(\Delta^2-a_0\Delta+b_0, L^2(\Omega))$, then there exists $\widetilde{\Lambda}_1>\overline{\Lambda}_{*,0}$ such that $\{u_{\lambda,n}\}$ is bounded in $E_\lambda$ for all $\lambda>\widetilde{\Lambda}_1$.
\end{lemma}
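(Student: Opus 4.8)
The plan is to argue by contradiction, reducing the unboundedness of the Cerami sequence to a resonance of the limiting operator on $\Omega$ at the level $l_\infty$. Suppose the assertion fails; negating it produces a sequence $\lambda_j\to+\infty$ such that, for each $j$, the $(C)_{c_{\lambda_j}}$ sequence $\{u_{\lambda_j,n}\}_n$ is \emph{unbounded} in $E_{\lambda_j}$. Since $c_\lambda\in[C,C']$ uniformly for $\lambda>\overline{\Lambda}_{*,0}$ and each sequence is Cerami, a diagonal choice of indices $n_j$ yields $u_j:=u_{\lambda_j,n_j}$ with $\|u_j\|_{\lambda_j}\to+\infty$, with $\mathcal{E}_{\lambda_j}(u_j)$ bounded, and with $(1+\|u_j\|_{\lambda_j})\|\mathcal{E}_{\lambda_j}'(u_j)\|_{E_{\lambda_j}^*}=o(1)$. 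Setting $w_j=u_j/\|u_j\|_{\lambda_j}$, so that $\|w_j\|_{\lambda_j}=1$, the plan is to show $w_j$ tends to a solution of a linear eigenvalue problem and then to derive a contradiction with the normalization.

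First I would establish compactness. By \eqref{eq0020} and \eqref{eq0010}, with $C_{\lambda_j}$ bounded as $\lambda_j\to+\infty$, the sequence $\{w_j\}$ is bounded in $\h$, so up to a subsequence $w_j\rightharpoonup w_*$ weakly in $\h$. Repeating the steep-well argument of Lemma~\ref{lem0001}: from $\|w_j\|_{\lambda_j}^2=1$ one gets $\int_{\bbrn}b(x)w_j^2\,dx\leq C/\lambda_j\to0$, whence the Fatou estimate \eqref{eq0007} forces $w_*=0$ a.e. on $\{b>0\}$, i.e.\ $w_*\in H$; splitting $\bbrn$ into $\{b\geq b_\infty\}$ (where $\int w_j^2\leq(\lambda_jb_\infty+b_0)^{-1}\to0$) and the finite-measure set $\mathcal{B}_\infty$ gives $w_j\to w_*$ strongly in $L^2(\bbrn)$, and then the Gagliardo--Nirenberg inequality yields $w_j\to w_*$ strongly in $H^1(\bbrn)$. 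Dividing the Cerami identity by $\|u_j\|_{\lambda_j}$ gives, for all $\phi\in E_{\lambda_j}$, the relation $\mathcal{D}_{\lambda_j}(w_j,\phi)-\int_{\bbrn}\frac{f(u_j)}{u_j}w_j\phi\,dx=o(1)$. Here $(F_1)$--$(F_3)$ yield $0\leq f(t)/t\leq l_\infty$ with $f(t)/t\to l_\infty$ as $|t|\to\infty$. Testing with $\phi\in C_0^\infty(\Omega)$ (for which $\|\phi\|_{\lambda_j}$ is independent of $j$, since $b\equiv0$ on $\Omega$) and letting $j\to\infty$, the weak $\h$-convergence gives $\mathcal{D}_{\lambda_j}(w_j,\phi)\to\int_\Omega(\Delta w_*\Delta\phi+a_0\nabla w_*\nabla\phi+b_0w_*\phi)\,dx$, while on $\mathrm{supp}\,\phi$ one has $|u_j|=\|u_j\|_{\lambda_j}|w_j|\to\infty$ a.e.\ where $w_*\neq0$, so a generalized dominated convergence (the integrand being bounded by $l_\infty|w_j||\phi|$ with $w_j\to w_*$ in $L^2$) gives the nonlinear term $\to l_\infty\int_\Omega w_*\phi\,dx$. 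Thus $w_*\in H$ is a weak solution of $\Delta^2w_*-a_0\Delta w_*+b_0w_*=l_\infty w_*$ on $\Omega$ with $w_*=\Delta w_*=0$ on $\partial\Omega$; since $l_\infty\not\in\sigma(\Delta^2-a_0\Delta+b_0,L^2(\Omega))$, this forces $w_*=0$.

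Finally I would close the contradiction. With $w_*=0$, testing the scaled identity against $\phi=w_j$ gives $\mathcal{D}_{\lambda_j}(w_j,w_j)=\int_{\bbrn}\frac{f(u_j)}{u_j}w_j^2\,dx+o(1)\leq l_\infty\|w_j\|_{L^2(\bbrn)}^2+o(1)\to0$. On the other hand $\mathcal{D}_{\lambda_j}(w_j,w_j)=1-\mathcal{G}_{\lambda_j}(w_j,w_j)$, and since $\nabla w_j\to0$ in $L^2(\bbrn)$ while $(\lambda_jb(x)+b_0)^-\leq\max\{-b_0,0\}$ with $w_j\to0$ in $L^2(\bbrn)$ (cf.\ \eqref{eq0011}), one has $\mathcal{G}_{\lambda_j}(w_j,w_j)\to0$, hence $\mathcal{D}_{\lambda_j}(w_j,w_j)\to1$. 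This yields $0=1$, the desired contradiction, so the threshold $\widetilde{\Lambda}_1$ exists. The step I expect to be the main obstacle is the passage to the limit in the nonlinear term together with the correct identification of the limiting Navier boundary-value problem on $\Omega$: one must ensure that the weak limit lands in $H$ with the boundary conditions matching those generating $\sigma(\Delta^2-a_0\Delta+b_0,L^2(\Omega))$, so that the non-resonance hypothesis truly applies. This is exactly the point where the compactness of the steep potential well (the strong $L^2(\bbrn)$ convergence) and the concentration on $\Omega$ furnished by Lemma~\ref{lem0001} and its analogues are indispensable.
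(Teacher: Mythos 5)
Your proposal is correct, and it reaches the conclusion by a genuinely leaner route than the paper. The paper's proof is organized around three claims for the normalized sequence $w_{\lambda,n}=u_{\lambda,n}/\|u_{\lambda,n}\|_\lambda$ at \emph{fixed} large $\lambda$: Claim~1 shows the weak limit $w_{\lambda,0}$ is nonzero (using the decomposition $\bigoplus_{i=1}^{k_0^*-1}\mathcal{N}_{\lambda,i}$, Remark~\ref{rmk0001} and the spectral gap of Lemma~\ref{lem0006}), Claim~2 upgrades to strong $E_\lambda$-convergence so that $w_{\lambda,0}$ solves the $\lambda$-dependent eigenvalue problem \eqref{eq0027} with $\|w_{\lambda,0}\|_\lambda=1$, and only then does Claim~3 send $\lambda\to+\infty$ to land on \eqref{eq0026} and contradict the non-resonance hypothesis. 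You instead diagonalize immediately in $(\lambda,n)$ and dispose of the vanishing alternative by an elementary norm computation: if $w_*=0$, the strong $L^2(\bbr^N)$ convergence furnished by the steep well gives $\mathcal{G}_{\lambda_j}(w_j,w_j)\leq \max\{-a_0,0\}B_0^2\|\Delta w_j\|_{L^2(\bbr^N)}\|w_j\|_{L^2(\bbr^N)}+\max\{-b_0,0\}\|w_j\|_{L^2(\bbr^N)}^2\to0$, hence $\mathcal{D}_{\lambda_j}(w_j,w_j)=1-\mathcal{G}_{\lambda_j}(w_j,w_j)\to1$, while the scaled Cerami identity and $0\leq f(t)/t\leq l_\infty$ force $\mathcal{D}_{\lambda_j}(w_j,w_j)\leq l_\infty\|w_j\|_{L^2(\bbr^N)}^2+o(1)\to0$. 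This replaces the paper's Claims~1--2 entirely, needs no spectral decomposition, and treats $\min\{a_0,b_0\}\geq0$ and $\min\{a_0,b_0\}<0$ uniformly; what it gives up is the intermediate information (strong $E_\lambda$-convergence and the explicit equation \eqref{eq0027} for each fixed $\lambda$) that the paper recycles in Claim~3 and again in the proofs of Theorems~\ref{thm0002} and \ref{thm0003}. One caveat, which you rightly single out and which is equally present in the paper's own Claim~3: testing only against $C_0^\infty(\Omega)$ identifies the limit equation merely distributionally, and since the limit lies in $H^2(\bbr^N)$ and vanishes outside $\Omega$ it in fact has vanishing normal derivative on $\partial\Omega$; matching this with the Navier spectrum $\{\mu_k^2+a_0\mu_k+b_0\}$ that defines $\sigma(\Delta^2-a_0\Delta+b_0,L^2(\Omega))$ requires an extra word (e.g.\ testing \eqref{eq0027} against extensions of the Navier eigenfunctions and controlling $\int_{\bbr^N}\lambda b(x)w_{\lambda,0}v\,dx$). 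Since your argument only needs non-resonance to conclude $w_*=0$, and the vanishing case is then killed by the normalization, your proof is no more exposed to this subtlety than the original.
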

\begin{proof}
Suppose on the contrary that there exists a subsequence of $\{u_{\lambda,n}\}$, which is still denoted by $\{u_{\lambda,n}\}$, such that $\|u_{\lambda,n}\|_\lambda\to+\infty$ as $n\to+\infty$.  Let $w_{\lambda,n}=\frac{u_{\lambda,n}}{\|u_{\lambda,n}\|_\lambda}$.  Then without loss of generality, we may assume that $w_{\lambda,n}\rightharpoonup w_{\lambda, 0}$ weakly in $E_\lambda$ for some $w_{\lambda, 0}\in E_\lambda$ as $n\to\infty$.

{\bf Claim~1:}\quad There exists $\Lambda^*_3>\overline{\Lambda}_{*,0}$ such that $w_{\lambda, 0}\not=0$.

Indeed, if $w_{\lambda, 0}=0$, then by Remark~\ref{rmk0001}, we can see that $w_{\lambda,n}^-\to0$ strongly in $E_\lambda$ with $\lambda>\overline{\Lambda}_{*,0}$ as $n\to\infty$, where $w_{\lambda,n}^-$ is the projection of $w_{\lambda,n}$ in $\bigoplus_{i=1}^{k_0^*-1}\mathcal{N}_{\lambda,i}$.  It follows from Lemma~\ref{lem0006} that
\begin{equation}\label{eq0024}
\mathcal{D}_\lambda(w_{\lambda,n},w_{\lambda,n})\geq\bigg(1-\frac{1}{\beta_{k_0^*}^0}+o_\lambda(1)\bigg)\|w_{\lambda,n}^+\|_\lambda^2+o_n(1),
\end{equation}
where $w_{\lambda,n}^+=w_{\lambda,n}-w_{\lambda,n}^-$.  On the other hand, thanks to the conditions $(F_2)$--$(F_3)$, we see from $(1+\|u_{\lambda,n}\|_\lambda)\mathcal{E}_\lambda'(u_{\lambda,n})=o_n(1)$ strongly in $E_\lambda^*$ that
\begin{equation*}
\mathcal{D}_\lambda(w_{\lambda,n},w_{\lambda,n})=o_n(1)+\int_{\bbr^N}\frac{f(u_{\lambda,n})}{u_{\lambda,n}}w_{\lambda,n}^2dx\leq o_n(1)+l_\infty\|w_{\lambda,n}\|_{L^2(\bbr^N)}^2,
\end{equation*}
which, together with the Sobolev embedding theorem, the fact that $E_\lambda$ is continuously embedded into $\h$ for $\lambda>\max\{0, \frac{-b_0}{b_\infty}\}$ and the condition $(B_2)$, implies that
\begin{eqnarray}\label{eq0025}
\mathcal{D}_\lambda(w_{\lambda,n},w_{\lambda,n})\leq\frac{4l_\infty}{\lambda b_\infty+b_0}\|w_{\lambda,n}^+\|_\lambda^2+o_n(1).
\end{eqnarray}
Thanks to Lemma~\ref{lem0005}, we can deduce from \eqref{eq0024} and \eqref{eq0025} that there exists $\Lambda^*_3>\overline{\Lambda}_{*,0}$ such that $w_{\lambda,n}^+\to0$ strongly in $E_\lambda$ as $n\to\infty$ for $\lambda>\Lambda^*_3$, which is inconsistent with $\|w_{\lambda,n}\|_\lambda=1$ for all $n\in\bbn$.

{\bf Claim~2:}\quad There exists $\Lambda^*_4>\Lambda^*_3$ such that $w_{\lambda,n}\to w_{\lambda, 0}$ strongly in $E_\lambda$ as $n\to\infty$ for $\lambda>\Lambda^*_4$ up to a subsequence.

In fact, let $\mathcal{Q}_{\lambda,0}=\{x\in\bbr^N: w_{\lambda, 0}\not=0\}$, then $|u_{\lambda, n}|\to+\infty$ as $n\to\infty$ on $\mathcal{Q}_{\lambda,0}$.  It follows from the conditions $(F_2)$--$(F_3)$ and a variant of the
Lebesgue dominated convergence theorem (cf. \cite[Theorem~2.2]{PK74}) that
\begin{eqnarray*}
\lim_{n\to\infty}\int_{\bbr^N}\frac{f(u_{\lambda, n})v}{\|u_{\lambda, n}\|_\lambda}dx=\lim_{n\to\infty}\int_{\bbr^N}\frac{f(u_{\lambda, n})}{u_{\lambda, n}}w_{\lambda, n}v\chi_{\mathcal{Q}_{\lambda,0}}dx
=\int_{\bbr^N}l_\infty w_{\lambda, 0}vdx
\end{eqnarray*}
for every $v\in\h$.  Since $w_{\lambda,n}\rightharpoonup w_{\lambda, 0}$ weakly in $E_\lambda$ as $n\to\infty$, due to the fact that $E_\lambda$ is continuously embedded into $\h$ for $\lambda>\max\{0, \frac{-b_0}{b_\infty}\}$, we have that $\lim_{n\to\infty}\mathcal{D}_\lambda(w_{\lambda,n},v)=\mathcal{D}_\lambda(w_{\lambda, 0},v)$ for all $v\in\h$.
Thus, $w_{\lambda, 0}\in\h$ satisfies the following equation in the weak sense:
\begin{eqnarray}\label{eq0027}
\Delta^2 w_{\lambda, 0}-a_0\Delta w_{\lambda, 0}+(\lambda b(x)+b_0)w_{\lambda, 0}=l_\infty w_{\lambda, 0},\quad\text{in }\bbr^N.
\end{eqnarray}
Let $I_\lambda(u)=\frac12(\mathcal{D}_{\lambda}(u,u)-l_\infty\|u\|_{L^2(\bbr^N)^2})$.  Then $I_\lambda'(w_{\lambda, 0})=0$ in $E_\lambda^*$.
Now, by Remark~\ref{rmk0001} and a similar argument used in the proof of \eqref{eq0014}, we have
\begin{eqnarray}
o_n(1)&=&\langle\frac{\mathcal{E}_\lambda'(u_{\lambda, n})}{\|u_{\lambda,n}\|_\lambda}-I_\lambda'(w_{\lambda, 0}), w_{\lambda, n}-w_{\lambda, 0}\rangle_{E_\lambda^*,E_\lambda}\notag\\
&\geq&\bigg(1-\frac{1}{\beta_{k_0^*}^0}+o_\lambda(1)\bigg)\|w_{\lambda, n}-w_{\lambda, 0}\|_\lambda^2+o_n(1)-\int_{\bbr^N}\frac{f(u_{\lambda, n})}{u_{\lambda, n}}|w_{\lambda, n}-w_{\lambda, 0}|^2dx\notag\\
&&-\int_{\bbr^N}(\frac{f(u_{\lambda, n})}{u_{\lambda, n}}-l_\infty)(w_{\lambda, n}-w_{\lambda, 0})w_{\lambda, 0}dx.\label{eq9993}
\end{eqnarray}
Note that $(w_{\lambda, n}-w_{\lambda, 0})w_{\lambda, 0}=o_n(1)$ strongly in $L^1(\bbr^N)$.  By the conditions $(F_2)$--$(F_3)$, the Sobolev embedding theorem and a variant of the
Lebesgue dominated convergence theorem (cf. \cite[Theorem~2.2]{PK74}), we can see that
\begin{eqnarray}
&&\int_{\bbr^N}\frac{f(u_{\lambda, n})}{u_{\lambda, n}}|w_{\lambda, n}-w_{\lambda, 0}|^2dx+\int_{\bbr^N}(\frac{f(u_{\lambda, n})}{u_{\lambda, n}}-l_\infty)(w_{\lambda, n}-w_{\lambda, 0})w_{\lambda, 0}dx\notag\\
&&\leq\frac{l_\infty}{\lambda b_\infty+b_0}\|w_{\lambda, n}-w_{\lambda, 0}\|_\lambda^2+o_n(1).\label{eq9992}
\end{eqnarray}
Combining \eqref{eq9993} and \eqref{eq9992}, we must have that there exists $\Lambda^*_4>\Lambda^*_3$ such that $w_{\lambda,n}\to w_{\lambda, 0}$ strongly in $E_\lambda$ as $n\to\infty$ for $\lambda>\Lambda^*_4$.

{\bf Claim~3:} \quad $w_{\lambda,0}\to w_{\infty,0}$ strongly in $H^1(\bbr^N)$ as $\lambda\to+\infty$ up to a subsequence for some $w_{\infty, 0}\in H$ which satisfies the following equation in the weak sense:
\begin{equation}\label{eq0026}
\Delta^2 w_{\infty, 0}-a_0\Delta w_{\infty, 0}+b_0w_{\infty, 0}=l_\infty w_{\infty, 0},\quad\text{in }\Omega.
\end{equation}

Indeed, since $\|w_{\lambda,n}\|_\lambda=1$, by \eqref{eq0020} and \eqref{eq0010} and a similar argument used in the proof of Lemma~\ref{lem0001}, we can show that $w_{\lambda, 0}\rightharpoonup w_{\infty, 0}$ weakly in $\h$ and $w_{\lambda, 0}\to w_{\infty, 0}$ strongly in $H^1(\bbr^N)$ for some $w_{\infty, 0}\in H$ with $w_{\infty, 0}=0$ outside $\Omega$ as $\lambda\to+\infty$, up to a subsequence.  It follows from \eqref{eq0027} that $w_{\infty, 0}\in H$ satisfies \eqref{eq0026} in the weak sense.

Now, multiplying respectively \eqref{eq0026} and \eqref{eq0027} with $w_{\infty, 0}$ and $w_{\lambda, 0}$, and integrating, we can see that
\begin{eqnarray*}
&&\|\Delta w_{\infty, 0}\|_{L^2(\bbr^N)}^2+a_0\|\nabla w_{\infty, 0}\|_{L^2(\bbr^N)}^2+b_0\|w_{\infty, 0}\|_{L^2(\bbr^N)}^2\\
&\leq&\lim_{\lambda\to+\infty}(\|w_{\lambda, 0}\|_\lambda^2+\mathcal{G}_\lambda(w_{\lambda, 0},w_{\lambda, 0}))\\
&=&l_\infty\lim_{\lambda\to+\infty}\|w_{\lambda, 0}\|_{L^2(\bbr^N)}^2\\
&=&l_\infty\|w_{\infty, 0}\|_{L^2(\bbr^N)}^2\\
&=&\|\Delta w_{\infty, 0}\|_{L^2(\bbr^N)}^2+a_0\|\nabla w_{\infty, 0}\|_{L^2(\bbr^N)}^2+b_0\|w_{\infty, 0}\|_{L^2(\bbr^N)}^2.
\end{eqnarray*}
Hence, $\int_{\bbr^N}\lambda b(x)w_{\lambda, 0}^2dx=o_\lambda(1)$ and $w_{\lambda, 0}\to w_{\infty, 0}$ strongly in $H^2(\bbr^N)$ as $\lambda\to+\infty$ up to a subsequence.  Therefore, by Claim~1 and Claim~2, $\|w_{\infty, 0}\|_{\Omega,0}=1$, which gives $w_{\infty, 0}\not=0$.  It follows from Claim~2 that $l_\infty\in\sigma(\Delta^2-a_0\Delta+b_0, L^2(\Omega))$, which contradicts the assumption that $l_\infty\not\in\sigma(\Delta^2-a_0\Delta+b_0, L^2(\Omega))$.  Thus, there exist $\widetilde{\Lambda}_1>\overline{\Lambda}_{*,0}$ such that $\{u_{\lambda,n}\}$ is bounded in $E_\lambda$ for all $\lambda>\widetilde{\Lambda}_1$.
\end{proof}

\begin{remark}{\em
Since span$\{\phi_k\}=H$ and $\phi_k$ are orthogonal in $H$, it is easy to show that $\sigma(\Delta^2-a_0\Delta+b_0, L^2(\Omega))=\{\mu_k^2+a_0\mu_k+b_0\}$.}
\end{remark}

Now, we can give the proofs of Theorems~\ref{thm0001}-\ref{thm0003}.\vspace{6pt}

\noindent\textbf{Proof of Theorem~\ref{thm0001}:}\quad By Lemma~\ref{lem0009}, $u_{\lambda,n}\rightharpoonup u_{\lambda, 0}$ weakly in $E_\lambda$ with $\lambda>\overline{\Lambda}_1$ as $n\to\infty$ up to a subsequence.  Without loss of generality, we may assume that $u_{\lambda,n}\rightharpoonup u_{\lambda, 0}$ weakly in $E_\lambda$ with $\lambda>\overline{\Lambda}_1$ as $n\to\infty$.  Since $\mathcal{E}_\lambda(u)$ is $C^1$, it is easy to see from the fact that $\{u_{\lambda, n}\}$ is a $(C)_{c_\lambda}$ sequence that $\mathcal{E}_\lambda'(u_{\lambda, 0})=0$ in $E_\lambda^*$ with $\lambda>\overline{\Lambda}_1$.  It remains to show that $u_{\lambda, 0}\not=0$ in $E_\lambda$ for $\lambda$ sufficiently large.  Indeed, if $u_{\lambda, 0}=0$, then by the conditions $(B_1)$--$(B_2)$ and $(F_1)$--$(F_2)$ and the the Sobolev, the H\"older, the Gagliardo-Nirenberg inequalities and the fact that $E_\lambda$ is embedded continuously into $\h$ for $\lambda>\max\{0, \frac{-b_0}{b_\infty}\}$ that
\begin{eqnarray*}
\mathcal{G}_\lambda(u_{\lambda,n},u_{\lambda,n})&\leq&\max\{-a_0, 0\}B_0^2\|\Delta u_{\lambda,n}\|_{L^2(\bbr^N)}\|u_{\lambda,n}\|_{L^2(\bbr^N)}+o_n(1)\notag\\
&\leq&\max\{-a_0, 0\}B_0^2\bigg((\frac{1}{\lambda b_\infty+b_0})^{\frac12}+o_n(1)\bigg)\|u_{\lambda,n}\|_\lambda^2+o_n(1)
\end{eqnarray*}
and
\begin{eqnarray*}
|\int_{\bbr^N}F(u_{\lambda,n})dx|&\leq&2l_0\|u_{\lambda,n}\|_{L^2(\bbr^N)}^2+C\|u_{\lambda,n}\|_{L^p(\bbr^N)}^p\notag\\
&\leq&2l_0\bigg((\frac{1}{\lambda b_\infty+b_0})^{\frac12}\|u_{\lambda,n}\|_\lambda^2+o_n(1)\bigg)+C\|\Delta u_{\lambda,n}\|_{L^2(\bbr^N)}^{\frac p2}\|u_{\lambda,n}\|_{L^2(\bbr^N)}^{\frac p2}\notag\\
&\leq&2l_0\bigg((\frac{1}{\lambda b_\infty+b_0})^{\frac12}\|u_{\lambda,n}\|_\lambda^2+o_n(1)\bigg)+C\bigg((\frac{1}{\lambda b_\infty+b_0})^{\frac p2}+o_n(1)\bigg)\|u_{\lambda,n}\|_\lambda^p,
\end{eqnarray*}
which, together with $\langle\mathcal{E}_\lambda(u_{\lambda, n}), u_{\lambda, n}\rangle_{E_{\lambda}^*, E_\lambda}=o_n(1)$ and Lemma~\ref{lem0009}, yields that there exists $\widehat{\Lambda}>\overline{\Lambda}_1$ such that $u_{\lambda, n}\to 0$ strongly in $E_\lambda$ with $\lambda>\widehat{\Lambda}$ as $n\to\infty$.  It follows that $c_\lambda=0$ for $\lambda>\widehat{\Lambda}$.  It is impossible since $c_\lambda\geq C>0$ for all $\lambda>\overline{\Lambda}_1$.
\qquad\raisebox{-0.5mm}{%
\rule{1.5mm}{4mm}}\vspace{6pt}

\noindent\textbf{Proof of Theorem~\ref{thm0002}:}\quad If we can show that $\{u_{\lambda, n}\}$ is uniformly bounded in $E_\lambda$ as Lemma~\ref{lem0009} in this case, that is, there exist $\widetilde{\Lambda}_2>\widetilde{\Lambda}_1$ and $C_0>0$ independent of $\lambda>\widetilde{\Lambda}_2$ such that $\|u_{\lambda, n}\|_\lambda\leq C_0+o_n(1)$ with $\lambda>\widetilde{\Lambda}_2$, then we can follow the proof of Theorem~\ref{thm0001} to obtain the conclusion.  In fact, by Lemma~\ref{lem0010}, there exists $C_\lambda>0$ such that $\|u_{\lambda, n}\|_\lambda\leq C_\lambda$ for all $n\in\bbn$ with $\lambda>\widetilde{\Lambda}_1$.  If $C_\lambda\to+\infty$ as $\lambda\to+\infty$ up to a subsequence, then there exists $\lambda_m\to+\infty$ as $m\to\infty$ and $n_m\in\bbn$ such that $\|u_{\lambda_m, n_m}\|_{\lambda_m}\to+\infty$ as $m\to\infty$.  Let $w_{m}=\frac{u_{\lambda_m, n_m}}{\|u_{\lambda_m, n_m}\|_{\lambda_m}}$, then without loss of generality, we may assume that $w_m\rightharpoonup w_0$ $\h$ as $m\to\infty$ for some $w_0\in\h$ due to \eqref{eq0020} and \eqref{eq0010}.  Now, by using similar arguments in the proof of Lemma~\ref{lem0010}, we can show that $w_0$ is a nontrivial weak solution of \eqref{eq0026}, which is inconsistent with the assumption that $l_\infty\not\in\sigma(\Delta^2-a_0\Delta+b_0, L^2(\Omega))$.
\qquad\raisebox{-0.5mm}{%
\rule{1.5mm}{4mm}}\vspace{6pt}

\noindent\textbf{Proof of Theorem~\ref{thm0003}:}\quad Suppose that $u_\lambda$ is the nontrivial solution of $(\mathcal{P}_\lambda)$ obtained by Theorem~\ref{thm0001} or Theorem~\ref{thm0002} with $\lambda$ large enough.  We can see from Lemma~\ref{lem0009} and the proof of Theorem~\ref{thm0002} that
$\|u_\lambda\|_\lambda\leq C_0$ for all $\lambda$ with some $C_0>0$ independent of $\lambda$.  Now,  similarly as in the proof of Lemma~\ref{lem0010}, we can show that $u_\lambda\to u_*$ strongly in $\h$ as $\lambda\to+\infty$ for some $u_*\in H$ with $u_*\equiv0$ outside $\Omega$,  up to a subsequence.  Furthermore, we also have $\lambda\int_{\bbr^N}b(x)u_\lambda^2dx=o_\lambda(1)$ and $\mathcal{F}'(u_*)=0$ in $H^*$, where $H^*$ is the dual space of $H$ and
\begin{equation*}
\mathcal{F}(u)=\frac12\int_{\Omega}\big(|\Delta u|^2+a_0|\nabla u|^2+b_0u^2\big)dx-\int_{\Omega}F(u)dx.
\end{equation*}
Note that $\mathcal{E}_\lambda(u_\lambda)=c_\lambda\geq C>0$ for all $\lambda>\max\{\overline{\Lambda}_1,\widetilde{\Lambda}_1\}$, we must have $u_*\not=0$ in $H$.  Thus, $u_*$ is a nontrivial weak solution of \eqref{eq0030}.
\qquad\raisebox{-0.5mm}{%
\rule{1.5mm}{4mm}}\vspace{6pt}
\section{Acknowledgements}
Y. Wu thanks Prof. T.-F. Wu for his friendship, encouragement and enlightening discussions.  Y. Wu is supported by the Fundamental Research Funds for the Central Universities (2014QNA67).


\end{document}